\pgfplotsset{
  log x ticks with fixed point/.style={
      xticklabel={
        \pgfkeys{/pgf/fpu=true}
        \pgfmathparse{exp(\tick)}%
        \pgfmathprintnumber[fixed relative, precision=3]{\pgfmathresult}
        \pgfkeys{/pgf/fpu=false}
      }
  },
  log y ticks with fixed point/.style={
      yticklabel={
        \pgfkeys{/pgf/fpu=true}
        \pgfmathparse{exp(\tick)}%
        \pgfmathprintnumber[fixed relative, precision=3]{\pgfmathresult}
        \pgfkeys{/pgf/fpu=false}
      }
  }
}
\providecommand{\keywords}[1]
{
  \small	
  \textbf{\textit{Keywords---}} #1
}
\newtheorem{definition}{Definition}
\theoremstyle{remark}
\newtheorem{remark}{Remark}
\theoremstyle{theorem}
\theoremstyle{theorem}
\newtheorem{lemma}{Lemma}
\newtheorem{assumption}{Assumption}
\renewcommand{\o}{\Omega}
\newcommand{\p}{\textbf}
\newcommand{\f}{\boldsymbol}
\begin{document}
	
	\title{Scaled boundary isogeometric analysis with $C^1$ coupling for Kirchhoff plate theory}
 \author{Jeremias Arf$^1$\footnote{Corresponding author: arf@rhrk.uni-kl.de \qquad \qquad \qquad \qquad \qquad \qquad Preprint submitted to CMAME}, Mathias Reichle$^2$, Sven Klinkel$^2$ and Bernd Simeon$^1$ }
 \date{ $^1$ Departement of Mathematics, RPTU Kaiserslautern-Landau, Germany\\
    $^2$ Chair of Structural Analysis and Dynamics, RWTH Aachen, Germany }
	\maketitle
	
  \begin{abstract}
      Although isogeometric analysis exploits smooth B-spline and NURBS basis functions  for the definition of discrete function spaces as well as for the geometry representation, the global smoothness in so-called multipatch parametrizations is an issue. 
      Especially, if strong $C^1$ regularity is required, the  introduction of function spaces with good convergence properties is not straightforward. However, in $2D$ there is the special class  of analysis-suitable $G^1$ (AS-$G^1$) parametrizations that are suitable for patch coupling.
      In this contribution we  show that the concept of scaled boundary isogeometric analysis fits to the AS-$G^1$ idea and the former is appropriate to define $C^1$-smooth basis functions.  The proposed method is applied to   Kirchhoff plates and its capability is demonstrated utilizing several numerical examples. Its applicability to non-trivial and trimmed shapes is demonstrated. 
  \end{abstract}
 \keywords{Isogeometric analysis, Analysis-suitable $G^1$ parametrization, Scaled boundary method, Kirchhoff plate theory}

\section{Introduction}
	
Isogeometric Analysis (IGA) is a concept  which was introduced by Hughes et al.  \cite{IGA2} and developed to a widely used and very successful approach within numerical analysis and computational mathematics. 
It connects the fields of geometry representation in Computer Aided Design (CAD) with the framework of Finite Elements (FE). For both, geometry parametrization and  discretization space definition, one utilizes B-splines and NURBS (Non-Uniform Rational B-Splines).
In IGA we find  different built-in features that allow for interesting applications. 
For example, at least in the so-called single patch case, smoothness of test and ansatz functions is no issue anymore since the underlying B-spline and NURBS basis functions can be chosen with high global regularity. This is an advantage compared to classical FE ansatz spaces which are mostly  continuous or discontinuous mappings. Clearly, also $C^1$-smooth FE spaces can be defined without the ideas of IGA. But, for this regularity requirement and in  particular if we look for $C^k$ basis functions where $k>1$, the definitions become complicated.  Whereas utilizing splines, a regularity increase can be implemented efficiently. This comes along with the $k$-refinement ansatz. The latter means a simultaneous degree  and regularity  elevation step. Such a change of the discretization spaces has no pendant in standard FE theory.
The possibility to vary regularities in IGA implies two important aspects. On the one hand, IGA is a suitable discretization approach if one deals with high-order problems. On the other hand,  increasing the regularity without changing the degree leads to a significant reduction of degrees of freedom although the approximation behaviour changes only slightly. Especially, as long as  $p>r$, where $p$ stands for the B-spline degree and $r$ for regularity,  the observed convergence rates   are basically the same. For more information on IGA we recommend \cite{IGA1,IGA2,IGA3}. \\
Unfortunately, if one considers complex geometries,  multipatch IGA meshes are necessary, i.e. the computational domain is decomposed in several subdomains that are associated to isogeometric spaces.  These patch-wise spaces have to be coupled in order to get high regular basis functions on the whole domain. And this coupling is  in general not trivial. Basically, when coupling the patches one faces similar problems like for the coupling of classical FE mesh elements. This means,  the smoother the basis functions across patch interfaces, the harder the coupling procedure gets. 
Even worse, for different  geometries   locking effects arise, i.e. bad approximation results, if  we want strong global $C^r$-smoothness with $r \geq 1$; see \cite{Collin2016AnalysissuitableGM}. \\
Consequently, in literature one can find the concept of weak patch-coupling to define proper multipatch IGA spaces; see \cite{Dittmann2019,Dornisch2015,Chasapi2020}. 
But in $2D$, if one restricts oneself to a special class of parametrizations, the so-called 
\emph{analysis-suitable $G^1$ parametrizations}, the strong $C^1$ coupling of patches is  possible; cf. \cite{Collin2016AnalysissuitableGM}. Besides, for the latter geometries the problem of $C^1$-locking can be avoided. 
Further we note the framework of \emph{polar splines}, see e.g. \cite{toshniwal2017}, that are useful to handle  disk like domains  and to define globally smooth basis functions. \\
In this article, we combine the approach and the results from the mentioned reference \cite{Collin2016AnalysissuitableGM} to show that  scaled boundary IGA (SB-IGA) is suitable to introduce $C^1$-smooth basis functions. In SB-IGA we assume the domain to be defined by a scaling of its boundary w.r.t. to some given scaling center; see \cite{Arioli2019,Klinkel2019,Chasapi2018}. At first glance, this requires star-convex domains and introduces a singular parametrization mapping. Nevertheless, it fits to the fact that in CAD the computational domains are often represented via its boundaries and furthermore we explain in the subsequent parts why it is still useful for non-star-convex geometries.  To demonstrate the $C^1$ regularity and capabilities we apply our approach to the problem class of Kirchhoff plates. The related strong PDE formulation is of fourth order and hence in the classical weak formulation second derivatives appear ; see \cite{liu2018,Dieringer2011}.  Furthermore, we briefly explain, why the proposed scaled boundary meshes (SB-meshes) are convenient when dealing with trimming. Latter domain modification is often applied in IGA and central if topologically complicated domains appear. We refer to \cite{Marussig2018} for details regarding trimming.  More precisely, the structure of the proposal is the following.\\
We start with a very brief discussion of B-splines and SB-IGA in Sec. \ref{subsection:B-splines_and_SB-IGA} to clarify the mathematical notions. We proceed with two sections, Sec. \ref{section:two-patch coupling} and Sec. \ref{sec:Planar SB-IGA}, dedicated to the coupling problem. We discuss the standard two-patch case as introduced in \cite{Collin2016AnalysissuitableGM} and in a second section we show the application to  SB-meshes. Afterwards, in Sec. \ref{section:generelizations}, we briefly explain generalization possibilities, where we emphasize the application to trimmed geometries. In Sec. \ref{section:numerical_examples} we look at several numerical examples in the context of Kirchhoff plate theory. In  Sec. \ref{Sec:Stability} we discuss the problem of stability coming along with the singular parametrizations.
We close with a short conclusion in  Sec. \ref{section:conclusion}.

\section{B-splines and SB-IGA}
\label{subsection:B-splines_and_SB-IGA}
In this section we want to introduce briefly some basic notion and clarify the SB-IGA ansatz.\\
First, we state a short overview of B-spline functions, B-spline spaces, respectively.\\
Exploiting \cite{IGA1,IGA3} for a short
exposition, we call an  increasing sequence of real numbers $\Xi \coloneqq \{ \xi_1 \leq  \xi_2  \leq \dots \leq \xi_{n+p+1}  \}$ for some $p \in \mathbb{N}$   \emph{knot vector}, where we assume  $0=\xi_1=\dots=\xi_{p+1}, \ \xi_{n+1}=\dots=\xi_{n+p+1}=1$, and call such knot vectors $p$-open. 
Further the multiplicity of the $j$-th knot is denoted by $m(\xi_j)$.
Then  the univariate B-spline functions $\widehat{B}_{j,p}(\cdot)$ of degree $p$ corresponding to a given knot vector $\Xi$ is defined recursively by the \emph{Cox-DeBoor formula}:
\begin{align}
\widehat{B}_{j,0}(\zeta) \coloneqq \begin{cases}
1, \ \ \textup{if}  \ \zeta \in [\xi_{j},\xi_{j+1}), \\
0, \ \ \textup{else},
\end{cases}
\end{align}
\textup{and if }  $p \in \mathbb{N}_{\geq 1} \ \textup{we set}$ 
\begin{align}
\widehat{B}_{j,p}(\zeta)\coloneqq \frac{\zeta-\xi_{j}}{\xi_{j+p}-\xi_j} \widehat{B}_{j,p-1}(\zeta)  +\frac{\xi_{j+p+1}-\zeta}{\xi_{j+p+1}-\xi_{j+1}} \widehat{B}_{j+1,p-1}(\zeta),
\end{align}
where one puts $0/0=0$ to obtain  well-definedness. The knot vector $\Xi$ without knot repetitions is denoted by $\{ \psi_1, \dots , \psi_N \}$. \\
The multivariate extension of the last spline definition is achieved by a tensor product construction. In other words, we set for a given tensor knot vector   $\boldsymbol{\Xi} \coloneqq \Xi_1 \times   \dots \times \Xi_d $, where the $\Xi_{l}=\{ \xi_1^{l}, \dots , \xi_{n_l+p_l+1}^{l} \}, \ l=1, \dots , d$ are $p_l$-open,   and a given \emph{degree vector}   $\f{p} \coloneqq (p_1, \dots , p_d)$ for the multivariate case
\begin{align}
\widehat{B}_{\p{i},\f{p}}(\boldsymbol{\zeta}) \coloneqq \prod_{l=1}^{d} \widehat{B}_{i_l,p_l}(\zeta_l), \ \ \ \ \forall \, \p{i} \in \mathit{\mathbf{I}}, \ \  \boldsymbol{\zeta} \coloneqq (\zeta_1, \dots , \zeta_d),
\end{align}
with  $d$ as  the underlying dimension of the parametric domain $\widehat{\Omega}= (0,1)^d$ and $\textup{\p{I}}$ the multi-index set $\textup{\p{I}} \coloneqq \{ (i_1,\dots,i_d) \  | \  1\leq i_l \leq n_l, \ l=1,\dots,d  \}$.\\
B-splines  fulfill several properties and for our purposes the most important ones are:
\begin{itemize}
	\item If  for all internal knots the multiplicity satisfies $1 \leq m(\xi_j) \leq m \leq p , $ then the B-spline basis functions $\widehat{B}_{i,p}$ are globally $C^{p-m}$-continuous. Therefore we define  in this case the regularity integer $r \coloneqq p-m$. Obviously, by the product structure, we get splines $\widehat{B}_{\p{i},\f{p}}$ which are $C^{r_l}$-smooth  w.r.t. the $l$-th coordinate direction if the internal multiplicities fulfill $1 \leq m(\xi_j^l) \leq m_l  \leq p_l, \ r_l \coloneqq p_l-m_l, \ \forall l \in 1, \dots , d$ in the multivariate case.   We write in the following $\f{r} \coloneqq (r_1,\dots ,r_d), $ for the regularity vector to indicate the smoothness. In case of $r_i <0$ we have discontinuous splines w.r.t. the $i$-th coordinate direction. To later to emphasize the regularity of the splines we introduce a upper index $r$ and write in the following $\widehat{B}_{{i},{p}}^{{r}}, \ \widehat{B}_{\p{i},\f{p}}^{\f{r}}$, respectively.
	\item For univariate splines $\widehat{B}_{i,p}^r, \, p \geq 1, \, r \geq 0$ we have
	\begin{align}
	\label{eq:soline_der}
	\partial_{\zeta} \widehat{B}_{i,p}^r(\zeta) = \frac{p}{\xi_{i+p}-\xi_i}\widehat{B}_{i,p-1}^{r-1}(\zeta) +  \frac{p}{\xi_{i+p+1}-\xi_{i+1}}\widehat{B}_{i+1,p-1}^{r-1}(\zeta),
	\end{align} 
	with $\widehat{B}_{1,p-1}^{r-1}(\zeta)\coloneqq \widehat{B}_{n+1,p-1}^{r-1}(\zeta) \coloneqq 0$.
	\item  The support of the spline $\widehat{B}_{i,p}^r $ is contained in the interval $[\xi_i,\xi_{i+p+1}]$.
 \item We have the partition of unity property $\sum_i \widehat{B}_{i,p}^r=1$.
\end{itemize}
The space spanned by all univariate splines $\widehat{B}_{i,p}^r$ corresponding to  given knot vector and degree $p$ and global regularity $r$  is denoted by $$S_p^r \coloneqq \textup{span}\{ \widehat{B}_{i,p}^r \ | \ i = 1,\dots , n \}.$$ 
For the multivariate case one can define the spline space as the product space $$S_{p_1, \dots , p_d}^{r_1,\dots,r_d} \coloneqq S_{p_1}^{r_1} \otimes \dots \otimes S_{p_d}^{r_d} = \textup{span} \{\widehat{B}_{\p{i},\f{p}}^{\f{r}} \ | \  \p{i} \in \mathit{\mathbf{I}}  \}$$ of proper univariate spline spaces. To obtain more flexibility it could be useful to introduce a strictly positive weight function $W = \sum_{\p{i}} w_{\p{i}} \, \widehat{B}_{\p{i},\f{p}}^{\f{r}} \in S^{r_1,\dots,r_d}_{p_1,\dots,p_d}$ and use  NURBS functions  $\widehat{N}_{\p{i},\f{p}}^{\f{r}} \coloneqq \frac{w_{\p{i}} \, \widehat{B}_{\p{i},\f{p}}^{\f{r}}}{W}$, the NURBS spaces ${N}_{p_1,\dots,p_d}^{r_1,\dots,r_d} \coloneqq \frac{1}{W}  S_{p_1, \dots , p_d}^{r_1,\dots,r_d}, $  respectively. In particular, NURBS can be seen as  generalized B-splines.   Later we use the abbreviations  $\widehat{N}_{{i},{p}}^{{r}} \coloneqq \frac{\widehat{B}_{{i},{p}}^{{r}}}{W}$ and ${N}_{{p}}^{{r}} \coloneqq \textup{span}\{\widehat{N}_{{i},{p}}^{{r}} \ | \ i=1,\dots  \}$ for the $1D$ case.\\

To define discrete spaces based on splines we require a  mapping $\p{F} \colon \widehat{\Omega} \coloneqq (0,1)^d \rightarrow \mathbb{R}^d$ which parametrizes the computational domain.
The knots stored in the knot vector $  \boldsymbol{\Xi} $, corresponding to  the underlying  splines, determine a mesh in the parametric domain $\widehat{\Omega} $, namely  $\widehat{M} \coloneqq \{ K_{\p{j}}\coloneqq (\psi_{j_1}^1,\psi_{j_1+1}^1 ) \times \dots \times (\psi_{j_{d}}^{d},\psi_{j_{d}+1}^{d} ) \ | \  \p{j}=(j_1,\dots,j_{d}), \ \textup{with} \ 1 \leq j_i <N_i\},$ and
with ${\boldsymbol{\Psi}}= \{\psi_1^1, \dots, \psi _{N_1}^1\}  \times \dots \times \{\psi_1^{d}, \dots, \psi _{N_{d}}^{d}\}$  \  \textup{as  the knot vector} \ ${\boldsymbol{\Xi}}$ \ 
\textup{without knot repetitions}.   
The image of this mesh under the mapping $\p{F}$, i.e. $\mathcal{M} \coloneqq \{{\p{F}}(K) \ | \ K \in \widehat{M} \}$, gives us a mesh structure in the physical domain. By inserting knots without changing the parametrization  we can refine the mesh, which is the concept of $h$-refinement; see \cite{IGA2,IGA1}.
For a mesh $\mathcal{M}$ we can introduce the global mesh size through $h \coloneqq \max\{h_{{K}} \ | \ {K} \in \widehat{M} \}$, where for ${K} \in \widehat{M}$ we denote with $h_{{K}} \coloneqq \textup{diam}({K}) $ the \emph{element size} and $\widehat{M}$ is the underlying parametric mesh.

\vspace{0.2cm}

The underlying idea of SB-IGA takes into account that in CAD applications the computational   domain is often represented by means of its boundary. As long as the region of interest is star-shaped we can choose a scaling center $\f{x}_0 \in \mathbb{R}^d$ and the domain is then defined by a scaling of the boundary w.r.t. to $\f{x}_0$. In the planar case, which is the one we focus on here, and in view of the isogeometric analysis  we have some  boundary NURBS curve $\gamma(\zeta) = \textstyle\sum_{i}\p{C}_i \ \widehat{N}_{i,p}^{r}(\zeta), \ \p{C}_i \in \mathbb{R}^2 $ and define the SB-parametrization of $\o$ through
\begin{equation*}
\p{F} \colon \widehat{\o} \coloneqq (0,1)^2  \rightarrow \o \ , \ (\zeta,\xi) \mapsto \xi \ \big( \gamma(\zeta)-\f{x}_0\big)+\f{x}_0 \ \  \  \textup{(see Fig. \ref{Fig:SB_illustration_1})}.
\end{equation*}

\begin{figure}
	\centering
	\begin{tikzpicture}
		\draw [fill=gray, opacity=0.2]  plot[smooth,dashed] coordinates {(5,0) (6,-0.5)  (8,0.4) (6,2.3) (4.1,1.5)  (5,0) };
	\draw[->, thick] (0,0) to (2.6,0);
	\draw[->, thick] (0,0) to (0,2.6);
	\draw[fill=gray, opacity=0.2] (0,0) rectangle (2,2);
	\draw[red, very thick] (0,0) --(2,0); 
	
	\draw[blue, very thick] (0,2) --(2,2);
	\draw[green, very thick] (0,0) --(0,2);  
	
	\draw[green, very thick] (2,0) --(2,2); 
	
		\draw [very thick, blue]  plot[smooth,dashed] coordinates {(5,0) (6,-0.5)  (8,0.4) (6,2.3) (4.1,1.5)  (5,0) };
			\draw[green, very thick] (5.9,0.8)--(5,0);
		\draw[fill=red] (5.9,0.8) circle(2pt);
		
	\draw[->, very thick, blue] (0.98,2) to (1.02,2); 
	\draw[->, very thick, red] (0.98,0) to (1.02,0); 
	
	\draw[->, very thick, green] (2,0.98) to (2,1.02); 
	
	\draw[->, very thick, green] (0,0.98) to (0,1.02); 
	
	\draw[->, very thick, green] (2,0.98) to (2,1.02); 
	
	\draw[->, very thick, green] (5.495,0.44) to (5.405,0.36); 
	
	\draw[->, very thick, blue] (5,1.98+0.15) to (5.1,2.02+0.15); 
	\draw[->, very thick, blue] (6.8,1.4-1.5-0.2) to (6.7,1.3-1.435-0.2); 
	
	\node[below] at (6.1,0.75) {$\f{x}_0$};
	\node[below] at (2.35,-0.05) {$\zeta$};
	\node[left] at (-0.05,2.35) {$\xi$};
	
	\node[below] at (0,-0.08) {$0$};
		\node[left] at (-0.08,0) {$0$};
		\node[left] at (-0.08,2) {$1$};
		\node[below] at (2,-0.08) {$1$};
		\draw (-0.11,0) -- (0,0);
			\draw (0,-0.11) -- (0,0);
		\draw (-0.11,2) -- (0,2);
			\draw (2,-0.11) -- (2,0);
			\draw[->,thick, out=40,in=140] (2.5,2.5) to (4,2.5);
			
			\node[below] at (3.25,2.6) {$\p{F}$};
			\node[above, blue, thick] at (7.2,1.4) {\large $\gamma$};
			
			\node at (1,1) {\large $\widehat{\Omega}$};
				\node at (7.4,0.55) {\large ${\Omega}$};
				
				\draw[->,thick] (9,0) --(10,0);
				\draw[->,thick] (9,0) --(9,1);
				\node at (9.8,-0.2) {$x$};
				\node at (8.75,0.8) {$y$};
	\end{tikzpicture}
	\caption{ \small Within SB-IGA a boundary description is used. However, a suitable scaling center is required. }
 \label{Fig:SB_illustration_1}
\end{figure}
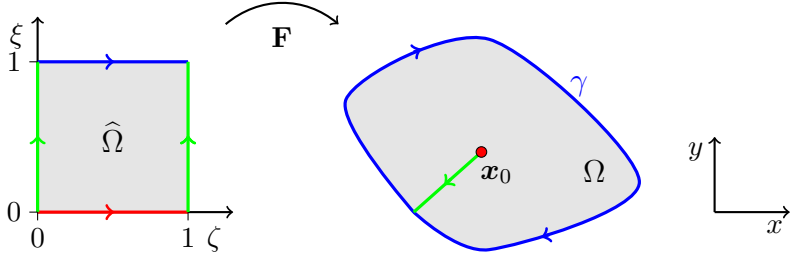

Depending on the situation it might be useful to allow for more flexibility and we replace in this article the prefactor $\xi$ by a degree $1$ polynomial  of the form $ q(\xi) \coloneqq c_1 \xi + c_2, \ c_1 >0, \ c_2\geq 0 $, i.e.  \begin{equation} \label{eq:SB-IGA_param}
    \p{F}(\zeta,\xi) = q(\xi)  \big( \gamma(\zeta)-\f{x}_0\big)+\f{x}_0.
\end{equation} By the linearity w.r.t. the second parameter $\xi$  we can assume  for $\o \subset \mathbb{R}^2$ that  $\p{F} \in \big[ {N}_p^r \otimes S_{p}^r  \big]^2$.  In particular, the weight function depends only on $\zeta$.

  \begin{figure}[H]	
  \centering
	\begin{tikzpicture}[scale=1.2]    
	    \node (eins) at (0.3,0.7) {\includegraphics[width=0.42\linewidth]{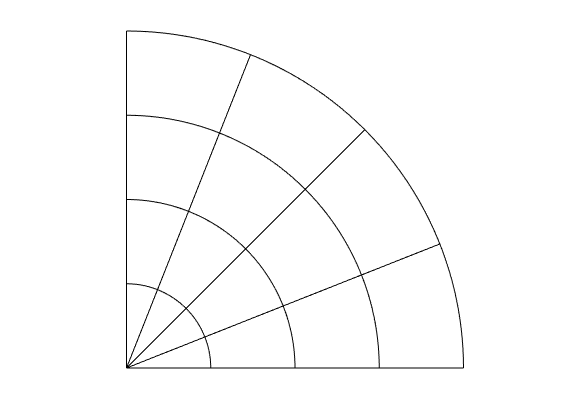}};
         \node (zwei) at (-0.45-0.3+6.6,0.7) {\includegraphics[width=0.42\linewidth]{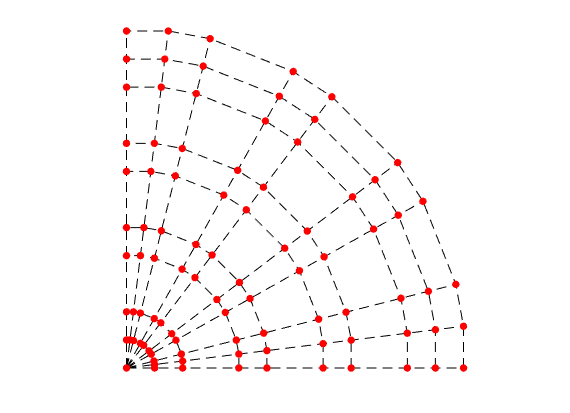}};
         \node[right] at (-1.8,-1.35) {\footnotesize (a) SB mesh};
          \node[right] at (-1.8,-1.95) {\footnotesize (c) NURBS w.r.t. first coordinate};
               \node[right] at (-1.5+5.2,-1.35) {\footnotesize (b) Control points net};
          \node[right] at (-1.5+5.2,-1.95) {\footnotesize (d) B-splines w.r.t. second coordinate};
        \node (eins) at (0.3,-4) {\includegraphics[width=0.42\linewidth]{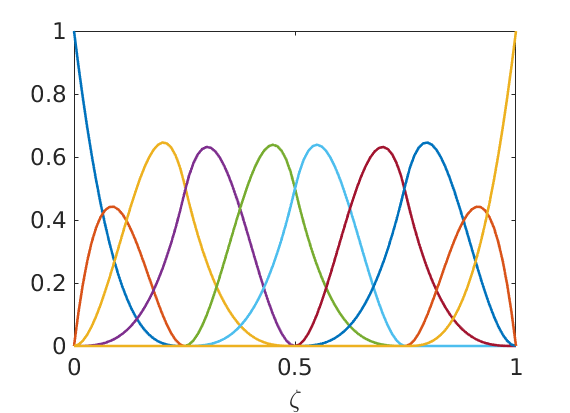}};
         \node (eins) at (0.3-0.45+6,-4) {\includegraphics[width=0.42\linewidth]{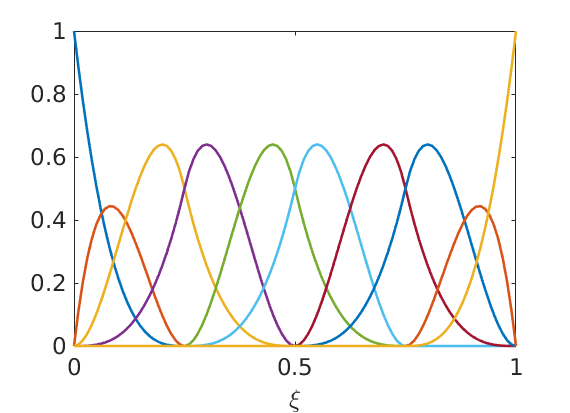}};
         \node[blue] at (-1.3,-2.7) {\footnotesize $\widehat{N}_{1,3}^1$}; 
          \node[orange!50!red!70!black] at (-1.2,-3.3) {\footnotesize $\widehat{N}_{2,3}^1$};
          \draw[->] (-1.3,-3.5) -- (-1.3,-3.9); 
          \node[orange!60!yellow] at (2,-2.7) {\footnotesize $\widehat{N}_{n_1,3}^1$}; 
          \node at (-0.5,-2.7) { $\dots$}; 

           \node[blue] at (-1.3+5.55,-2.7) {\footnotesize $\widehat{B}_{1,3}^1$}; 
          \node[orange!50!red!70!black] at (-1.2+5.55,-3.3) {\footnotesize $\widehat{B}_{2,3}^1$};
          \draw[->] (-1.3+5.55,-3.5) -- (-1.3+5.55,-3.9); 
          \node[orange!60!yellow] at (2+5.55,-2.7) {\footnotesize $\widehat{B}_{n_2,3}^1$}; 
          \node at (-0.5+5.55,-2.7) { $\dots$}; 
          \draw[->,shift={(-1.158,-1)}] (0,0.1) -- (0,3.6);
          \draw[->,shift={(-1.158,-0.8)}] (-0.1,0) -- (3.6,0);
          \node at (-1.7,-0.8) {\small $(0,0)$};
            \node at (-1.35,2.65) {\small $y$};
             \node at (2.3,-0.95) {\small $x$};
             \node at (1.95,-1.03) {\small $1$};
               \node at (-1.35,2.3) {\small $1$};
	\end{tikzpicture}
\caption{\small The mesh and corresponding control net for a simple SB parametrization. Here we have $p=3, r=1$ for the underlying NURBS definition. One notes the usage of B-splines without weight function for the scaling direction $\xi$. }\label{Fig:Controls}
\end{figure}
  In other words, there are so-called control points $\p{C}_{i,j} \in  \mathbb{R}^2$ associated to the NURBS $(\zeta,\xi) \mapsto \widehat{N}_{i,p}^r(\zeta) \widehat{B}_{j,p}^r(\xi) \in N_p^r \otimes S_p^r$ which define $\p{F}$, namely $$\p{F}(\zeta,\xi) = \sum_{i=1}^{n_1} \sum_{j=1}^{n_2}  \p{C}_{i,j} \  \widehat{N}_{i,p}^r(\zeta) \widehat{B}_{j,p}^r(\xi) . $$ For reasons of simplification we  suppose equal degree and regularity w.r.t. each coordinate direction. Due to the SB ansatz we obtain in the physical domain $\o$ layers of control points and it is $\p{C}_{1,1}=\p{C}_{2,1}= \dots = \p{C}_{n_1,1}$; cf. Fig. \ref{Fig:Controls}.   
  \begin{remark}
  The structure of the domain and the control points in Fig. \ref{Fig:Controls} are reminiscent of the mentioned \emph{polar splines} framework; see \cite{toshniwal2017}.
      In fact, the idea of degenerating an edge of the standard parametric domains is analogous to the SB-IGA ansatz followed in this article. In some sense, one can interpret the polar spline approach as a special case of a scaled boundary representation. But we emphasize that there are also several differences between the mentioned polar spline approach and the content of this publication. First of all, we do not work with periodic spline functions and the computational domain boundary domain can be  a non-smooth curve. Secondly, our  treatment of the scaling center basis functions differs, namely, we use always the original B-spline functions from the non-degenerate parametric domain, whereas in \cite{toshniwal2017} triangular Bernstein polynomials are considered. In particular, we do not construct polar spline basis functions. Besides, in the subsequent parts we look at the coupling of different SB-parametrizations as well as at the coupling of different star-shaped blocks. This   leads in general  to  geometries that can not be treated directly with polar splines. 
  \end{remark}
   The isogeometric test functions, starting point for discretization methods,  are then defined as the push-forwards of the NURBS, namely $$\mathcal{V}_h= \mathcal{V}_h(r,p) \coloneqq \{ \phi \ | \ {\phi} \circ \p{F}  \in N_{p}^r \otimes {S}_p^r  \}.$$
 If the domain boundary $\partial\o$ is composed of different curves $\gamma^{(m)}$, one defines  parametrizations for each curve as written above and we are in the field of multipatch geometries; cf. Fig. \ref{Fig:SB_illustration_2}. To be more precise, for a $n$-patch geometry we have 	
\begin{align}
\bigcup_{m=1,\dots,n} \overline{\o_{m}} = \overline{\Omega}, \ \ \o_k \cap \o_l = \emptyset \ \textup{if} \ k \neq l , \  \ \ \p{F}_m \colon  \widehat{\Omega} \rightarrow \o_m, \ \p{F}_m  \in \big[N_{p}^r \otimes {S}_p^r \big]^2 \  
\end{align} and $\p{F}_m$ is defined analogous to  \eqref{eq:SB-IGA_param}. Unfortunately,  especially if the curves meet not $G^1$ but high regularity of the corresponding IGA test functions is required, then the coupling gets involved. IGA spaces in the multipatch framework are straight-forwardly defined as $$\mathcal{V}^M_{h} \coloneqq \{ \phi  \colon \o \rightarrow \mathbb{R}\ | \phi_{|\o_m} \in \mathcal{V}_h^{(m)}, \ \forall m \},$$  where $\mathcal{V}_h^{(m)}$ denotes the IGA space  corresponding to the $m$-th patch, to $\p{F}_m$, respectively. For all the patch coupling considerations we suppose the next assumption.
\begin{assumption}[Regular patch coupling] \color{white}{sdakjhjkh} 
\label{Assumption:coupling}\color{black}
    \begin{itemize}
        \item In each patch we use  NURBS and B-splines with the same degree $p$ and regularity $r$.
        \item The control points at interfaces match, meaning the control points of meeting patches coincide at the respective interface.
    \end{itemize}
\end{assumption}
Thus it is justified to write for the set of parametric basis functions in the $m$-th patch $$\{ \widehat{N}_{i,p}^r \cdot \widehat{B}_{j,p}^r \ | \ 1 \leq i \leq n_1^{(m)}, \ 1 \leq j \leq n_2\},$$ for proper $n^{(m)}_1, \ n_2 \in \mathbb{N}_{>1}.$\\
The main part  of the article is dedicated to the $C^1$ coupling of such SB-IGA patches, i.e. face spaces of the form
\begin{equation}
\mathcal{V}_h^{M,1} \coloneqq \mathcal{V}_h^{M} \cap C^1(\overline{\o}),
\end{equation} 
where the singularity of the $\p{F}_i$ at $\f{x}_0$ for $c_2=0$ requires a special attention.

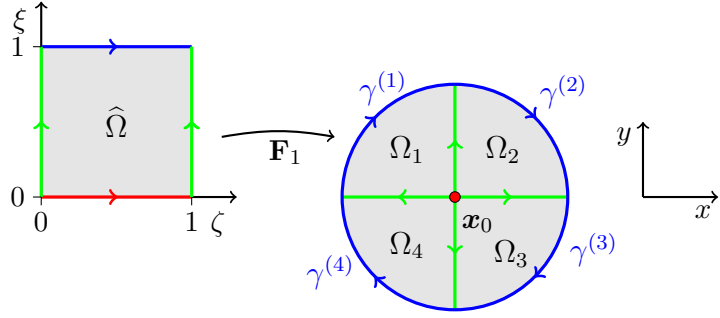
\begin{figure}
	\centering
	\begin{tikzpicture}
	\filldraw[fill opacity=0.2,fill=gray] (-1.5,0) -- (0,0) arc (0:90:1.5) -- cycle;

\filldraw[fill opacity=0.2,fill=gray] (-1.5,0) -- (-1.5,1.5) arc (90:180:1.5) -- cycle;

\filldraw[fill opacity=0.2,fill=gray] (-1.5,0) -- (-3,0) arc (180:270:1.5) -- cycle;

\filldraw[fill opacity=0.2,fill=gray] (-1.5,0) -- (-1.5,-1.5) arc (270:360:1.5) -- cycle;

	\draw[very thick, green] (-1.5,0) -- (-3,0);
	\draw[very thick, green] (-1.5,0) -- (-1.5,1.5); 
	\draw[very thick, green] (-1.5,0) -- (-1.5,-1.5); 
	\draw[very thick, green] (-1.5,0) -- (0,0); 
	
	\draw[very thick, green,<-] (-0.74,0) -- (-0.76,0);
	\draw[very thick, green,->] (-1.5,0.74) -- (-1.5,0.76); 
	\draw[very thick, green,->] (-1.5,-0.74) -- (-1.5,-0.76); 
	\draw[very thick, green, <-] (-0.76-1.5,0) -- (-0.74-1.5,0);

	\draw[very thick, blue] (0,0) arc (0:90:1.5) ;
	\draw[very thick, blue] (-1.5,1.5) arc (90:180:1.5) ;
	\draw[very thick, blue] (-3,0) arc (180:270:1.5) ;
	\draw[very thick, blue] (-1.5,-1.5) arc (270:360:1.5) ;
 
	\draw[fill=red] (-1.5,0) circle (2pt);

  	\draw[->, thick] (0-7,0) to (2.6-7,0);
  \draw[->, thick] (0-7,0) to (0-7,2.6);
  	\draw[fill=gray, opacity=0.2] (0-7,0) rectangle (2-7,2);
  \draw[red, very thick] (0-7,0) --(2-7,0); 
  
  \draw[blue, very thick] (0-7,2) --(-7+2,2);
  \draw[green, very thick] (0-7,0) --(0-7,2);  
  
  \draw[green, very thick] (2-7,0) --(2-7,2);

  	\node[below] at (6.1-7.3,-0.1) {$\f{x}_0$};
  \node[below] at (2.35-7,-0.05) {$\zeta$};
  \node[left] at (-0.05-7,2.35) {$\xi$};
  
  \node[below] at (0-7,-0.08) {$0$};
  \node[left] at (-0.08-7,0) {$0$};
  \node[left] at (-0.08-7,2) {$1$};
  \node[below] at (2-7,-0.08) {$1$};
  \draw (-0.11-7,0) -- (0-7,0);
  \draw (0-7,-0.11) -- (0-7,0);
  \draw (-0.11-7,2) -- (0-7,2);
  \draw (2-7,-0.11) -- (2-7,0);

  \draw[->, very thick, blue] (0.98-7,2) to (1.02-7,2); 
  \draw[->, very thick, red] (0.98-7,0) to (1.02-7,0); 
  
  \draw[->, very thick, green] (2-7,0.98) to (2-7,1.02); 
  
  \draw[->, very thick, green] (0-7,0.98) to (0-7,1.02); 
  
  \draw[->, very thick, green] (2-7,0.98) to (2-7,1.02); 
  	\node at (1-7,1) {\large $\widehat{\Omega}$};
  	
  		\node at (-1.5-0.85*0.75,0.85*0.75) {\large ${\Omega}_1$};

	\node at (-1.5+0.85*0.75,0.85*0.75) {\large ${\Omega}_2$};
	
		\node at (-1.5+1*0.75,-1*0.75) {\large ${\Omega}_3$};
		
		\node at (-1.5-0.85*0.75,-0.85*0.75) {\large ${\Omega}_4$};

		\draw[->, very thick, blue] (-2.25-0.31-0.01,0.75+0.31-0.01) to (-2.25-0.31+0.01,0.75+0.31+0.01);

			\draw[->, very thick, blue] (-2.25-0.31-0.01+2.13,0.75+0.31+0.01) to (-2.25-0.31+0.01+2.13,0.75+0.31-0.01);

			\draw[->, very thick, blue] (-2.25-0.31+0.01,0.75+0.31-0.01-2.13) to (-2.25-0.31-0.01,0.75+0.31+0.01-2.13); 
			
				\draw[->, very thick, blue] (-2.25-0.31+0.01+2.13,0.75+0.31+0.01-2.12) to (-2.25-0.31-0.01+2.13,0.75+0.31-0.01-2.12); 
				
	\node[left, blue] at (-2,1.45) {\large $\gamma^{(1)}$};
	\node[left, blue] at (0.4,1.4) {\large $\gamma^{(2)}$};
	\node[left, blue] at (0.8,-0.7) {\large $\gamma^{(3)}$};
	\node[left, blue] at (-2.7,-1) {\large $\gamma^{(4)}$};
	
		\draw[->,thick, out=10,in=170] (2.5-7.1,2.5-1.7) to (4-7.1,2.5-1.7);
		
		\node[below] at (3.25-7,2.6-1.7) {$\p{F}_1$};

  \draw[->,thick,shift={(-8,0)}] (9,0) --(10,0);
 \draw[->,thick,shift={(-8,0)}] (9,0) --(9,1);
\node[shift={(-8,0)}] at (9.8,-0.2) {$x$};
\node[shift={(-8,0)}] at (8.75,0.8) {$y$};
	\end{tikzpicture}
	\caption{ \small The boundary can be determined by the concatenation of several curves. In this situation the patch-wise defined discrete function spaces have to be coupled in order to obtain the wanted global smoothness.}
 \label{Fig:SB_illustration_2}
\end{figure}

\section{Classical planar two-patch coupling}
\label{section:two-patch coupling}
For reasons of simplification, we restrict ourselves until Sect. \ref{section:generelizations} to the situation of two-patch geometries. The aspects of coupling can be straightforwardly generalized to  three or more patches. 
First, before we come back to SB-IGA we	turn towards  the simple case of a classical two-patch parametrization, meaning there are no singular points for the $\p{F}_m$. For such a situation we explain the needed conditions for a $C^1$ coupling and exploit the notion of analysis-suitable $G^1$ planar parametrizations. Here we mainly focus on the results and the framework of \cite{Collin2016AnalysissuitableGM}.\\
The setting is now the following. Assume that we have two mappings $\p{F}^{(S)},$ $ \ S \in \{L,R \}$ corresponding to the left and right side of a two-patch situation as displayed in Fig. \ref{Fig:1}.

	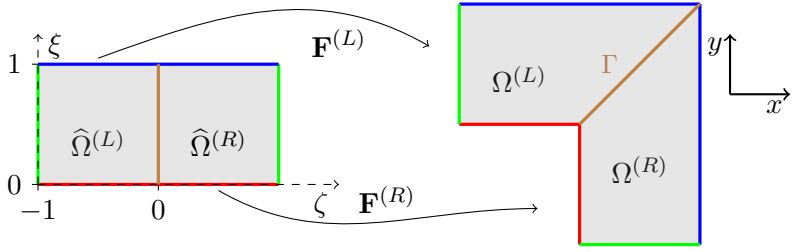
\begin{figure}[h!]
	\centering
		\begin{tikzpicture}[scale = 0.8]
		\draw[fill = gray , opacity = 0.2] (-5,-1.5) -- (-3,-1.5) -- (-3,0.5) -- (-5,0.5) -- (-5,-1.5);
		\draw[very thick,red] (-5,-1.5) -- (-3,-1.5); 
		\draw[very thick,green] (-3,-1.5) -- (-3,0.5); 
		
		\draw[very thick,blue] (-3,0.5) -- (-5,0.5);			
		\draw[->, out = 20, in = 150]  (-6,0.6) to (-0.5,0.8);
		\draw[->, out = -30, in = -180]  (-4,-1.6) to (1.3,-1.9);
		\node[above] at (-2,0.45) { ${\p{F}}^{(L)}$};
		\node[above] at (-1.2,-2.15) { $\p{F}^{(R)}$};
		\node at (-6,-0.8) { $\widehat{\Omega}^{(L)}$};
		\node at (-4,-0.8) { $\widehat{\Omega}^{(R)}$};
		\node at (1,0.25) { $\Omega^{(L)}$};
		\node at (3,-1.25) { $\Omega^{(R)}$};
		
		\draw[fill = gray , opacity = 0.2] (-7,-1.5) -- (-5,-1.5) -- (-5,0.5) -- (-7,0.5) -- (-7,-1.5);
		\draw[very thick,red] (-7,-1.5) -- (-5,-1.5); 
		\draw[very thick,green] (-7,0.5) -- (-7,-1.5); 
		\draw[very thick,blue] (-5,0.5) -- (-7,0.5);
		
		\draw[dashed,->] (-7,-1.5) to (-7,1);
		\draw[dashed,->] (-7,-1.5) to (-2,-1.5);
		\draw (-7,-1.5) -- (-7,-1.6);
		\draw (-5,-1.5) -- (-5,-1.6);
		\draw (-7,-1.5) -- (-7.1,-1.5);
		\draw (-7,0.5) -- (-7.1,0.5);
		\node[below] at (-7,-1.6) {  $-1$};
		\node[below] at (-5,-1.6) {  $0$};
		\node[left] at (-7.1,-1.5) {  $0$};
		\node[left] at (-7.1,0.5) {  $1$};
		\node[right] at (-7,0.8) {  $\xi$};
		\node[below] at (-2.3,-1.45) {  $\zeta$};
		\node[very thick,brown] at (2.5,0.5) { \small $\Gamma$};
		
		\draw[fill = gray, opacity = 0.2] (-0,1.5) -- (4,1.5) -- (2,-0.5) -- (0,-0.5)--(-0,1.5);
		\draw[fill = gray, opacity = 0.2] (2,-0.5) -- (4,1.5) -- (4,-2.5) -- (2,-2.5)--(2,-0.5);
		\draw[very thick,blue] (-0,1.5) -- (4,1.5);
		\draw[very thick,blue] (4,1.5) -- (4,-2.5);
		\draw[very thick,red] (-0,-0.5) -- (2,-0.5);
		\draw[very thick,red] (2,-0.5) -- (2,-2.5);
		\draw[very thick,green] (0,-0.5) -- (0,1.5);
		\draw[very thick,green] (2,-2.5) -- (4,-2.5);
		\draw[very thick,brown] (2,-0.5) -- (4,1.5);
		\draw[very thick,brown] (-5,0.5) -- (-5,-1.5);
    \draw[->,thick,shift={(-4.5,0)}] (9,0) --(10,0);
 \draw[->,thick,shift={(-4.5,0)}] (9,0) --(9,1);
\node[shift={(-3.65,0)}] at (9.8,-0.2) {$x$};
\node[shift={(-3.6,0)}] at (8.75,0.8) {$y$};
		\end{tikzpicture}
	\caption{ \small A planar non-degenerate two-patch domain.}
		\label{Fig:1}
\end{figure}

In more detail,  we have 
\begin{align*}
\p{F} &\in C^0(\overline{\tilde{\Omega}}; \mathbb{R}^2), \ \overline{\tilde{\Omega}} \coloneqq \overline{\widehat{\Omega}^{(L)} \cup \widehat{\Omega}^{(R)}}, \ \  \textup{as global parametrization, i.e.}  \\ \p{F}_{|\widehat{\Omega}^{(S)}}&= \p{F}^{(S)} \colon \widehat{\Omega}^{(S)} \rightarrow {\Omega}^{(S)}, \ \ \textup{with} \ \
 \p{F}^{(S)} \in C^1(\overline{\widehat{\Omega}^{(S)}}; \mathbb{R}^2) \ \
 \textup{and}\\ \widehat{\Omega}^{(L)} &= (-1,0) \times (0,1), \ \ \ \widehat{\Omega}^{(R)} = (0,1) \times (0,1).
\end{align*}

One notes the colors in the mentioned Fig.  \ref{Fig:1} that should determine the   parametrization orientation used here and in the following. Further we assume that the $\p{F}^{(S)}$ are diffeomorphisms and write $\widehat{\Gamma} = \p{F}^{-1}({\Gamma}) \coloneqq \{ (0,\xi) \ | \xi \in [0,1]  \}$ for the interface. And, using the notation from the previous section, we can write $\p{F}_1 \coloneqq \p{F}^{(L)}(\cdot -1,\cdot), \ $
$\p{F}_2 \coloneqq \p{F}^{(R)}$, i.e. there is a shift in the parametric domain.
In view of isogeometric analysis we suppose  two functions $g^{(L)} \in C^1(\o^{(L)}),$ $g^{(R)} \in C^1(\o^{(R)})$ that can be combined to a continuous mapping $g \colon \o \rightarrow \mathbb{R}$ defined by $g_{|\o^{(S)}} = g^{(S)}$ and we want to know which properties of the $g^{(S)}$ lead to a $C^1$-regular $g$. The latter is the case if and only if the graph $ \mathcal{G} \coloneqq \{ (\p{F}(\f{\zeta}),g( \p{F}(\f{\zeta})) ) \ | \ \f{\zeta} \in \tilde{\Omega}  \} $ as a surface in $3D$ has  well-defined tangent planes along the patch interface. This condition can be formulated by means of auxiliary coefficient functions $\alpha^{(S)}, \ \beta $ as written down in the next lemma which uses Proposition 2 and  Definition 2 of \cite{Collin2016AnalysissuitableGM}.    
		\label{lemma:G1_cond}
		
\begin{lemma}[{$C^1$ regularity}]
\label{Lemma:C1_condition}
				Let $\hat{g} \colon \tilde{\Omega} \rightarrow \mathbb{R}$ be a continuous function with $\hat{g}^{(S)}  \coloneqq \hat{g}_{| \widehat{\Omega}^{(S)}} \in C^1(\overline{\widehat{\Omega}^{(S)}})$
	        	and let $g \coloneqq \hat{g} \circ \p{F}^{-1}  \colon  \Omega \rightarrow \mathbb{R}$. \\			
		    	Then  $ g \in C^1(\Omega)$ \  if and only if there exist mappings
		    	\  $   \alpha^{(L)}, \ \alpha^{(R)} , \ \beta \colon [0,1] \rightarrow \mathbb{R}$ \ s.t.  $\forall \xi \in [0,1]$:
			\begin{align}
			&\alpha^{(L)}(\xi) \, \alpha^{(R)}(\xi) >0  \ \ \   \textup{and}  \nonumber \\
			\alpha^{(R)}(\xi) \, \begin{bmatrix}
			\partial_{\zeta}\p{F}^{(L)}(0,\xi) \\ \label{C1condition_eq_2}\partial_{\zeta}\hat{g}^{(L)}(0,\xi)
			\end{bmatrix}-
			& \alpha^{(L)}(\xi) \, \begin{bmatrix}
			\partial_{\zeta}\p{F}^{(R)}(0,\xi) \\\partial_{\zeta}\hat{g}^{(R)}(0,\xi)
			\end{bmatrix}+
			\beta(\xi) \, \begin{bmatrix}
			\partial_{\xi}\p{F}^{(R)}(0,\xi) \\\partial_{\xi}\hat{g}^{(R)}(0,\xi)
			\end{bmatrix}= \p{0}.
			\end{align}
\end{lemma}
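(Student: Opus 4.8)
The statement is a standard characterization of $C^1$-gluing of a graph surface across a patch interface, and the plan is to reduce it to a geometric condition on tangent planes, then to translate that condition into the stated algebraic identity. First I would set up notation: since $\hat g^{(S)}\in C^1(\overline{\widehat\Omega^{(S)}})$ and the $\p{F}^{(S)}$ are diffeomorphisms, the map $g=\hat g\circ\p{F}^{-1}$ is automatically $C^1$ in the interior of each patch $\Omega^{(S)}$ and continuous on $\Omega$; the only thing to verify is $C^1$-regularity across the interface $\Gamma$. Equivalently, I would argue that $g\in C^1(\Omega)$ if and only if the graph surface $\mathcal G\subset\mathbb R^3$ has a well-defined (single) tangent plane at every point of $\Gamma$, i.e. the two one-sided tangent planes coming from the left and right patch agree along $\Gamma$. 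This equivalence is where I would invoke Proposition 2 of \cite{Collin2016AnalysissuitableGM}.

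Next I would compute the one-sided tangent planes explicitly. At a point $\p{F}(0,\xi)$ on the interface, the tangent plane of $\mathcal G$ from the right patch is spanned by the two vectors
\begin{align*}
\f{t}_\zeta^{(R)}(\xi) \coloneqq \begin{bmatrix}\partial_\zeta\p{F}^{(R)}(0,\xi)\\ \partial_\zeta\hat g^{(R)}(0,\xi)\end{bmatrix},\qquad
\f{t}_\xi^{(R)}(\xi) \coloneqq \begin{bmatrix}\partial_\xi\p{F}^{(R)}(0,\xi)\\ \partial_\xi\hat g^{(R)}(0,\xi)\end{bmatrix}\in\mathbb R^3,
\end{align*}
and similarly for the left patch with $\f{t}_\zeta^{(L)},\f{t}_\xi^{(L)}$. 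Because $\p{F}$ is globally $C^0$ and the two patches share the interface curve $\zeta\mapsto\p{F}(0,\xi)$ as $\xi$ varies, the $\xi$-derivative is common to both sides, so $\f{t}_\xi^{(L)}(\xi)=\f{t}_\xi^{(R)}(\xi)$ holds automatically (up to the orientation bookkeeping fixed by Fig.~\ref{Fig:1}); call it $\f{t}_\xi(\xi)$. Hence each one-sided tangent plane is $\mathrm{span}\{\f{t}_\zeta^{(S)}(\xi),\f{t}_\xi(\xi)\}$, and since the $\p{F}^{(S)}$ are diffeomorphisms these are genuinely two-dimensional. The two planes coincide precisely when $\f{t}_\zeta^{(L)}(\xi)$ lies in $\mathrm{span}\{\f{t}_\zeta^{(R)}(\xi),\f{t}_\xi(\xi)\}$, with the extra requirement that the planes have matching orientation so the glued surface is non-degenerate.

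Finally I would convert "coplanarity with consistent orientation" into the stated identity. Coplanarity of the three vectors $\f{t}_\zeta^{(L)}(\xi)$, $\f{t}_\zeta^{(R)}(\xi)$, $\f{t}_\xi(\xi)$ means there is a nontrivial linear relation
$\alpha^{(R)}(\xi)\,\f{t}_\zeta^{(L)}(\xi)-\alpha^{(L)}(\xi)\,\f{t}_\zeta^{(R)}(\xi)+\beta(\xi)\,\f{t}_\xi(\xi)=\p{0}$,
which is exactly \eqref{C1condition_eq_2}; the coefficients of $\f{t}_\zeta^{(L)}$ and $\f{t}_\zeta^{(R)}$ must both be nonzero (otherwise $\f{t}_\zeta^{(S)}$ would be parallel to $\f{t}_\xi$, contradicting the diffeomorphism property), so after normalization one may write them as $\alpha^{(R)}$ and $-\alpha^{(L)}$. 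The sign condition $\alpha^{(L)}(\xi)\alpha^{(R)}(\xi)>0$ encodes that the left and right tangent frames induce the \emph{same} orientation on the common tangent plane — i.e. that $g$ is truly $C^1$ and not merely "$G^1$ with a crease flip"; I would verify this by checking that the sign of $\alpha^{(L)}/\alpha^{(R)}$ controls whether $\f{t}_\zeta^{(L)}$ and $\f{t}_\zeta^{(R)}$ point to the same side of the line $\mathbb R\f{t}_\xi$. Conversely, given such $\alpha^{(L)},\alpha^{(R)},\beta$, the relation forces the one-sided gradients $\nabla g^{(L)}$ and $\nabla g^{(R)}$ to agree along $\Gamma$ (solve the first two rows for the two components of the physical gradient using invertibility of the Jacobian of $\p{F}^{(S)}$), giving $g\in C^1(\Omega)$.

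The main obstacle I anticipate is not the linear algebra but pinning down the orientation/sign bookkeeping cleanly: the parametric shift $\p{F}_1=\p{F}^{(L)}(\cdot-1,\cdot)$ and the orientation conventions of Fig.~\ref{Fig:1} mean one has to be careful about the direction in which $\f{t}_\zeta^{(L)}$ is taken, and it is precisely this choice that determines whether the correct condition is $\alpha^{(L)}\alpha^{(R)}>0$ or $<0$. I would handle this by fixing once and for all that $\partial_\zeta$ is the "outward from the interface" derivative on each side, matching the arrows in Fig.~\ref{Fig:1}, and then the positivity statement follows. Everything else is a direct application of Proposition~2 and Definition~2 of \cite{Collin2016AnalysissuitableGM}.
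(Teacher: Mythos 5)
Your proposal is correct and follows essentially the same route as the paper: the paper's proof is a direct appeal to Definition 2 and Proposition 2 of \cite{Collin2016AnalysissuitableGM}, i.e.\ exactly the tangent-plane characterization of the graph surface along $\Gamma$ that you reconstruct. Your additional unpacking of the coplanarity relation, the shared $\xi$-derivative term, and the sign condition is consistent with the paper's surrounding discussion and adds detail rather than a different argument.
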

\begin{proof}
		This follows directly by Definition 2 and Proposition 2 in \cite{Collin2016AnalysissuitableGM}.
\end{proof}

Note that in \eqref{C1condition_eq_2} the last term of the sum on the left side does not change if  $R$ is replaced by $L$, due to the assumed continuity of $\p{F}$ and $g$. In the  IGA context, one considers  cases like $\hat{g}^{(L)}(\cdot -1,\cdot) \in N_{p,p}^{r,r} $ and $\hat{g}^{(R)} \in N_{p,p}^{r,r}$. 
          \    More precisely,  the standard two-patch isogeometric spaces without coupling conditions are defined in the parametric domain through
$$\tilde{\mathcal{V}}_h^M \coloneqq \{  \hat{\phi} \colon \tilde{\Omega} \rightarrow \mathbb{R} \ | \ \hat{\phi}_{|\widehat{\Omega}^{(L)}}(\cdot-1,\cdot) \in N_{p,p}^{r,r}, \ \  \hat{\phi}_{|\widehat{\Omega}^{(R)}} \in N_{p,p}^{r,r} \}$$
and in the physical domain ${\mathcal{V}}_h^M  \coloneqq \tilde{\mathcal{V}}_h^M  \circ \p{F}^{-1}$. 

 Although there is  a clear criterion for $C^1$ regularity, the actual calculation of test functions meeting the conditions within the scope of numerical methods is in general not trivial. Especially if test functions are defined separately for each patch, like for the isogeometric spaces, and a suitable global $C^1$-smooth linear combination  of the test functions is  sought one can observe bad approximations properties and a loss of convergence in different situations. This is a reason why in \cite{Collin2016AnalysissuitableGM} a class a special parametrizations are introduced leading to optimal convergence of the isogeometric spline spaces under $h$-refinement, namely the so-called analysis-suitable $G^1$ parametrizations.

\begin{definition}[Analysis-suitable $G^1$ parameterizations; see \cite{Collin2016AnalysissuitableGM}]
	\label{Def: ASG1}
	Assume there are polynomial functions $\alpha^{(S)}, \ \beta^{(S)} \colon [0,1] \rightarrow \mathbb{R}$ of degree at most $1$ s.t.
	\begin{align}
	\alpha^{(R)}(\xi) \, \partial_{\zeta}\p{F}^{(L)}(0,\xi)- \alpha^{(L)}(\xi) 	\, \partial_{\zeta}\p{F}^{(R)}(0,\xi)+ \beta(\xi) \, 	\partial_{\xi}\p{F}^{(R)}(0,\xi) = \p{0}, \ \forall \xi,
	\end{align}
	with 
 $ \beta = \alpha^{(L)} \, \beta^{(R)} - \alpha^{(R)} \, \beta^{(L)}$. \\
 Then the parametrization $\p{F}$  is called analysis-suitable $G^1$.
\end{definition}

As already mentioned, the function spaces of interest are $C^1$-regular spline spaces
\begin{align}
\label{eq:definition_coupled_multipatch}
{\mathcal{V}}_h^{M,1} \coloneqq {\mathcal{V}}_h^M \cap C^1(\o).
\end{align}	

\begin{lemma}
	\label{Lemma:AsG1_convergence_prop}
	Let the parametrization be AS-$G^1$ and let $p>r+1>1$ for the underlying B-splines. Further, assume the patch coupling to be regular in the sense of Assumption \ref{Assumption:coupling} and let us assume only B-spline basis functions in the parametric domain, i.e. constant weight functions $W$.\\ Then in numerical applications the asymptotic convergence behaviour of the coupled spaces $\mathcal{V}_h^{M,1}$ conforms to the  optimal approximation rates. 
	In other words, AS-$G^1$ parametrizations do not suffer from order reductions,  $C^1$ locking, respectively. 
\end{lemma}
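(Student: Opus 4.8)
The statement is, up to its deliberately informal phrasing, the main approximation theorem of \cite{Collin2016AnalysissuitableGM}, so the plan is to reduce to that reference after checking that its hypotheses are met in our setting, and to recall the mechanism by which the AS-$G^1$ property excludes $C^1$-locking. First I would recast the claim as a best-approximation estimate: by a C\'ea-type argument it suffices to show that for $u$ in a smoothness class $H^s(\o)$ with $2 \le s \le p+1$ there is $u_h \in \mathcal{V}_h^{M,1}$ with $\norm{u-u_h}_{H^2(\o)} \lesssim h^{s-2}\,\norm{u}_{H^s(\o)}$ (the $H^2$-norm being the energy norm for the Kirchhoff plate problem treated below), with the constant independent of the mesh size $h$. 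Thus the whole lemma becomes a statement about the approximation power of the coupled spline space $\mathcal{V}_h^{M,1}$, and "the asymptotic convergence behaviour" it refers to is governed by this estimate.

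For the construction I would split $\mathcal{V}_h^{M,1}$ into an interior part and an interface part. The interior part consists of the multipatch splines whose pull-backs vanish to first order along $\widehat{\Gamma}$; on each patch this is an ordinary tensor-product spline space with clamped data at one edge, for which optimal approximation is classical; see \cite{IGA1}. For the interface part I would use the characterization of Lemma \ref{Lemma:C1_condition}: a multipatch spline $\hat\phi$ is $C^1$ if and only if its interface trace $\hat\phi^{(L)}(0,\cdot)=\hat\phi^{(R)}(0,\cdot)$ and the transversal derivatives $\partial_{\zeta}\hat\phi^{(S)}(0,\cdot)$ satisfy the single scalar relation carried by the last row of \eqref{C1condition_eq_2}, with coefficient functions $\alpha^{(S)},\beta$. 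Here the AS-$G^1$ hypothesis of Definition \ref{Def: ASG1} is decisive: since $\alpha^{(S)}$ and $\beta=\alpha^{(L)}\beta^{(R)}-\alpha^{(R)}\beta^{(L)}$ have degree at most $1$ and $2$ respectively, every function appearing in that relation is an ordinary univariate spline of $h$-independent degree (at most $p+1$) and of bounded knot multiplicity, so that the trace ranges freely over $S_p^r$ and, once it is fixed, the admissible transversal-derivative data fill a spline space whose dimension still grows like $h^{-1}$; the condition $p>r+1>1$ guarantees in addition that these normal-derivative spaces are genuine continuous spline spaces on the same mesh. One can then produce an admissible approximant by applying a standard univariate spline quasi-interpolant on $\widehat{\Gamma}$ to the trace and the (suitably rescaled) transversal derivative of $u\circ\p{F}$, extending it into each patch by the tensor-product quasi-interpolant with the first two coefficient layers prescribed, and absorbing the remaining residual into the interior part; Assumption \ref{Assumption:coupling} ensures that the interface data of the two patches live in the same spline space, so that this matching is well defined.

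Collecting the pieces, the error is bounded by the interior approximation error plus the interface-correction error, each controlled by one-dimensional and tensor-product spline estimates whose constants depend only on the fixed, smooth, $h$-independent data $\p{F}$, $\alpha^{(S)}$, $\beta$; the precise optimal rates are then those recorded in \cite{Collin2016AnalysissuitableGM}. The main obstacle is exactly the point glossed over in the previous paragraph: one must show that the AS-$G^1$ gluing does not reduce the dimension of the interface part below the expected count, equivalently that the resulting basis is $h$-uniformly stable and that matching the gluing data of degree at most $1$ never propagates into constraints of higher polynomial degree or higher knot multiplicity. For a general (non-analysis-suitable) $G^1$ parametrization this fails, and the resulting dimension drop is precisely the origin of $C^1$-locking; establishing that it does not occur under Definition \ref{Def: ASG1} together with $p>r+1$ is the technical core of \cite{Collin2016AnalysissuitableGM}, which we invoke. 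A secondary, more routine point is the compatible treatment at the two endpoints $\xi\in\{0,1\}$ of the interface, where the interface basis functions must join continuously with the clamped interior space.
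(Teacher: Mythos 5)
Your proposal is correct and ultimately rests on exactly the same foundation as the paper: the paper's entire proof is the one-line citation ``Compare Theorem 1 in \cite{Collin2016AnalysissuitableGM}'', and your argument, after its expository reduction to a best-approximation estimate and the interior/interface splitting, explicitly invokes the technical core of that same theorem for the decisive dimension-and-stability claim. The additional scaffolding you give (C\'ea reduction, trace and transversal-derivative spaces, role of $p>r+1$) is a faithful sketch of the reference's mechanism rather than a genuinely different route.
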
 
\begin{proof}
	Compare Theorem 1 in \cite{Collin2016AnalysissuitableGM}.
\end{proof}

An important point is the appearance of $C^1$ locking if we choose $p=r+1$ even if $\p{F}$ is AS-$G^1$ as shown by Theorem 2 in \cite{Collin2016AnalysissuitableGM}.\\
After the consideration of the regular two-patch case in $2D$ we face now the coupling problem for SB-parametrizations.

\section{Planar SB-IGA with $C^1$ coupling}
\label{sec:Planar SB-IGA}
Here we show that planar SB-IGA parametrizations are quasi analysis-suitable, except at the scaling center. This is important to obtain good convergence properties for the $C^1$-coupled test functions. The problem with the singular point is addressed in the subsequent second subsection.

\subsection{SB-parametrization as quasi AS-$G^1$}
Analogous to the classical planar two-patch case we first look at two scaled boundary patches as displayed in Fig. \ref{Fig:2}. This means, we have constants $c_1 >0, \ c_2\geq 0$ with two boundary curves $\gamma^{(S)}$ and a common scaling center $\f{x}_0$ and the patch parametrizations are of the form
\begin{equation}
\label{SB_IGA_param_2_patch}
\p{F}^{(S)}(\zeta, \xi) =   (c_1 \xi  + c_2) \ (\gamma^{(S)}(\zeta) - \f{x}_0)+ \f{x}_0.
\end{equation}

\begin{assumption}
	\label{assumption:SB-IGA_2_patch}
	Here and in the rest of the article we assume that the boundary curves $\gamma^{(S)}$ are parametrized in a strong $C^1$ sense and it is $$\f{0} \neq \partial_{\zeta} \gamma^{(L)}(0), \ \ \f{0} \neq \partial_{\zeta} \gamma^{(R)}(0), \ \ \gamma^{(L)}(0)=\gamma^{(R)}(0) .$$
	Moreover, we assume that the boundary curves are chosen in such a way that for every $\delta >0$, it is $\p{F}^{(S)}_{|\widehat{\Omega}^{(S)} \cap \tilde{\Omega}_{\delta}} \in C^1(\overline{\widehat{\Omega}^{(S)} \cap \tilde{\Omega}_{\delta}}), \ \tilde{\Omega}_{\delta} \coloneqq \tilde{\Omega} \textbackslash \{ (\zeta,\xi) \ | \ \zeta \in [-1,1], \ \xi \in [0,\delta] \}$. Further, the restriction $\p{F}^{(S)}_{|\widehat{\Omega}^{(S)} \cap \tilde{\Omega}_{\delta}}$ defines a diffeomorphism.
\end{assumption}

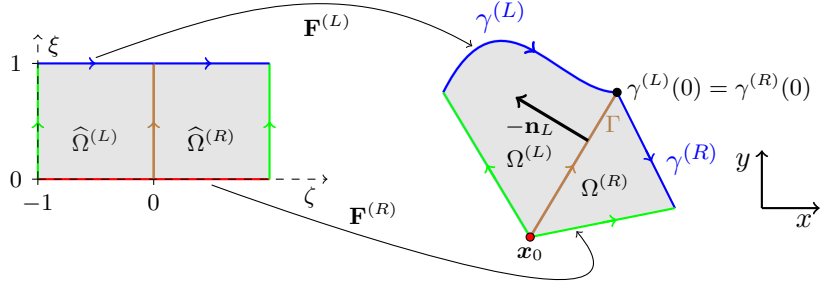
\begin{figure}
	\centering
	\begin{tikzpicture}[scale = 0.77]
	\draw[very thick,blue, fill = gray , opacity = 0.2]  (0,0) .. controls(1,2) and (2,0) .. (3,0) to (1.5,-2.5) to (0,0);
	\draw[blue,thick]  (0,0) .. controls(1,2) and (2,0) .. (3,0) ;
	\draw[->, very thick, blue] (1.4+0.18,0.5+0.185+0.015) -- (1.41+0.18,0.49+0.187+0.015);
	\draw[thick,green] (0,0) to (1.5,-2.5);
	\draw[thick,green,<-] (0.75,-1.25) to (0.75+0.15,-1.25-0.25);
	\draw[thick,brown] (3,0) to (1.5,-2.5);
	\node[above, blue] at (1,0.85) {$\gamma^{(L)}$};  
	\node[above, blue] at (4.3,-1.6) {$\gamma^{(R)}$};
	\draw[fill = gray , opacity = 0.2] (-5,-1.5) -- (-3,-1.5) -- (-3,0.5) -- (-5,0.5) -- (-5,-1.5);
	\draw[thick,red] (-5,-1.5) -- (-3,-1.5); 
	\draw[green,thick] (-3,-1.5) -- (-3,0.5); 
	\draw[brown,thick ] (-5,0.5) -- (-5,-1.5); 
	\draw[blue, thick ] (-3,0.5) -- (-5,0.5);
	
	\draw[blue, thick,->] (-4-0.1,0.5) -- (-4,0.5);
	\draw[blue, thick,->] (-6-0.1,0.5) -- (-6,0.5);
	\draw[green, thick,->] (-7,-0.6) -- (-7,-0.5);
	\draw[brown, thick,->] (-5,-0.6) -- (-5,-0.5);
	\draw[green, thick,->] (-3,-0.6) -- (-3,-0.5);
	
	\draw[->, out = 20, in = 150]  (-6,0.6) to (0.5,0.8);
	\draw[->, out = -20, in = -50]  (-4,-1.6) to (2.3,-2.4);
	\node[above] at (-2,0.8) {\footnotesize $\p{F}^{(L)}$};
	\node[above] at (-1.2,-2.45) {\footnotesize $\p{F}^{(R)}$};
	\node at (-6,-0.8) {\footnotesize $\widehat{\Omega}^{(L)}$};
	\node at (-4,-0.8) {\footnotesize $\widehat{\Omega}^{(R)}$};
	\node at (1.5,-1.1) {\footnotesize $\Omega^{(L)}$};
 \node at (1.5,-0.6) {\footnotesize $-\p{n}_L$};
 \node[brown] at (2.93,-0.55) {\footnotesize $\Gamma$};
	\node[right] at (3,0.1) {\footnotesize $\gamma^{(L)}(0)= \gamma^{(R)}(0)$};
	\node[below] at (1.5,-2.5) {\footnotesize $\f{x}_0$};
	
	\draw[fill = gray , opacity = 0.2] (-7,-1.5) -- (-5,-1.5) -- (-5,0.5) -- (-7,0.5) -- (-7,-1.5);
	\draw[thick, red] (-7,-1.5) -- (-5,-1.5); 
	\draw[brown, thick] (-5,-1.5) -- (-5,0.5); 
	\draw[thick, green] (-7,0.5) -- (-7,-1.5); 
	\draw[thick, blue] (-5,0.5) -- (-7,0.5);
	
	\draw[blue, fill = gray , opacity = 0.2]  (3,0) -- (4,-2) -- (1.5,-2.5) -- (3,0) ;
	\draw[blue,thick]  (3,0) -- (4,-2) ;
	\draw[blue,thick,->]  (3.5,-1) -- (3.5+0.1,-1-0.2) ;
	\draw[thick, green] (4,-2) -- (1.5,-2.5);
	\draw[thick, green,<-] (2.75+0.25,-2.25+0.05) -- (2.75,-2.25);
	
	\draw[thick, brown]  (1.5,-2.5) -- (3,0);
    \draw[very thick, ->] (2.5,-2.5/3) -- (2.5-5/4,-2.5/3+3/4); 
	\draw[thick, brown,->]  (2.25-0.15,-1.25-0.25) -- (2.25,-1.25);
	\node at (2.8,-1.6) {\footnotesize $\Omega^{(R)}$};
	\draw[dashed,->] (-7,-1.5) to (-7,1);
	\draw[dashed,->] (-7,-1.5) to (-2,-1.5);
	\draw (-7,-1.5) -- (-7,-1.6);
	\draw (-5,-1.5) -- (-5,-1.6);
	\draw (-7,-1.5) -- (-7.1,-1.5);
	\draw (-7,0.5) -- (-7.1,0.5);
	\node[below] at (-7,-1.6) { \footnotesize $-1$};
	\node[below] at (-5,-1.6) { \footnotesize $0$};
	\node[left] at (-7.1,-1.5) { \footnotesize $0$};
	\node[left] at (-7.1,0.5) { \footnotesize $1$};
	\node[right] at (-7,0.8) { \footnotesize $\xi$};
	\node[below] at (-2.3,-1.45) { \footnotesize $\zeta$};
	\draw[fill = red] (1.5,-2.5) circle(2pt); 
	\draw[fill = black] (3,0) circle(2pt); 
   \draw[->,thick,shift={(-3.5,-2)}] (9,0) --(10,0);
 \draw[->,thick,shift={(-3.5,-2)}] (9,0) --(9,1);
\node[shift={(-2.75,-1.55)}] at (9.8,-0.2) {$x$};
\node[shift={(-2.75,-1.55)}] at (8.75,0.8) {$y$};
	\end{tikzpicture}
	\caption{\footnotesize Two-patch situation for SB-IGA.}
	\label{Fig:2}
\end{figure}

\begin{lemma}[{SB-IGA patch coupling as quasi AS-$G^1$}]
       \label{Lemma:quasi_ASG1}
	For $c_2>0$ the two-patch param. \ref{SB_IGA_param_2_patch} is AS-$G^1$.	And for  $c_2=0$ the parametrization is AS-$G^1$ except at the scaling center $\f{x}_0$. In other words,  the condition in  Definition \ref{Def: ASG1} is fulfilled.
\end{lemma}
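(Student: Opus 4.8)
The plan is to turn the $G^1$ identity of Definition \ref{Def: ASG1} into an elementary linear-algebra identity among three \emph{constant} vectors of $\mathbb{R}^2$, using that the scaled-boundary ansatz \eqref{SB_IGA_param_2_patch} is affine in $\xi$. First I would differentiate \eqref{SB_IGA_param_2_patch} along the interface $\zeta=0$. Writing $q(\xi):=c_1\xi+c_2$ and $\f{v}_S:=\partial_{\zeta}\gamma^{(S)}(0)\neq\f{0}$, $\f{w}:=\gamma^{(R)}(0)-\f{x}_0=\gamma^{(L)}(0)-\f{x}_0$ (the last equality by Assumption \ref{assumption:SB-IGA_2_patch}), one obtains for $S\in\{L,R\}$
\begin{align*}
\partial_{\zeta}\p{F}^{(S)}(0,\xi)=q(\xi)\,\f{v}_S,\qquad \partial_{\xi}\p{F}^{(S)}(0,\xi)=c_1\,\f{w}.
\end{align*}
Thus the $\xi$-derivative columns are $\xi$-independent (and agree for $S=L,R$, consistent with continuity of $\p{F}$), while the $\zeta$-derivative columns are fixed vectors scaled by the degree-one polynomial $q$.

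Next I would invoke the Grassmann/Cramer identity valid for any $\f{a},\f{b},\f{c}\in\mathbb{R}^2$,
\begin{align*}
\det(\f{b},\f{c})\,\f{a}-\det(\f{a},\f{c})\,\f{b}+\det(\f{a},\f{b})\,\f{c}=\f{0},
\end{align*}
which holds since three planar vectors are dependent (expand $\f{c}$ in $\{\f{a},\f{b}\}$, and use continuity in the degenerate case). Applying it with $\f{a}=\f{v}_L$, $\f{b}=\f{v}_R$, $\f{c}=\f{w}$ and multiplying by $q(\xi)$ gives
\begin{align*}
\det(\f{v}_R,\f{w})\,\partial_{\zeta}\p{F}^{(L)}(0,\xi)-\det(\f{v}_L,\f{w})\,\partial_{\zeta}\p{F}^{(R)}(0,\xi)+\tfrac{1}{c_1}\det(\f{v}_L,\f{v}_R)\,q(\xi)\,\partial_{\xi}\p{F}^{(R)}(0,\xi)=\f{0}.
\end{align*}
Hence I would put $\alpha^{(L)}:=\det(\f{v}_L,\f{w})$, $\alpha^{(R)}:=\det(\f{v}_R,\f{w})$ (constants, so of degree $\le 1$) and $\beta(\xi):=\tfrac{1}{c_1}\det(\f{v}_L,\f{v}_R)\,q(\xi)$, which is of degree $\le 1$; to match the factorised form demanded in Definition \ref{Def: ASG1} I would set $\beta^{(L)}\equiv 0$ and $\beta^{(R)}(\xi):=\tfrac{1}{c_1\,\alpha^{(L)}}\det(\f{v}_L,\f{v}_R)\,q(\xi)$, so that $\beta=\alpha^{(L)}\beta^{(R)}-\alpha^{(R)}\beta^{(L)}$ with both $\beta^{(S)}$ of degree $\le 1$ — legitimate once $\alpha^{(L)}\neq 0$ is known.

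It then remains to check the sign condition $\alpha^{(L)}(\xi)\,\alpha^{(R)}(\xi)>0$. For $\xi\in(0,1]$ (for all $\xi\in[0,1]$ if $c_2>0$) the Jacobian of $\p{F}^{(S)}$ at $(0,\xi)$ equals $\det\bigl(\partial_{\zeta}\p{F}^{(S)}(0,\xi),\partial_{\xi}\p{F}^{(S)}(0,\xi)\bigr)=c_1\,q(\xi)\,\det(\f{v}_S,\f{w})$, which by Assumption \ref{assumption:SB-IGA_2_patch} (the restricted $\p{F}^{(S)}$ being diffeomorphisms away from the scaling center) is nonzero and, the two patches being consistently oriented in the sense of Fig. \ref{Fig:2} and \cite{Collin2016AnalysissuitableGM}, of one and the same sign for $S=L$ and $S=R$; since $c_1,q(\xi)>0$ there, $\det(\f{v}_L,\f{w})$ and $\det(\f{v}_R,\f{w})$ are nonzero and share their sign, so $\alpha^{(L)}\alpha^{(R)}=\det(\f{v}_L,\f{w})\det(\f{v}_R,\f{w})>0$ on all of $[0,1]$ (in particular $\f{w}\neq\f{0}$). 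This establishes the AS-$G^1$ identity along the whole interface. Finally, if $c_2>0$ then $q>0$ on $[0,1]$ and $\p{F}^{(S)}$ is non-degenerate there, so the parametrization is AS-$G^1$ without restriction; if $c_2=0$ then $q(0)=0$ and $\p{F}^{(S)}(\cdot,0)\equiv\f{x}_0$ is singular at $\xi=0$, i.e.\ exactly at the scaling center, so the statement is only claimed on the punctured domain — but the polynomials $\alpha^{(L)},\alpha^{(R)},\beta$ above are the same in both cases.

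I expect the delicate point to be the sign condition: pinning down that $\det(\f{v}_L,\f{w})$ and $\det(\f{v}_R,\f{w})$ carry the same sign is not forced by the algebra and has to be drawn from the fixed orientation convention together with the diffeomorphism hypothesis of Assumption \ref{assumption:SB-IGA_2_patch}; everything else reduces to the one planar determinant identity above.
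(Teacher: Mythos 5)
Your proof is correct and, in substance, parallels the paper's: both arguments explicitly exhibit constant $\alpha^{(S)}$ and a degree-one $\beta$ verifying the identity of Definition \ref{Def: ASG1}, and then split $\beta$ into admissible $\beta^{(S)}$ using that the $\alpha^{(S)}$ are nonzero constants. The differences are organizational but worth noting. The paper decomposes $\gamma^{(L)}(0)-\f{x}_0$ in the basis $\{\partial_\zeta\gamma^{(L)}(0),\partial_\zeta\gamma^{(R)}(0)\}$, which forces a case distinction between non-parallel and parallel interface tangents, whereas your planar Cramer identity treats both cases in one stroke (when $\det(\f{v}_L,\f{v}_R)=0$ your $\beta$ vanishes, reproducing the paper's second case); indeed your $\alpha^{(S)}$ are, up to the common nonzero factor $-\det(\f{v}_L,\f{v}_R)$, exactly the paper's coefficients $d_1$ and $-d_2$ read through Cramer's rule. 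For the sign and nondegeneracy of the $\alpha^{(S)}$ the paper argues with the cone $\mathcal{S}$ of Fig. \ref{Fig:3} that $d_1\,d_2<0$, while you deduce $\det(\f{v}_L,\f{w})\det(\f{v}_R,\f{w})>0$ from the nonvanishing and sign-consistency of the interface Jacobians $c_1\,q(\xi)\det(\f{v}_S,\f{w})$; these encode the same geometric fact (both $\zeta$-derivatives point to the same side of the line spanned by $\f{w}$ because the two patches lie on opposite sides of $\Gamma$), and your version has the small advantage of excluding $\alpha^{(S)}=0$ directly from the diffeomorphism hypothesis in Assumption \ref{assumption:SB-IGA_2_patch}, a point the paper leaves to the figure. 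Your asymmetric choice $\beta^{(L)}\equiv 0$, $\beta^{(R)}=\beta/\alpha^{(L)}$ is as legitimate as the paper's symmetric split, and your handling of $c_2>0$ versus $c_2=0$ matches the paper's.
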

\begin{proof}
	We show the assertion for the interesting case $c_2=0$. If $c_2>0$ similar proof steps can be used.\\
	First let $\delta >0$ and $\partial_{\zeta} \gamma^{(R)}(0) \nparallel \partial_{\zeta} \gamma^{(L)}(0)$.
	Obviously, $\p{F}_{|\tilde{\Omega}_{\delta}}$ is globally continuous.
	In view of \eqref{SB_IGA_param_2_patch} and Assumption \ref{assumption:SB-IGA_2_patch} there are  $ d_1, \ d_2 \in \mathbb{R} $ with $d_1 \ \partial_{\zeta} \gamma^{(L)}(0) + d_2  \ \partial_{\zeta} \gamma^{(R)}(0) = \gamma^{(L)}(0) -\f{x}_0$. \\
	By the orientation of the parametrization as elucidated in Fig. \ref{Fig:3} it has to be $d_1 \neq 0, \ d_2  \neq 0 $ and $d_1 \, d_2 < 0$. Namely,  for the set  $\mathcal{S}=\{  a_1 \partial_{\zeta} \gamma^{(L)}(0) + a_2 \partial_{\zeta} \gamma^{(R)}(0) \ | \ a_1>0, \, a_2 >0 \}$ we see  $ \gamma^{(L)}(0)-\f{x}_0 \notin \mathcal{S}$ and $ -\gamma^{(L)}(0)+\f{x}_0 \notin \mathcal{S}$. \\
	Set $\alpha^{(R)} \coloneqq d_1 , \ \alpha^{(L)} \coloneqq -d_2  $ and $\beta \coloneqq \frac{c_1 \xi + c_2}{-c_1}$. \\
	Then: 
	\begin{align*}
	\alpha^{(R)} &	\partial_{\zeta}\p{F}^{(L)}(0,\xi) - \alpha^{(L)} 	\partial_{\zeta}\p{F}^{(R)}(0,\xi) + \beta(\xi) \, 	\partial_{\xi}\p{F}^{(L)}(0,\xi) \\
	&= d_1 \ (c_1 \xi + c_2) \ \partial_{\zeta} \gamma^{(L)}(0) + d_2 \ (c_1 \xi + c_2) \  \partial_{\zeta} \gamma^{(R)}(0) + \frac{c_1 \xi + c_2}{-c_1} \, c_1 \, (\gamma^{(L)}(0) - \f{x}_0) \\
	&=  (c_1 \xi + c_2) \ \big[d_1 \ \partial_{\zeta} \gamma^{(L)}(0) + d_2 \   \partial_{\zeta} \gamma^{(R)}(0) -   \, (\gamma^{(L)}(0) - \f{x}_0) \big] =0.
	\end{align*}
	Then setting $\beta^{(L)} \coloneqq \frac{-\beta}{2 \, \alpha^{(R)}}, \ \beta^{(R)} \coloneqq \frac{\beta}{2 \, \alpha^{(L)}}$ we get $\beta = \alpha^{(L)} \beta^{(R)} - \alpha^{(R)} \beta^{(L)}$. This finishes the proof for $\partial_{\zeta} \gamma^{(R)}(0) \nparallel \partial_{\zeta} \gamma^{(L)}(0)$. \\
	Now let $\partial_{\zeta} \gamma^{(R)}(0) = c \  \partial_{\zeta} \gamma^{(L)}(0) $ for $c \neq 0$. It is easy to check that it has to be $c >0$ otherwise we can not have $\p{F}^{(S)}_{|\widehat{\Omega}^{(S)} \cap \tilde{\o}_{\delta}} \in C^1(\overline{\widehat{\Omega}^{(S)} \cap \tilde{\o}_{\delta}})$. Then with  $\alpha^{(R)} \coloneqq c , \ \alpha^{(L)} \coloneqq 1, \ \beta = 0$ one  gets 
	\begin{align*}
	\alpha^{(R)} &	\partial_{\zeta}\p{F}^{(L)}(0,\xi) - \alpha^{(L)} 	\partial_{\zeta}\p{F}^{(R)}(0,\xi) + \beta \, 	\partial_{\xi}\p{F}^{(L)}(0,\xi) \\
	&= c \ (c_1 \xi + c_2) \ \partial_{\zeta} \gamma^{(L)}(0) -  (c_1 \xi + c_2) \  \partial_{\zeta} \gamma^{(R)}(0)  \\
	&=  c \, (c_1 \xi + c_2) \ \big( \partial_{\zeta} \gamma^{(L)}(0) -  \   \partial_{\zeta} \gamma^{(L)}(0) \big) =0.
	\end{align*}
	
\end{proof}

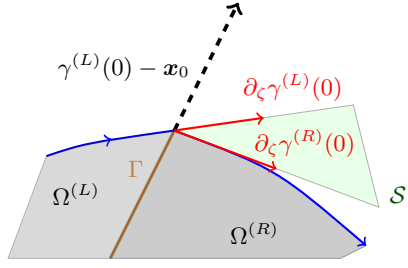
\begin{figure}[h!]
	\centering
	\begin{tikzpicture}[scale=1.7]
	\draw[fill = gray, very thin, opacity = 0.4] (-0.5,-1) to (0,0) .. controls(0.0+0.8,0.0-0.3) .. (1.5,-0.9) to (01.3,-1) to (-0.5,-1);
	
	\draw[blue,thick,->] (0,0) .. controls(0.0+0.8,0.0-0.3) .. (1.5,-0.9);
	\draw[very thick,brown] (0-0.5,-1) -- (0,0);
	
	\draw[blue,thick] (0,0) .. controls(0.0-0.7,0.0-0.1) .. (-1,-0.2);
	\draw[->,line width=0.5mm,dashed] (0,0) --(0.5,1);

 \draw[->,thick,blue,shift={(0,0.13)}]  (-0.5,-0.2) -- (-0.5+0.01,-0.2+0.0025);
	
	\draw[fill=green!30,opacity=0.3] (0,0) -- (2*0.7,2*0.1) -- (2*0.8,-0.3*2) -- (0,0);
	\draw[thick,->,color=red] (0,0) -- (0.7,0.1);
	\draw[thick,->,color=red] (0,0) -- (0.8,-0.3);
	\draw[fill = gray, very thin, opacity = 0.3] (-0.5,-1) to (0,0) .. controls(0.0-0.7,0.0-0.1) .. (-1,-0.2) to (-1.3,-1) to (-0.5,-1);
	\node[left] at (-0.5,-0.5) { \footnotesize $\Omega^{(L)}$};
	\node[left] at (0.9,-0.8) { \footnotesize $\Omega^{(R)}$};
	\node[left,red,thick] at (1.4,0.34) { \footnotesize $\partial_{\zeta}\gamma^{(L)}(0)$};
	\node[left,red,thick] at (1.5,-0.1) { \footnotesize $\partial_{\zeta}\gamma^{(R)}(0)$};
	\node[left,green!30!black,thick] at (1.9,-0.5) { \footnotesize $\mathcal{S}$};
	\node[left] at (0.2,0.5) { \footnotesize $\gamma^{(L)}(0)-\f{x}_0$};
  \node[brown] at (-0.3,-0.3) {\footnotesize $\Gamma$};
	\end{tikzpicture}
	\caption{\footnotesize The orientation of the parametrization leads to $d_1\,d_2 <0$.  }
	\label{Fig:3}
\end{figure}

 Lemma \ref{Lemma:AsG1_convergence_prop} and Lemma \ref{Lemma:quasi_ASG1} suggest that $C^1$ coupling for SB-IGA planar parametrizations leads to adequate approximation results of the underlying isogeometric spaces.   In the paper  \cite{Collin2016AnalysissuitableGM} an important property of the AS-$G^1$ geometries is the large enough interface spaces of traces and transversal directional derivatives, see (27) and (28) in \cite{Collin2016AnalysissuitableGM}. More precisely, for the SB-IGA two-patch case we look at the space $$\widehat{\mathcal{V}}_{h,\Gamma} \coloneqq \{ \xi \mapsto [\phi, \nabla \phi \cdot \p{d}] \circ \p{F}^{(R)}(0,\xi) \ | \ \phi \in \mathcal{V}_h^{M,1} \},$$ where $\p{d}$ is the transversal vector at the interface defined through $$\p{d} \circ \p{F}^{(R)}(0,\xi) = \frac{1}{\alpha^{R}} \Big[ \partial_{\zeta}\p{F}^{(R)}(0,\xi)- \beta^{(R)}(\xi)\partial_{\xi}\p{F}^{(R)}(0,\xi) \Big] \ ;  \ \ \textup{cf. (23) in \cite{Collin2016AnalysissuitableGM}}.$$ Note that we can think of constant $\alpha^{(S)}$ due to the proof of Lemma \ref{Lemma:quasi_ASG1}. Now we want to explain, why for SB-IGA with NURBS boundary curves the corresponding interface space  still contains enough elements, at least away from the scaling center. The problem with the singularity is then faced in the subsequent section.
\begin{lemma}
    Let $p>r+1>1$ and $j>2, \ l>2$. Then $\widehat{B}_{j,p}^{r+1} \times \widehat{B}_{l,p}^{r} \in \widehat{\mathcal{V}}_{h,\Gamma}$. Moreover,
    if $c_2=0$ in \eqref{SB_IGA_param_2_patch} and if the $\p{F}^{(S)} \in C^1(\overline{\widehat{\o}^{(S)}})$ have a smooth inverse, it holds  $S_p^{r+1} \times S_{p}^r \subset \widehat{\mathcal{V}}_{h,\Gamma}$.
\end{lemma}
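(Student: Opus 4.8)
The plan is to reduce both claims to an explicit interpolation problem in the parametric domain and to check carefully that the prescribed data remain inside $N_{p,p}^{r,r}$; the role of the hypothesis $p>r+1$ and the reason the first slot is the \emph{smoother} space $S_p^{r+1}$ will surface exactly in that check. For $\phi\in\mathcal{V}_h^{M,1}$ set $\hat\phi^{(S)}\coloneqq\phi\circ\p{F}^{(S)}\in N_{p,p}^{r,r}$. By Lemma~\ref{Lemma:C1_condition} together with Lemma~\ref{Lemma:quasi_ASG1} and its proof — which yields \emph{constant} $\alpha^{(L)},\alpha^{(R)}$ and degree-$\le 1$ polynomials $\beta^{(L)},\beta^{(R)}$ with $\beta^{(S)}(0)=0$ and $\beta=\alpha^{(L)}\beta^{(R)}-\alpha^{(R)}\beta^{(L)}$ — the function $\phi$ is $C^1$ across $\Gamma$ precisely when $\hat\phi^{(L)}(0,\cdot)=\hat\phi^{(R)}(0,\cdot)=:\phi_0$ and
\begin{equation*}
\alpha^{(R)}\,\partial_\zeta\hat\phi^{(L)}(0,\xi)-\alpha^{(L)}\,\partial_\zeta\hat\phi^{(R)}(0,\xi)+\beta(\xi)\,\phi_0'(\xi)=0,
\end{equation*}
the remaining (geometric) rows of \eqref{C1condition_eq_2} holding automatically since the parametrization is AS-$G^1$. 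By the chain rule and the definition of $\p{d}$,
\begin{equation*}
[\phi,\ \nabla\phi\cdot\p{d}]\circ\p{F}^{(R)}(0,\xi)=\Bigl(\phi_0(\xi),\ \tfrac{1}{\alpha^{(R)}}\bigl[\partial_\zeta\hat\phi^{(R)}(0,\xi)-\beta^{(R)}(\xi)\,\phi_0'(\xi)\bigr]\Bigr).
\end{equation*}

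Given the target $(f,g)$ — either $(\widehat{B}_{j,p}^{r+1},\widehat{B}_{l,p}^{r})$ or a general element of $S_p^{r+1}\times S_p^r$ — I would read off the parametric data $\phi$ must carry: $\phi_0=f$, whence $\partial_\zeta\hat\phi^{(R)}(0,\cdot)=\alpha^{(R)}g+\beta^{(R)}f'$, and, substituting into the scalar interface relation and using $\beta=\alpha^{(L)}\beta^{(R)}-\alpha^{(R)}\beta^{(L)}$, also $\partial_\zeta\hat\phi^{(L)}(0,\cdot)=\alpha^{(L)}g+\beta^{(L)}f'$. The decisive step is admissibility: these prescribed functions of $\xi$ must belong to $S_p^r$. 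Since $f\in S_p^{r+1}$ one has $f'\in S_{p-1}^{r}$ by \eqref{eq:soline_der}, and multiplication by the degree-$\le 1$ polynomial $\beta^{(S)}$ keeps the degree $\le p$ and the regularity $r$; together with $\alpha^{(S)}g\in S_p^r$ (as $\alpha^{(S)}$ is constant) this gives $\alpha^{(S)}g+\beta^{(S)}f'\in S_p^r$, $S\in\{L,R\}$. Had $f$ only been $C^r$, $f'$ would be only $C^{r-1}$ and the construction would break; this is where $S_p^{r+1}$ is forced.

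It remains to realize the prescribed trace and transversal $\zeta$-derivative at $\zeta=0$ by genuine tensor-product NURBS. Writing $\hat\phi^{(R)}=\sum_{i,j}c_{i,j}\,\widehat{N}_{i,p}^{r}(\zeta)\,\widehat{B}_{j,p}^{r}(\xi)$ and exploiting that the knot vectors are $p$-open — so $\widehat{N}_{1,p}^{r}(0)=1$, $\widehat{N}_{i,p}^{r}(0)=0$ for $i\ge 2$, $(\widehat{N}_{i,p}^{r})'(0)=0$ for $i\ge 3$, and $(\widehat{N}_{2,p}^{r})'(0)\neq 0$ — the trace equals $\sum_j c_{1,j}\widehat{B}_{j,p}^{r}(\xi)$ and the transversal $\zeta$-derivative at $0$ equals $\sum_j\bigl(c_{1,j}(\widehat{N}_{1,p}^{r})'(0)+c_{2,j}(\widehat{N}_{2,p}^{r})'(0)\bigr)\widehat{B}_{j,p}^{r}(\xi)$. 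Hence I first choose $\{c_{1,j}\}$ with $\sum_j c_{1,j}\widehat{B}_{j,p}^{r}=f$ (possible as $S_p^{r+1}\subset S_p^r$), then $\{c_{2,j}\}$ so that the displayed derivative expansion equals $\alpha^{(R)}g+\beta^{(R)}f'\in S_p^r$, and set all further coefficients to zero; $\hat\phi^{(L)}$ is built the same way from $\alpha^{(L)}g+\beta^{(L)}f'$ with the \emph{same} first layer $\{c_{1,j}\}$. Continuity and the scalar interface relation then hold by construction, so the resulting $\phi$ lies in $\mathcal{V}_h^M$, is $C^1$ up to $\Gamma$, and satisfies $[\phi,\nabla\phi\cdot\p{d}]\circ\p{F}^{(R)}(0,\cdot)=(f,g)$.

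The last point is $C^1$-smoothness at the scaling center $\f{x}_0$. For the first claim the hypotheses $j>2$, $l>2$ force $f$, $f'$, $g$ to vanish to order $\ge 2$ at $\xi=0$, hence $c_{1,1}=c_{1,2}=c_{2,1}=c_{2,2}=0$ and $\hat\phi^{(S)}$ involves only the $\xi$-splines $\widehat{B}_{j,p}^{r}$ with $j\ge 3$, which themselves vanish to order $\ge 2$ on $\{\xi=0\}$; this second-order decay at the scaling edge is precisely what makes the singular push-forward $C^1$ at $\f{x}_0$, a point treated in the following subsection. For the ``moreover'' part the extra assumption that $\p{F}^{(S)}$ has a smooth inverse removes the singularity altogether, so $\phi^{(S)}=\hat\phi^{(S)}\circ(\p{F}^{(S)})^{-1}\in C^{r}\subset C^1$ is immediate. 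I expect the admissibility/regularity bookkeeping of the second paragraph, together with the verification of $C^1$-behaviour at the singular scaling center, to be the main obstacles.
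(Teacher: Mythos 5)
Your proposal is correct and follows essentially the same route as the paper: you prescribe the interface data $\hat{\phi}(0,\cdot)=f$ and $\partial_\zeta\hat{\phi}^{(S)}(0,\cdot)=\alpha^{(S)}g+\beta^{(S)}f'$, verify these lie in $S_p^r$ (using $p>r+1$ and $f\in S_p^{r+1}$), check the $C^1$ coupling identity via $\beta=\alpha^{(L)}\beta^{(R)}-\alpha^{(R)}\beta^{(L)}$, and handle the scaling center through the second-order vanishing forced by $j,l>2$. The paper realizes the same data with the explicit generator $\widehat{B}_{j,p}^{r+1}(\xi)+\bigl[\beta^{(S)}(\xi)\partial_\xi\widehat{B}_{j,p}^{r+1}(\xi)+\alpha^{(S)}\widehat{B}_{l,p}^{r}(\xi)\bigr]\hat{c}^{(S)}(\zeta)$, of which your coefficient-by-coefficient construction is just an unpacking.
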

\begin{proof}
We follow the proof idea from \cite{Collin2016AnalysissuitableGM}.\\
First, let $c_2=0$ and assume a singular parametrization. 
    By the properties of the NURBS, it is straightforward to see that there are functions $ \hat{c}^{(S)} \in N_p^r $ with $\hat{c}^{(S)}(\zeta) = \zeta + \mathcal{O}(\zeta^2)$. Let $j>2, \, l>2$ and 
      define $$ \hat{g}^{(S)}(\zeta,\xi) \coloneqq \widehat{B}_{j,p}^{r+1}( \xi) + \big[ \beta^{(S)}(\xi) \partial_{\xi}\widehat{B}_{j,p}^{r+1}( \xi) + \alpha^{(S)} \widehat{B}_{l,p}^{r}(\xi)\big] \ \hat{c}^{(S)}(\zeta) $$
      Obviously, the composed mapping $\hat{g}$, i.e. $\hat{g}_{|\widehat{\o}^{(S)}} = \hat{g}^{(S)}$ is continuous and except of a shift in the parameter $\zeta$ they are feasible NURBS basis functions in $N_p^r \otimes S_p^r$. Moreover, one has $\hat{g}^{(S)}(\zeta,\xi) \in O(\xi^2)$ and in view of the considerations in Sec. \ref{section:approx_scaling} this implies $\hat{g}_{|\widehat{\o}^{(S)}} \circ (\p{F}^{(S)})^{-1} \in C^1(\overline{\o^{(S)}})$. The $C^1$ regularity of $g = \hat{g} \circ \p{F}^{-1}$ across the interface follows by Lemma \ref{Lemma:C1_condition}, meaning $ g \in \mathcal{V}_h^{M,1}$. And using (28)-(29) from  \cite{Collin2016AnalysissuitableGM} we can conclude $[g, \nabla g \cdot \p{d}] \circ \p{F}^{(R)}(0,\xi) = [\widehat{B}_{j,p}^{r+1}(\xi), \widehat{B}_{l,p}^{r}(\xi)]$.\\Basically, an analogous  argumentation yields the second part  of the assertion.
\end{proof}

In view of the Theorem 1 in \cite{Collin2016AnalysissuitableGM}, the space $\mathcal{V}_h^{M,1}$ seems appropriate for approximations away from the scaling center. From now on, we concentrate on the more interesting case with a singular parametrization, i.e. we assume that we have the scaling factor $q(\xi)=\xi$; cf. \eqref{eq:SB-IGA_param}.

\subsection{Approximation in the scaling center}
\label{section:approx_scaling}

 Clearly, a special consideration of the behavior near the scaling center is needed.  But a simple calculation shows that only such splines may cause problems at the scaling center which have non-vanishing values or derivatives in points $(\zeta, 0)$. For this purpose assume w.l.o.g. $\f{x}_0 = \f{0}, \ \p{F}_m(\zeta, \xi) =    \xi   \ \gamma(\zeta)$ and let $$\hat{\phi} \in \textup{span}\{ \widehat{N}_{i,p}^r \cdot \widehat{B}_{j,p}^r  \ | \ j\geq 3, r \geq 1\},$$ i.e. we have  a $C^1$ function with $\hat{\phi}= \partial_{\xi}\hat{\phi}= \partial_{\zeta}\hat{\phi} =0$ on $\{  (\zeta, 0) \  | \ \zeta \in [0,1] \}$; cf. \eqref{eq:soline_der}. \\
Now we check that the push-forward  $\phi \coloneqq \hat{\phi} \circ \p{F}^{-1}_m$ has a well-defined value and derivatives in $\f{x}_0$. The former is obvious and we concentrate on the derivatives. The chain rule yields 
\begin{align}
\label{eq:derivative_limit_1}
    \begin{bmatrix}
{\partial}_{\zeta} \hat{\phi}(\zeta,\xi) \\
\partial_{\xi}  \hat{\phi}(\zeta,\xi)
    \end{bmatrix} = \begin{bmatrix}
        \xi {\partial}_{\zeta}\gamma_1(\zeta)  & \xi {\partial}_{\zeta}\gamma_2(\zeta) \\
         \gamma_1(\zeta)  & \gamma_2(\zeta)
    \end{bmatrix} \begin{bmatrix}
        \partial_{x} \phi \circ \p{F}_m(\zeta,\xi) \\
        \partial_y \phi \circ \p{F}_m(\zeta,\xi)
    \end{bmatrix},
\end{align}
where $\gamma = (\gamma_1,\gamma_2)$.
If we consider the derivatives away from the singular point, we get by  assumption that the matrix on the right-hand side above is invertible, i.e.  $$ 0 \neq d(\zeta) \coloneqq {\partial}_{\zeta}\gamma_1(\zeta) \  \gamma_2(\zeta) -  {\partial}_{\zeta}\gamma_2(\zeta) \ \gamma_1(\zeta).$$
Hence,  it is
\begin{align}
\label{eq:derivativ limit_2}
   \begin{bmatrix}
        \partial_{x} \phi \circ \p{F}_m(\zeta,\xi)\\
        \partial_y \phi\circ \p{F}_m(\zeta,\xi)
   \end{bmatrix}  =  \frac{1}{\xi \ d(\zeta)}  \begin{bmatrix}
        \gamma_2(\zeta)  & -\xi {\partial}_{\zeta}\gamma_2(\zeta) \\
         -\gamma_1(\zeta)  & \xi {\partial}_{\zeta}\gamma_1(\zeta) 
    \end{bmatrix} 
     \begin{bmatrix}
{\partial}_{\zeta}  \hat{\phi} (\zeta,\xi)\\
\partial_{\xi}  \hat{\phi}(\zeta,\xi)
    \end{bmatrix}.
\end{align}
By linearity and the definition of the B-spline basis functions  we can suppose w.l.o.g.  $\hat{\phi}(\zeta,\xi) = \widehat{N}(\zeta) \ \xi^2,$
for a suitable $\widehat{N}$. Note that $ \widehat{B}_{j,p}^r(\xi) \in \mathcal{O}(\xi^2)$ for $\xi \rightarrow 0$ if $j\geq 3$.
For this case we study  the derivatives when  $\xi \rightarrow 0$. With \eqref{eq:derivativ limit_2} one sees
\begin{align}
\partial_{x} \phi\circ \p{F}_m(\zeta,\xi) & = \frac{{\partial}_{\zeta} \widehat{N}(\zeta) \  \xi^2}{\xi \ d(\zeta)} \ \gamma_2(\zeta)- 2\frac{\xi  \ \widehat{N}(\zeta) \ \xi}{\xi \ d(\zeta)} \ {\partial}_{\zeta}\gamma_2(\zeta) \overset{ \xi \rightarrow 0}{\longrightarrow} \ 0 .
\end{align}
But this implies directly that $\phi(x,y)$ has a well-defined $x$-derivative in the scaling center, namely $\partial_{x} \phi = 0$ in $\f{x}_0$. Analogously one gets the well-defined derivative $\partial_{y} \phi(\f{x}_0) = 0$.
Thus we can summarize that in the $m$-th patch the push-forwards of the $C^1$-smooth basis functions $\widehat{N}_{i,p}^r \cdot \widehat{B}_{j,p}^r, \ j \geq 3$ define mappings in $C^1(\overline{\o_m})$.

This means, it is justified to remove in each patch  initially  before patch coupling all the parametric basis functions $\widehat{N}_{i,p}^r  \cdot \widehat{B}_{j,p}^r$ with $j \leq 2$. But clearly, to preserve the approximation ability of SB-IGA test functions, we have to introduce new basis functions in the physical domain that determine the function value and derivatives at the scaling center. In the planar case, three additional test functions are sufficient, where we exploit the isoparametric paradigm to define preliminary test functions $\phi_{i,sc} \in C^0(\o)$ with
\begin{align*}
\phi_{1,sc}(\f{x}_0) = 1, \ \  \partial_x\phi_{1,sc}(\f{x}_0) =  \partial_y\phi_{1,sc}(\f{x}_0)= 0, \\
\phi_{2,sc}(\f{x}_0) = 0, \ \  \partial_x\phi_{2,sc}(\f{x}_0) = 1, \ \ \partial_y\phi_{2,sc}(\f{x}_0)= 0, \\
\phi_{3,sc}(\f{x}_0) = 0, \ \  \partial_x\phi_{3,sc}(\f{x}_0) = 0, \ \ \partial_y\phi_{3,sc}(\f{x}_0)= 1.
\end{align*}
Latter requirements can be easily satisfied if we use the entries of the geometry control points  as coefficients for the parametric pendants $\hat{\phi}_{i,sc}$. To be more precise, if we have
$$\p{F}_m = \sum_{j=1}^{n_2}  \sum_{i=1}^{n_1^{(m)}} \p{C}_{i,j}^{(m)} \widehat{N}_{i,p}^r  \cdot \widehat{B}_{j,p}^r, $$ then we set 
\begin{align}
\label{eq_sc_test_fun_param_1}
\hat{\phi}_{1,sc}^{(m)} \coloneqq  \sum_{j=1}^{p+1}\sum_{i=1}^{n_1^{(m)}} &  \widehat{N}_{i,p}^r  \cdot \widehat{B}_{j,p}^r,   \ \ 
\hat{\phi}_{2,sc}^{(m)} \coloneqq  \sum_{j=1}^{p+1}\sum_{i=1}^{n_1^{(m)}}  (\p{C}_{i,j}^{(m)})_1 \  \widehat{N}_{i,p}^r  \cdot \widehat{B}_{j,p}^r,  \\
&\hat{\phi}_{3,sc}^{(m)} \coloneqq  \sum_{j=1}^{p+1}\sum_{i=1}^{n_1^{(m)}}  (\p{C}_{i,j}^{(m)})_2  \ \widehat{N}_{i,p}^r  \cdot \widehat{B}_{j,p}^r \ .  \label{eq_sc_test_fun_param_3}
\end{align}
 Note that in the two-patch case we have  $\p{F}_1 = \p{F}^{(L)}(\cdot -1,\cdot), \ \p{F}_2 = \p{F}^{(R)}$.
Then the $\phi_{i,sc}$ are defined through $$(\phi_{i,sc})_{| \o_m} \circ \p{F}_m = \hat{\phi}_{i,sc}^{(m)}.$$
For example, the geometry in Fig. \ref{Fig:Controls} (a) with control points in Fig. \ref{Fig:Controls} (b) leads to the three scaling center functions in Fig. \ref{Fig:scaling_center_funs} below.

For a  $C^1$ spline $\textstyle\sum_j  c_j \ \widehat{B}_{j,p}^r (\xi)$ the values in the first mesh interval are completely determined by the terms with $j\leq p+1$. Thus using the partition of unity property we get directly that $ \hat{\phi}_{1,sc}^{(m)}=1, \ \hat{\phi}_{2,sc}^{(m)}=(\p{F}_m)_1, \ \hat{\phi}_{2,sc}^{(m)}=(\p{F}_m)_2 $ in a neighborhood of $ \{(\zeta,0) \ | \ \zeta \in [0,1] \}$ and hence  $ {\phi}_{1,sc}=1, {\phi}_{2,sc}=x, \ {\phi}_{3,sc}=y $ in a neighborhood of $\f{x}_0$. In other words, we can choose the latter three functions for the determination of values and derivatives at $\f{x}_0$, i.e. we add them to the set of basis functions used for the coupling step. Note that the continuity of the global parametrization $\p{F}$ implies the continuity of the composed  ${\phi}_{i,sc}$. 
\begin{figure}[H]	
\begin{minipage}{0.32\textwidth}
				\includegraphics[width=1\linewidth]{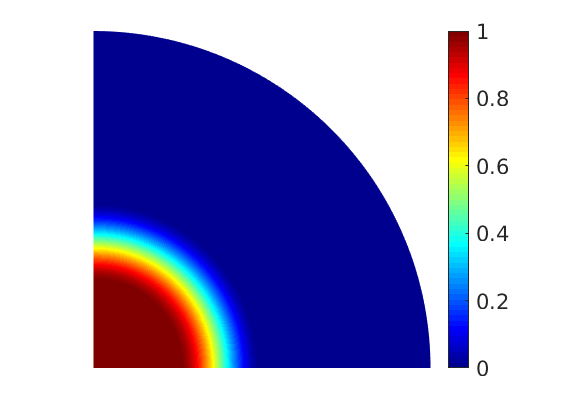}
    \caption*{\footnotesize (a) \footnotesize $\phi_{1,sc}$}
			\end{minipage}	
			\begin{minipage}{0.32\textwidth}
				\includegraphics[width=1\linewidth]{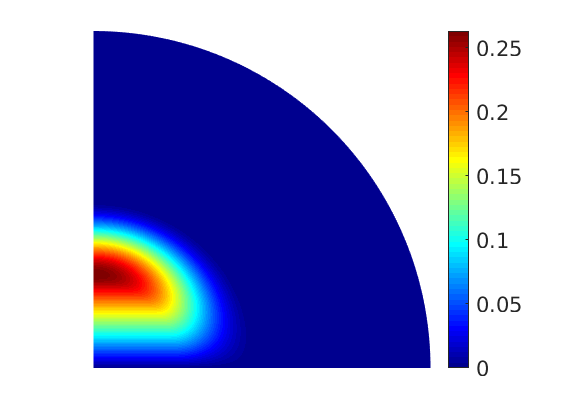}
        \caption*{ \footnotesize (b) \footnotesize  $\phi_{2,sc}$}
			\end{minipage}	
		\begin{minipage}{0.32\textwidth}
			\includegraphics[width=1\linewidth]{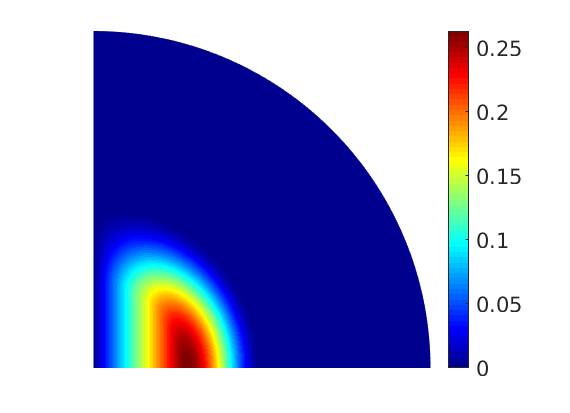}
       \caption*{ \footnotesize (c)   $\phi_{3,sc}$}
		\end{minipage}	
  \caption{ \small The auxiliary scaling center basis functions for the geometry and control net from Fig. \ref{Fig:Controls}}
  \label{Fig:scaling_center_funs}
\end{figure}
From the previous remarks, the principal idea how to determine the globally $C^1$-smooth basis functions is clear. Let $\mathcal{B}$ be the set of all uncoupled basis functions. First, one removes in each patch all the  basis functions $\widehat{N}_{i,p}^r  \cdot \widehat{B}_{j,p}^r, \ j < 3$. Then one adds the three basis functions that determine the Hermite data in the scaling center. After these two steps, we  can be sure that the remaining functions have well-defined derivatives and values in $\f{x}_0$. In particular, we deal now with a modified basis ${\mathcal{B}}^{'}$. Consequently, the actual $C^1$ coupling is then done only with the functions in ${\mathcal{B}}^{'}$. As a result, one obtains some space ${\mathcal{W}}_h^{M,1}$.   It should now we verified that indeed $${\mathcal{W}}_h^{M,1} = {\mathcal{V}}_h^{M,1} (\coloneqq {\mathcal{V}}_h^{M} \cap C^1(\overline{\o})).$$
\begin{lemma}
    It holds ${\mathcal{W}}_h^{M,1} = {\mathcal{V}}_h^{M,1}$.
\end{lemma}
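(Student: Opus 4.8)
The plan is to spell out $\mathcal{W}_h^{M,1}=\textup{span}(\mathcal{B}')\cap C^1(\overline{\o})$ — the space of all $C^1$-smooth linear combinations of the modified basis $\mathcal{B}'$ — and to prove the two inclusions separately. The inclusion ``$\mathcal{W}_h^{M,1}\subseteq\mathcal{V}_h^{M,1}$'' is the easy one: every element of $\mathcal{B}'$ is, restricted to each patch $\o_m$, a finite linear combination of the tensor functions $\widehat{N}_{i,p}^r\cdot\widehat{B}_{j,p}^r$ — trivially for the retained generators (those with $j\geq 3$), and for $\phi_{1,sc},\phi_{2,sc},\phi_{3,sc}$ by the formulas \eqref{eq_sc_test_fun_param_1}--\eqref{eq_sc_test_fun_param_3}, whose coefficients are plain real numbers; hence $\textup{span}(\mathcal{B}')\subseteq\mathcal{V}_h^{M}$ and therefore $\mathcal{W}_h^{M,1}\subseteq\mathcal{V}_h^{M}\cap C^1(\overline{\o})=\mathcal{V}_h^{M,1}$.

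For the converse, take $\phi\in\mathcal{V}_h^{M,1}$; since $\phi\in C^1(\overline{\o})$ already holds, it suffices to show $\phi\in\textup{span}(\mathcal{B}')$. Set $(a_0,a_1,a_2)\coloneqq(\phi(\f{x}_0),\partial_x\phi(\f{x}_0),\partial_y\phi(\f{x}_0))$, well defined by $C^1$-regularity at the scaling center, and let $\psi\coloneqq\phi-a_0\phi_{1,sc}-a_1\phi_{2,sc}-a_2\phi_{3,sc}$. The crux is the claim that on each patch $\hat{\psi}^{(m)}\coloneqq\psi_{|\o_m}\circ\p{F}_m$ has vanishing $\widehat{B}_{1,p}^r$- and $\widehat{B}_{2,p}^r$-layers, i.e.\ $\hat{\psi}^{(m)}\in\textup{span}\{\widehat{N}_{i,p}^r\cdot\widehat{B}_{j,p}^r\mid j\geq 3\}$; granting it, $\psi$ lies in the span of the retained generators, so $\phi=\psi+a_0\phi_{1,sc}+a_1\phi_{2,sc}+a_2\phi_{3,sc}\in\textup{span}(\mathcal{B}')$, which finishes the proof. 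I would prove the claim by assuming w.l.o.g.\ $\f{x}_0=\f{0}$, writing $\hat{\phi}^{(m)}=\sum_{i,j}c_{ij}^{(m)}\widehat{N}_{i,p}^r\widehat{B}_{j,p}^r$, and exploiting the endpoint structure of the $p$-open knot vector in $\xi$: at $\xi=0$ only the layer $j=1$ survives ($\widehat{B}_{1,p}^r(0)=1$, $\widehat{B}_{j,p}^r(0)=0$ for $j\geq 2$), while the $\xi$-derivative at $\xi=0$ feels only the layers $j=1,2$, with the $\xi$-derivative of a spline $\sum_j d_j\widehat{B}_{j,p}^r$ at $\xi=0$ equal to $\mu\,(d_2-d_1)$ for a fixed $\mu\neq 0$ (a consequence of \eqref{eq:soline_der} and $\sum_j\widehat{B}_{j,p}^r\equiv 1$). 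From $\hat{\phi}^{(m)}(\zeta,0)=\phi(\f{x}_0)$ and — via the chain rule for $\xi>0$ followed by the limit $\xi\to 0^+$ — $\partial_{\xi}\hat{\phi}^{(m)}(\zeta,0)=\nabla\phi(\f{x}_0)\cdot\partial_{\xi}\p{F}_m(\zeta,0)$ together with $\partial_{\xi}\p{F}_m(\zeta,0)=\mu\sum_i\p{C}_{i,2}^{(m)}\widehat{N}_{i,p}^r(\zeta)$ (using $\p{C}_{i,1}^{(m)}=\f{x}_0=\f{0}$), linear independence of the $\widehat{N}_{i,p}^r$ then gives $c_{i1}^{(m)}=a_0$ and $c_{i2}^{(m)}=a_0+a_1(\p{C}_{i,2}^{(m)})_1+a_2(\p{C}_{i,2}^{(m)})_2$ for every $i$. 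Carrying out the identical computation for $\phi_{1,sc},\phi_{2,sc},\phi_{3,sc}$ shows that the combination $a_0\phi_{1,sc}+a_1\phi_{2,sc}+a_2\phi_{3,sc}$ has exactly these $j=1,2$ layer coefficients on patch $m$, so they cancel those of $\phi$ and $\hat{\psi}^{(m)}$ indeed has the asserted form.

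The step I expect to be the main obstacle is precisely this coefficient bookkeeping near the scaling center: identifying which spline coefficients are detected by the function value and by the transversal derivative at $\xi=0$, and confirming that $\phi_{1,sc},\phi_{2,sc},\phi_{3,sc}$ reproduce exactly the Hermite triple $(a_0,a_1,a_2)$ of an arbitrary admissible $\phi$; the remainder is formal. In particular, no separate treatment of the interface $C^1$-coupling is required, since $\phi$ is $C^1$ across interfaces by hypothesis, so membership in $\textup{span}(\mathcal{B}')\cap C^1(\overline{\o})$ follows automatically once $\phi\in\textup{span}(\mathcal{B}')$ is known. (That the retained $j\geq 3$ generators are $C^1$ up to $\f{x}_0$ — by Sec.\ \ref{section:approx_scaling} — and that the $\phi_{i,sc}$ agree with $1,x,y$ near $\f{x}_0$ is what makes $\textup{span}(\mathcal{B}')$ a sensible object in the first place, but it plays no further role in the argument.)
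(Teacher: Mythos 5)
Your proposal is correct and follows essentially the same route as the paper's proof: subtract the Hermite data at the scaling center using the scaling-center functions (the paper does this via the equivalent $1,x,y$ and a w.l.o.g.\ reduction to vanishing value and gradient at $\f{x}_0$), then conclude from the vanishing of value and transversal derivative along $\xi=0$ that only the $j\geq 3$ layers remain, so the function lies in the span of the retained generators and hence in ${\mathcal{W}}_h^{M,1}$. Your explicit bookkeeping of the $j=1,2$ coefficient layers is just a more detailed rendering of the step the paper handles by continuity of the derivative via \eqref{eq:derivative_limit_1}.
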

\begin{proof}
It is enough to consider "$\supset$".\\
Assume there is $\phi \in {\mathcal{V}}_h^{M,1} \backslash {\mathcal{W}}_h^{M,1}$. 
    By the definition of the scaling center basis functions and the iso-parametric paradigm the three  polynomials $(x,y) \mapsto x, \ (x,y) \mapsto y, \ (x,y) \mapsto 1$ are elements of $ {\mathcal{W}}_h^{M,1}$. Thus, we can say w.l.o.g. $\phi(\f{x}_0)= \partial_{x}\phi(\f{x}_0) = \partial_y \phi(\f{x}_0)=0$. Writing $\hat{\phi}^{(m)}= \phi \circ \p{F}_m$ this implies on the one hand obviously 
   $ \hat{\phi}^{(m)}(\zeta, 0) = 0 , \forall \zeta$. And, on the other hand by the continuity of the derivative  \eqref{eq:derivative_limit_1} implies ${\partial}_{\zeta}\hat{\phi}^{(m)}(\zeta,0)=\partial_{\xi}\hat{\phi}^{(m)}(\zeta,0)=0, \forall \zeta.$\\
   But then it follows $\phi \in {\mathcal{W}}_h^{M,1}.$
\end{proof}
After we have seen how the scaling center issue can be handled, we have to face now the coupling conditions. This is content of the next section.

\subsection{Enforcing the coupling conditions}

We shortly mention an adaption of the approach of \cite{Collin2016AnalysissuitableGM}  to compute the actual $C^1$-regular test functions. We want to emphasize that there are also other ways to enforce the coupling conditions, e.g. a least squares ansatz. Nevertheless, in view of clarity we restrict ourselves to one coupling procedure.  Furthermore we explain the steps for a two-patch situation like Fig. \ref{Fig:2}.

We apply similar steps as in \cite{Collin2016AnalysissuitableGM} to our context which can be summarized as follows.\\
Initially, we start with the set of all the non-coupled basis functions $$\mathcal{\tilde{B}} \coloneqq \{ \widehat{N}_{i,p}^r(\cdot +1)  \cdot \widehat{B}_{j,p}^r  \ | \ 1 \leq i \leq n_1^{(L)} , \ 1 \leq j \leq n_2\} \cup \{ \widehat{N}_{i,p}^r \cdot \widehat{B}_{j,p}^r  \ | \ 1 \leq i \leq n_1^{(R)} , \ 1 \leq j \leq n_2\} ,$$  ${\mathcal{B}}\coloneqq \mathcal{\tilde{B}} \circ \p{F}^{-1}$ respectively. Here we extend an uncoupled basis function  to $\tilde{\Omega}$ by setting it to zero on the remaining patch. Further, below we write ${\mathcal{\tilde{B}}}^k$ for the set of parametric basis functions after the $k$-th  step of the coupling procedure.  The hat " $\tilde{\cdot}$ " is used if we consider the functions in the parametric domain $\tilde{\o}$ and we drop it if we mean the corresponding obvious push-forwards.
\begin{enumerate}
	\item Remove all basis functions corresponding to $\widehat{N}_{i,p}^r(\cdot +1)  \cdot \widehat{B}_{j,p}^r, \ \widehat{N}_{i,p}^r  \cdot \widehat{B}_{j,p}^r$ with  $j \leq 2$.
	We have now a new basis $\mathcal{\tilde{B}}^{1}$.
\end{enumerate} 
Latter step is done in order to remove the problematic test functions near the scaling center. 
\begin{itemize}
	\item[2.] The remaining test functions in $\mathcal{\tilde{B}}^{1}$ are coupled continuously which leads to a modified basis $\mathcal{\tilde{B}}^{2}$.
\end{itemize}
The $C^0$ coupling is easily achieved due to Assumption \ref{Assumption:coupling}.  This means we once more reduce again the number of basis functions.

\begin{itemize}
	\item[3.]  Incorporate the three scaling center test functions $\phi_{i,sc}$ determined by \eqref{eq_sc_test_fun_param_1}-\eqref{eq_sc_test_fun_param_3}; 	$\mathcal{\tilde{B}}^{2} \longrightarrow \mathcal{\tilde{B}}^{3}.$
\end{itemize}
This is, we can handle values and derivatives at the scaling center. 
\begin{itemize}
	\item[4.] Remove all basis functions that violate possible (problem-dependent) boundary \\ conditions; $\mathcal{\tilde{B}}^{3}$ $ \longrightarrow \mathcal{\tilde{B}}^{4}.$  
\end{itemize}
All  four steps can be summarized by means of a  transformation matrix  $M_0 \in \mathbb{R}^{N \times N_4}, \ N = \#  \mathcal{\tilde{B}}, \ N_4 = \#\mathcal{\tilde{B}}^{4}$ that connects the  new global continuous basis functions with   the  uncoupled ones. \\
Now we are able to face the actual $C^1$ coupling.
\begin{itemize}
	\item[5.]  One computes the normal derivative jumps across the interface, more precisely the derivative jump matrix $(M_J)_{i,j} = \langle [\![ \nabla\phi_i^4 \cdot \f{n}_L ]\!] ,[\![ \nabla\phi_j^4 \cdot \f{n}_L ]\!] \rangle_{L^2(\Gamma)}, $ $\  \mathcal{{B}}^{4} = \{ \phi_1^4, \phi_2^4, \dots , \phi_{N_4}^4 \}$. Here $\f{n}_L$ is the unit outer normal vector of the interface side w.r.t. to the left patch; see Fig. \ref{Fig:2}. And  $[\![  g ]\!]$ stands for the jump value of $g$ across the interface $\Gamma$.
\end{itemize}

\begin{itemize}
	\item[6.] The global $C^1$ test functions are then obtained by the  the null space matrix $M_1 = \textup{null}(M_J)$. Thereby we get the wanted basis function set $\mathcal{{B}}^{6}$.
\end{itemize}

This means if one has a weak linear formulation of some problem, the assembly of  matrices of the form $(A)_{i,j} = b( \phi_i^6, \phi_j^6), $ where $b(\cdot,\cdot)$ is a proper bilinear form and $ \mathcal{{B}}^{6} = \{ \phi^6_1, \phi^6_2, \dots , \phi^6_{N_6} \}$, can be computed from the uncoupled system matrix $(\tilde{A})_{i,j} = b( \phi_i,  \phi_j), \  \mathcal{{B}}= \{ \phi_1, \phi_2, \dots , \phi_{N} \} $ just by matrix multiplications, namely
$$A=M_1^T\ M_0^T \ \tilde{A} \ M_0 \ M_1.$$

\begin{remark}

Applying the above steps we can calculate $C^1$-regular basis functions in the two-patch case. In case of a SB-parametrization which consists of more than two patches the coupling is done for each interface according to the mentioned approach. However, the definition of the additional scaling center test functions  only has to be done once.
\end{remark}

\begin{remark}
Later in the numerics part, we use  Gauss quadrature rules to compute the matrix $M_J$ and other appearing integrals. In particular, we do not need to evaluate the basis functions at the singular point. Thus, the computation of the matrix $M_J$  is well-defined.
\end{remark}

\section{Remarks on generalizations}
    \label{section:generelizations}
	In the first part, the $C^1$-coupling was explained for the two-patch case, but as already mentioned the approach can be generalized to situations with more patches. Hereto one enforces the $C^1$ coupling at each interface, but in case of $c_2=0$, i.e. classical SB-IGA,  the  scaling center test functions $\phi_{i,sc}$ are defined once for the complete multipatch domain.

	\subsection{Non-star-shaped domains}
	\label{subsec:non-star-domains}
	Although we are now able to handle various geometries we are still limited to star-shaped domains. But frequently in applications the computational domain is not star-shaped. Nevertheless, in a special situation the $C^1$-coupling can be generalized  without loosing the (quasi) AS-$G^1$ structure. Namely, lets assume a decomposition of the domain $\o$ into star-shaped subdomains $\o_m$, where the interfaces between the different subdomains are straight lines; see for example Fig. \ref{Fig:non-star-shaped-domain}.\\
	Then we know how to couple the patches within each subdomain and the new interfaces between the star-shaped subdomains again  fit to the AS-$G^1$    framework since the two elements corresponding to that interface have a w.l.o.g. bilinear parametrization (see Fig. \ref{Fig:non-star-shaped-domain}). And due to Proposition 3 in \cite{Collin2016AnalysissuitableGM}  we have that bilinear multipatch parametrizations are AS-$G^1$. This property becomes evident in the numerical examples shown in the last part of the article.

	\begin{figure}[H]
 \begin{minipage}{0.49\linewidth}
 \hspace{2cm}
    \begin{tikzpicture}[scale=1.2]
		\draw [thick,fill=gray,opacity=0.2] plot [smooth, tension=0.6] coordinates {(0,0) (0,2) (1,1.8) (2,2) (2,0) (2.2,-0.5) (3,-0.6) (3.5,2) (4.5,2) (4.5,1) (4,0) (3.7,-1) (3,-1.5) (2,-1.3) (0.4,-1) (0,0)};
		\draw [thick] plot [smooth, tension=0.6] coordinates {(0,0) (0,2) (1,1.8) (2,2) (2,0) (2.2,-0.5) (3,-0.6) (3.5,2) (4.5,2) (4.5,1) (4,0) (3.7,-1) (3,-1.5) (2,-1.3) (0.4,-1) (0,0)};

		\draw[brown,thick] (0,0) to (2,0);
		\draw[brown,thick] (1.3,-1.2) to (2.25,-0.55);
		\draw[brown,thick] (3,-1.5) to (2.7,-0.8);
		\draw[brown,thick] (3.1,-0.3) to (3.9,-0.3);

		\draw[dashed] (1,1)--  (0,0);
		\draw[dashed] (1,1)--  (2,0);
		\draw[dashed] (1,1)--  (2,2);
		\draw[dashed] (1,1)--  (0,2);
		
		\draw[dashed] (1,-0.5)--  (0,0);
		\draw[dashed] (1,-0.5)--  (2,0);
		\draw[dashed] (1,-0.5)--  (1.3,-1.2);
		\draw[dashed] (1,-0.5)--  (2.25,-0.55);
		
		\draw[dashed] (2.3,-1)--  (3,-1.5);
		\draw[dashed] (2.3,-1)--  (2.7,-0.8);
		\draw[dashed] (2.3,-1)--  (1.3,-1.2);
		\draw[dashed] (2.3,-1)--  (2.25,-0.55);

		\draw[dashed] (3.37,-0.9)--  (3,-1.5);
		\draw[dashed] (3.37,-0.9)--  (2.7,-0.8);
		\draw[dashed] (3.37,-0.9)--  (3.1,-0.3);
		\draw[dashed] (3.37,-0.9)--  (3.9,-0.3);
		
		\draw[dashed] (3.76,1)--  (4.5,2);
		\draw[dashed] (3.76,1)--  (3.5,2);
		\draw[dashed] (3.76,1)--  (3.1,-0.3);
		\draw[dashed] (3.76,1)--  (3.9,-0.3);

		\draw[fill=red] (1,1) circle (1.9pt);
		\draw[fill=red] (1,-0.5) circle (1.9pt);	
		\draw[fill=red] (2.3,-1) circle (1.9pt);	
		\draw[fill=red] (3.37,-0.9) circle (1.9pt);
		\draw[fill=red] (3.76,1) circle (1.9pt);

		\end{tikzpicture} 
 \end{minipage}
 \hspace{0cm}
 \begin{minipage}{0.49\linewidth}
		\centering
		\begin{tikzpicture}[scale=0.7]
		\draw (2,3) -- (3.5,0);
		\filldraw[fill=gray, opacity = 0.2] (2,3) --(3.5,0) -- (2,0) -- (1,1.5) -- (2,3);
		\filldraw[fill=gray, opacity = 0.2] (2,3) --(3.5,0) -- (4,0.5) -- (4,2) -- (2,3);
		\draw[dashed] (2,3) -- (0,0);
		\draw[dashed] (3.5,0) -- (0,0);
		\draw[dashed] (2,3) -- (5,1.5);
		\draw[dashed] (3.5,0) -- (5,1.5);
		\node at (2.78,1) {$\Gamma$};
		\node[left] at (0,0) { \small $\f{x}_0$};
		\node[above] at (5.55,1.4) { \small  $\f{x}_1$};
		\node at (2.78,1) {$\Gamma$};
		\draw[fill = red] (0,0) circle(3pt); 
		\draw[fill = red] (5,1.5) circle(3pt); 
		\end{tikzpicture}
 \end{minipage}
		\caption{ \small A non-star-shaped domain which is divided into star-shaped subdomains that have non-curved interfaces. Such multipatch structures are still suitable for a $C^1$ coupling. If the interface between SB-parametrizations with two different scaling centers is a straight line, then w.l.o.g. the patches meet as two bilinear patches and the parametrization is quasi AS-$G^1$, i.e. AS-$G^1$ except at the singular points.}
		\label{Fig:non-star-shaped-domain}
	\end{figure}
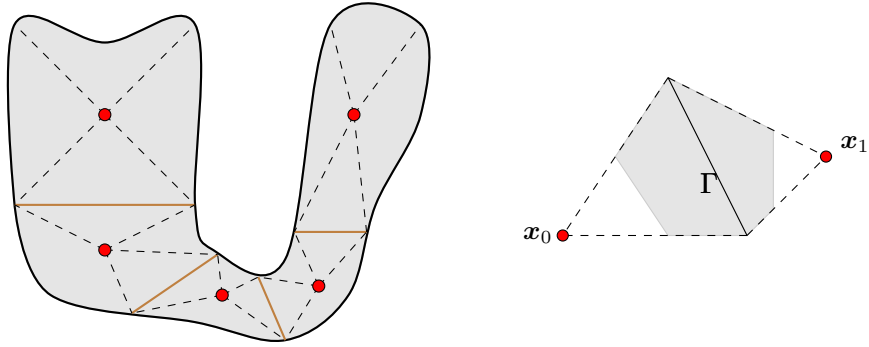

\subsection{Application to trimmed domains}
 Trimming, i.e. the cut off of domain parts utilizing trimming curves or surfaces, is a fundamental operation within CAD and is applied frequently. For more information and a detailed study we refer to \cite{Marussig2018}. But as explained in latter reference the  implementation of general trimming procedures is quite complicated and for  different approaches there arise  several difficulties. Especially the stable integration  over trimmed geometries might be an issue. SB-IGA with its boundary representation is suitable for the consideration of trimming and we want to explain here briefly, how we can handle $C^1$-coupling if  we incorporate trimming. The basic idea is to interpret the trimmed domain as a new computational domain which is defined by appropriate new boundary curves and to apply the coupling approach from above.  \\
  First, we explain the procedure by means of a simple example and concerning more complicated situations we add some remarks later.\\
  Let  the boundary curves $\gamma^{(m)}, \ m=1,\dots,n$ of the untrimmed domain $\Omega$ be given; see the blue square boundary in Fig. \ref{Fig:Trimming_1} (a).
  Further, let  a trimming curve $\gamma_{T}$ be given  such that there are two intersections with the boundary  $\partial \o$, e.g.  $\gamma^{(1)}(\zeta^{(1)})= \gamma_T(s_{1})$ and $\gamma^{(2)}(\zeta^{(2)})= \gamma_T(s_2)$ for proper $\zeta^{(k)} \in [0,1]$. Then, we have to check which curve segments belong to the boundary of the wanted trimmed domain $\Omega_T$. For the example  in Fig. \ref{Fig:Trimming_1} (a) we assume that the curve segments  $$
      \tilde{\gamma}_T,  \  \tilde{\gamma}^{(1)} , \  \tilde{\gamma}^{(2)} \  \ \textup{and} \ \ {\gamma}^{(3)}, \ {\gamma}^{(4)}  
$$ define the new boundary of $\Omega_T$ .   \\
If we parameterize now the modified boundary parts $\tilde{\gamma}_T,  \  \tilde{\gamma}^{(1)} , \  \tilde{\gamma}^{(2)}$ utilizing standard NURBS or B-splines, i.e. e.g.  $\tilde{\gamma}_T \in (N_p^r)^2$ we are again in the situation of classical untrimmed SB-IGA provided that we have a suitable scaling center for the trimmed domain; see Remark \ref{remark:trimming_1} below. The exact boundary representation of the trimmed domain is always possible which can be seen as follows.\\
Let $\gamma \colon [0,1] \rightarrow \mathbb{R}^2$ be a NURBS curve in $(N_p^r)^2$ and $0 \leq \zeta^{(1)} < \zeta^{(2)} \leq 1$.  Inserting knots at $\zeta^{(1)}, \zeta^{(2)}$ s.t. the multiplicity of the knots $\zeta^{(1)},\zeta^{(2)}$ is $p+1$,  we  obtain   NURBS with   discontinuities  at the mentioned two knots  that represent  the original curve $\gamma$; cf. \cite{IGA1}. Consequently, each new NURBS basis function is non-zero only in one of the three intervals $$I_1 = [0, \zeta^{(1)}], \ \ I_2 = [\zeta^{(1)}, \zeta^{(2)}], \ \ I_3 = [ \zeta^{(2)},0], \ \ \textup{see Fig. \ref{Fig:Trimming_4}} . $$  Hence, choosing the appropriate NURBS and after a simple re-scaling  it is easy to see that  we can describe  each of the three curve segments $\gamma_{|I_i}$   with NURBS curves such that they start and end in points in $\{ \gamma(0), \ \gamma(\zeta^{(1)}), \ \gamma(\zeta^{(2)}), \ \gamma(1) \}$. In other words, if the intersection  parameter values in the trimming example above are known, we just have to apply some knot insertion steps to find exact parametrizations $\tilde{\gamma}^{(m)} \colon [0,1] \rightarrow \partial \Omega_T$ of the trimmed domain boundary curves. 

Clearly, the trimming example in Fig. \ref{Fig:Trimming_1} is very simple and in general situations some issues might occur. However, most problems can be handled as the next remarks indicate.

\begin{remark}
\label{remark:trimming_1}
  On the one hand, if we have the boundary curves of the trimmed domain, we need to compute a suitable scaling center, but the trimmed domain may loose its star-shape structure. Nevertheless, the trimmed domain can be partitioned into smaller star-shaped regions. 
  We propose a naive ansatz, namely we divide the trimmed domain via straight cut lines into smaller star-shaped blocks in order to maintain the analysis-suitability. In more detail, we repeat trimming steps with straight lines but incorporate all the subdomains arising from the trimming with that straight lines. This simple partition approach is illustrated in Fig. \ref{Fig:Trimming_2}. There we have the square domain with a trimming curve that leads to a non-star-shaped new domain. But the application of an division step with a proper straight line leads to a two star-domains for which scaling centers can be chosen, see Fig. \ref{Fig:Trimming_2} (b).
\end{remark}

  \begin{remark}
      On the other hand, there could be  more than $2$ intersection points between trimming curve and original boundary. Then, the overall trimming still can be done in an iterative manner, i.e. we cut off successively simple  regions. Also the situation of a trimming curve completely defined in the interior of the domain is manageable. One  partitions  the  trimmed domain via straight lines into star-shaped blocks and handles each of this subdomains with SB-IGA. We refer to Fig. \ref{Fig:Trimming_3} for an illustration.
  
  \end{remark}

  \begin{remark}
      We assume always that the SB-mesh boundary is given by NURBS curves, i.e. curves of the form $\gamma \colon [0,1] \rightarrow \mathbb{R}^2$. 
      Then, experiments not shown here indicate that if one uses exact boundary representations of the trimmed domain based on knot insertion together with the iso-parametric paradigm one might obtain SB meshes which are not optimal. To be more precise, one might get meshes with very thin elements  which are not optimal in the sense of condition. Maybe if we only  approximate the boundary curves with NURBS, e.g. with a $L^2$ projection ansatz,  we might relax latter issue. 
  \end{remark}

\begin{figure}[H]	
	\begin{tikzpicture}[scale=1.2]    
	    \node (eins) at (0,0) {\includegraphics[width=0.62\linewidth]{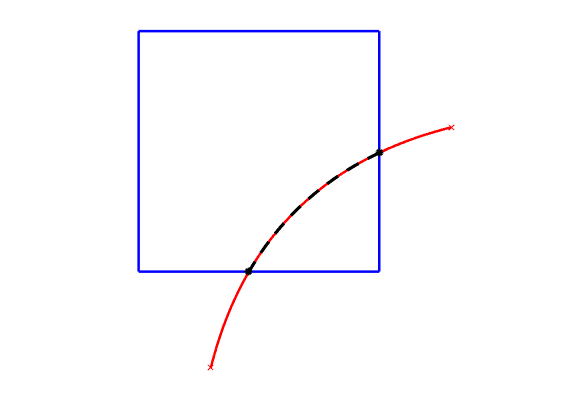}};
     \draw[fill=gray, opacity=0.2,shift={(-1.99,-0.87)}] (-0.02,0) rectangle (3.28,3.28);
         \node[red] at (1.9,1.2) {$\gamma_T$};
	     \node[blue] at (1.6,-0.2) {$\gamma^{(1)}$};
	     \node[blue] at (0.4,-1.15) {$\gamma^{(2)}$};
	     \node[blue] at (-2.3,0.5) {$\gamma^{(3)}$};
	     \node[blue] at (-0.2,2.66) {$\gamma^{(4)}$};
	     \node at (-0.2,0.1) {$\tilde{\gamma}_T$};
	      \draw[line width=0.8mm,dashed, green] (-1.95,-0.89) --(-0.5,-0.89);
	     \draw[line width=0.8mm,dashed, green] (1.132+0.16,0.77) --(1.132+0.16,2.4);
	     \node[green!50!black] at (-1.25,-1.15) {$\tilde{\gamma}^{(2)}$};
	       \node[green!50!black] at (1.7,1.8) {$\tilde{\gamma}^{(1)}$};
         \node (zwei) at (-0.45+6,0.69) {\includegraphics[width=0.437\linewidth]{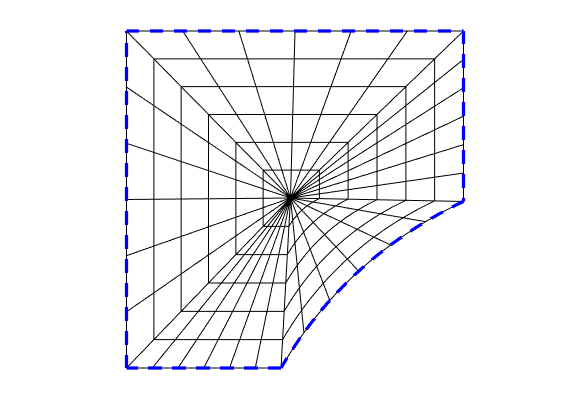}};
         \node at (-1,+3.1) {\footnotesize (a) Original domain with trimming curve $\gamma_T$};
        \node at (6,3.1) {\footnotesize (b) The trimmed domain.};
	\end{tikzpicture}
	\caption{ \small The SB-IGA ansatz is suitable to study trimmed geometries. One computes the intersections and the new boundary curves utilizing a knot insertions to decouple the boundary curves in without changing the geometry. Above the original $4$-patch domain is trimmed and as a result we get a $5$-patch geometry.}
	\label{Fig:Trimming_1}
\end{figure}

\begin{figure}[H]	
\centering
	\begin{tikzpicture}[scale=1.2]    
	    \node (eins) at (-0.5,0.7) {\includegraphics[width=0.4\linewidth]{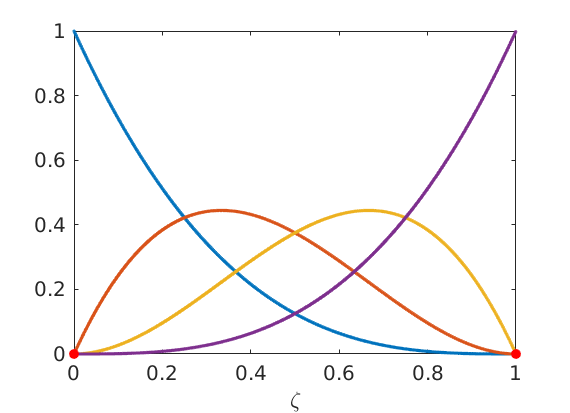}};
         \node (zwei) at (-0.45-0.3+6,0.69) {\includegraphics[width=0.4\linewidth]{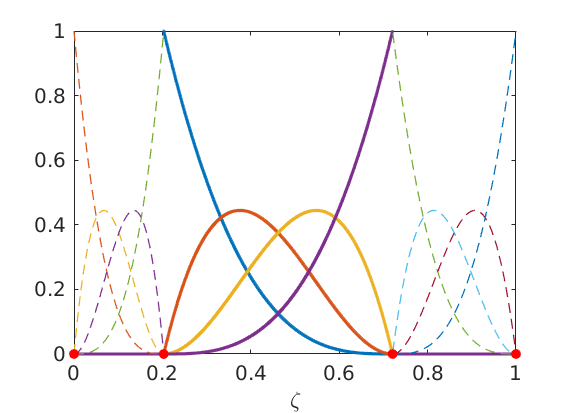}};
         \node at (-0.8,-1.5) {\footnotesize (a) Original basis NURBS};
        \node at (6-0.55,-1.5) {\footnotesize (b) NURBS after knot insertions };
        \draw[fill=black] (4.18,-0.61) circle (1.33pt);
        \draw[fill=black] (4.18+2.0252,-0.61) circle (1.33pt);
	\end{tikzpicture}
  \caption{ \small For the trimming example from Fig. \ref{Fig:Trimming_1} we have the NURBS on the left to describe the trimming curve $\gamma_T$. The intersections of $\gamma_T$ with the boundary curves correspond to the parameter values indicated with the black dots on the right. If we insert knots at these values $\zeta^{(1)}, \ \zeta^{(2)}$ we introduce new NURBS which are suitable for a decoupling and the extraction of the relevant part of the trimming curve that is related to the highlighted NURBS in the right figure.}
  \label{Fig:Trimming_4}
\end{figure}

\begin{figure}[H]	
	\begin{tikzpicture}[scale=1.2]    
	    \node (eins) at (0.3,0.7) {\includegraphics[width=0.49\linewidth]{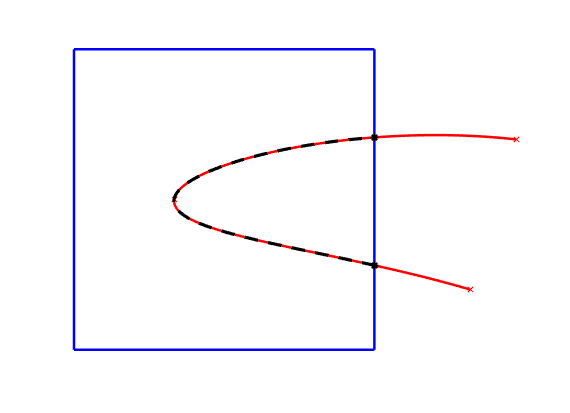}};
     \draw[fill=gray, opacity=0.2,shift={(-1.99,-0.87)}] (0,0) rectangle (3.28,3.28);
         \node (zwei) at (-0.45-0.3+6,0.69) {\includegraphics[width=0.4\linewidth]{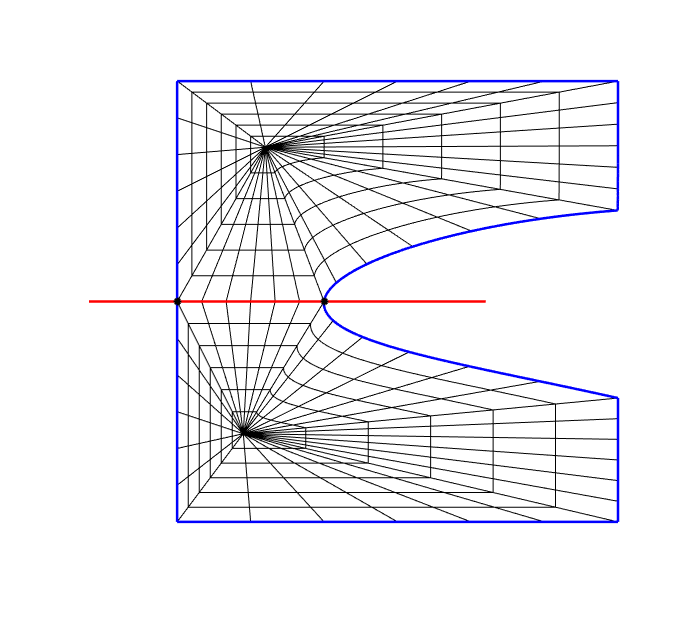}};
         \node at (-1,-1.5) {\footnotesize (a) Original domain with trimming curve $\gamma_T$};
        \node at (6,-1.5) {\footnotesize (b) Trimmed domain with cut line.};
	\end{tikzpicture}
\caption{ \small Here we see a  trimming curve that would lead to a non-star domain. We can divide the trimmed domain into two star-shaped parts using a straight cut line.   }
	\label{Fig:Trimming_2}
\end{figure}

\begin{figure}[H]
\centering
\begin{minipage}{0.32\textwidth}
	\begin{figure}[H]
		\centering
		\includegraphics[width=0.8\textwidth]{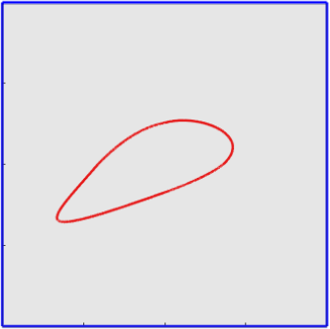}
		\caption*{\footnotesize{(a) Original domain with trimming curve $\gamma_T$}}
	\end{figure}
\end{minipage}
\begin{minipage}{0.32\textwidth}
	\begin{figure}[H]
		\centering
		\includegraphics[width=0.8\textwidth]{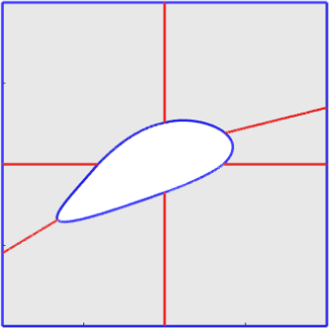}
		\caption*{\footnotesize{(b) Trimmed domain with cut lines.}}
	\end{figure}
\end{minipage}
\begin{minipage}{0.32\textwidth}
	\begin{figure}[H]
		\centering
		\includegraphics[width=0.8\textwidth]{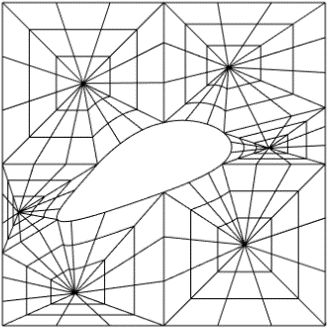}
		\caption*{\footnotesize{(c) Possible SB-mesh after the trimming.}}
	\end{figure}
\end{minipage}
    \caption{ \small Here we see a  trimming curve that would lead to a non-star domain. We can divide the trimmed domain into two star-shaped parts using a straight cut line.  }
    \label{Fig:Trimming_3}
\end{figure}

\section{Numerical examples}
\label{section:numerical_examples}
In this section, the proposed method is investigated regarding its performance by means of several numerical experiments. There are different application examples and methods where $C^1$ regularity of test and ansatz functions is required for the numerical calculations. Here we want to study the application in structural mechanics of Kirchhoff plate theory for which we can exploit the increased inter-patch regularity ansatz from above. The approach is based on the assumptions made by Kirchhoff  who states that "planes perpendicular to the mid surface will remain plane and perpendicular to the deformed mid surface".  The chosen examples demonstrate the power of the theory in the context of scaled boundary isogeometric analysis and contain untrimmed and trimmed examples, both. The approach has been implemented utilizing MATLAB \cite{MATLAB:2022} in combination with the open source package GeoPDEs \cite{geopdesv3}. It is especially designed for the solution of partial differential equations in the framework of isogeometric analysis. 
At first, the Kirchhoff plate formulation is stated briefly. Afterwards, the proposed method is checked on various examples to outline the features and characteristics of SB-IGA plates. The reference solutions are either obtained by the analytical solution, if available, or by results from literature. 
\subsection{Kirchhoff plate theory}
Kirchhoff plates determine a two-dimensional fourth-order boundary value problem governed by the bi-Laplace operator. The description of the formulation below follows the notation and derivation presented in \cite{Reali2015} and \cite{Ciarlet2002}. We consider domains $\Omega \subset \mathbb{R}^2$ that have a sufficient smooth boundary $\partial\Omega = \Gamma$ such that the unit normal vector $\mathbf{n}$ is well-defined. Further, the boundary is partitioned into parts for the specification of the energetically conjugate deflections and shear forces $\Gamma = \overline{\Gamma_u \cup \Gamma_Q}$ and rotations and bending moments $\Gamma = \overline{\Gamma_{\phi} \cup \Gamma_M}$. Further, we suppose  that $\Gamma_u \cap \Gamma_Q = \varnothing$ and $\Gamma_{\phi} \cap \Gamma_M = \varnothing$, respectively. The strong form of the Kirchhoff plate formulation is stated as 
\begin{align}
    \Delta^2 u &  = \frac{g}{D} & &\text{in}  \quad \Omega\\ 
    u & = u_{\Gamma} & &\text{on}  \quad \Gamma_u \label{Defor}\\ 
    - \nabla u \cdot \mathbf{n} & = \phi_{\Gamma}  & &\text{on} \quad \Gamma_{\phi} \label{Rot}\\ 
    D\left(\nabla \left(\Delta u\right) + (1-\nu) \boldsymbol{\Psi}(u) \right) \cdot \mathbf{n} & = Q_{\Gamma} & &\text{on} \quad \Gamma_{Q} \label{ShearForc}\\
    \nu D \Delta u + (1-\nu) D \mathbf{n} \cdot (\nabla \nabla u) \mathbf{n} & = M_{\Gamma} & &\text{on} \quad \Gamma_{M} \label{BendMom}.
\end{align}
with $\nabla(\cdot)$ as the gradient operator, $\Delta(\cdot)$ the Laplace operator and $\boldsymbol{\Psi}(\cdot)$ as the third order differential operator
\begin{equation}
    \boldsymbol{\Psi}(\cdot) = \left[ \partial_{xyy}(\cdot) ,\, \partial_{xxy}(\cdot)\right]^T.
\end{equation}
Moreover, $u$ is the deflection of the plate, $g$ the load per unit area and $u_{\Gamma}$, $\phi_{\Gamma}$, $M_{\Gamma}$ and $Q_{\Gamma}$ the prescribed deflections, rotations, bending moments and shear forces acting on the boundary. $D$ denotes the bending stiffness consisting of the thickness $t$, the Poisson ratio $\nu$ and the Young modulus $E$. As $t$, $\nu$ and  $E$ are assumed to be constant over the domain, yielding an isotropic, homogeneous material, the bending stiffness is defined as
\begin{equation}
    D = \frac{Et^3}{12(1-\nu^2)}.
\end{equation}
The classical primal weak form for the computation of approximate solutions has the form
\begin{equation}
    \text{Find}\quad a(u_h,v_h) = F(v_h), \qquad \forall v_h \in V_h
\end{equation}
for a suitable test function space $V_h$ that depends on the boundary conditions. Moreover, $a(u_h,v_h)$ is a bilinear form defined as
\begin{equation}
    a(u_h,v_h) = \int_{\Omega} D \left[\left(1-\nu \right) \nabla (\nabla v_h) : \nabla (\nabla u_h) + \nu \Delta v_h \Delta u_h \right] \mathrm{d}x \mathrm{d}y.
\end{equation}
Furthermore, the linear functional $F$ is denoted as
\begin{equation}
    F(v_h) = \int_{\Omega} g v_h \, \mathrm{d}x \mathrm{d}y + \int_{\Gamma_M} M_{\Gamma} \frac{\partial v_h}{\partial \mathbf{n}}\mathrm{d}\gamma + \int_{\Gamma_{Q}} Q_{\Gamma} v_h \mathrm{d}\gamma. 
\end{equation}
We remark that in  the numerical tests no external shear forces $Q_{\Gamma}$ and bending moments $M_{\Gamma}$ are applied except for the L-bracket \ref{L-bracket}. Moreover, the boundary conditions for $u_{\Gamma}$ \eqref{Defor} and $\phi_{\Gamma}$ \eqref{Rot} need to be enforced strongly, while the conditions of $Q_{\Gamma}$ \eqref{ShearForc} and $M_{\Gamma}$ \eqref{BendMom} are natural boundary conditions. Provided proper boundary conditions and suitable $V_h \subset H^2(\o)$, the conditions of the Lax-Milgram theorem are satisfied and  the weak form has a unique solution $ u_h \in V_h$. In the examples below, the discretization of the solution field is performed by the previously defined SB-IGA test functions including the coupling approach at the patch boundaries, which ensure $C^1$-continuity within the whole domain that is required for the plate formulation. In other words, the discrete spaces are of the form $V_h \subset {\mathcal{V}}_h^{M,1}$; see \eqref{eq:definition_coupled_multipatch}, since boundary conditions are taken into account. In the following examples, we mean mesh size $h$ if the underlying parametric meshes for each patch have $1/h$ equidistant subdivisions with respect to both parametric coordinate directions.
\subsection{Smooth solution on a square plate}
\label{subsection:smooth_solution}
At first, the plate formulation is checked on its general performance. A square plate of $\Omega = [-0.5,0.5]^2$ is subjected to a smoothly distributed source function $g$ that is chosen such that the exact solution is $ u=\cos(\pi  x)^2 \, \cos(\pi y)^2$. A plate of thickness $t \approx 0.1063$ is considered of elastic material with Young's modulus $E = 10^4$ and Poisson's ratio $\nu = 0.0$, such that the flexural stiffness $D = 1$. We have clamped boundary conditions and the domain is discretized with four SB-IGA patches. The scaling center is placed with an offset of the plate center, namely $x_{off} =-0.15$ and $y_{off} =0.1$, to demonstrate its approximation behaviour for non-symmetric meshes. Fig. \ref{fig:Ex1_Mesh} exemplary shows the mesh for $h=1/4$ and the corresponding deformation plot.
\begin{figure}[H]
\begin{minipage}{0.44\textwidth}
	\begin{figure}[H]
		\centering
        \includegraphics[width=\textwidth]{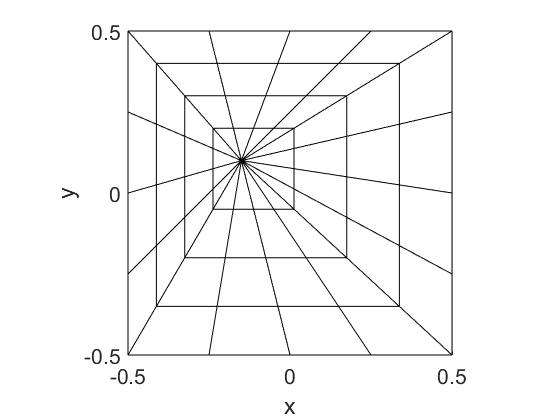}
		\caption*{\footnotesize{(a) Mesh}}
	\end{figure}
\end{minipage}
\begin{minipage}{0.44\textwidth}
	\begin{figure}[H]
		\centering
        \includegraphics[width=\textwidth]{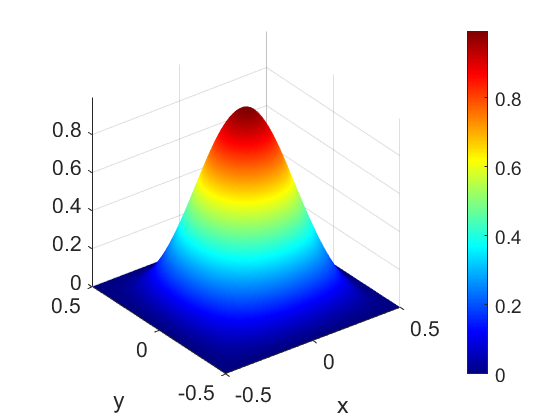}
		\caption*{\footnotesize{(b) Deformation plot}}
	\end{figure}
\end{minipage}
    \centering
    \caption{ \small Example of the smooth solution on a square plate. On the left side, the underlying mesh for $h = 1/4$ is shown. On the right, the deformation plot of the problem for the corresponding mesh is depicted, where $p=3$ and $r=1$.}
    \label{fig:Ex1_Mesh}
\end{figure}
The numerical solution is evaluated with respect to the $H^2$ seminorm and $L^2$ norm that are computed for third, fourth and fifth  order basis functions, where the error norms are defined as
\begin{flalign}
    | u - u_h |_{H^2(\Omega)} \quad \text{for the} \: H^2\text{-seminorm} \\
    || u - u_h ||_{L^2(\Omega)} \quad \text{for the} \: L^2\text{-norm}.
\end{flalign}
For the corresponding convergence rates, we refer to  Fig. \ref{fig:StabilityEx1}.
\begin{figure}[H]
\begin{minipage}{0.47\textwidth}
\begin{tikzpicture}
	\small \begin{loglogaxis} [
		width = \textwidth,
		xlabel={ \footnotesize Mesh size $h$},
		xmin=0.05, xmax=0.5,
		ymin=10^-5, ymax=4*10^0,
		xtick={0.0625,0.1,0.25,0.5},
		log x ticks with fixed point,
		ytick={10^-8,10^-7,10^-6,10^-5,10^-4,10^-3,10^-2,10^-1,10^0},
		legend pos=south east,
		legend columns = 3,
		xmajorgrids=true,
		ymajorgrids=true,
		grid style=dashed,
		]
		\addplot
		coordinates {
           (0.5, 4.88477522305725387)
			(0.25, 1.39926367617716552)
			(0.1666666, 0.524065706167172296)
			(0.125,  0.268423770832067199)
			(0.1,  0.162553751539400687)
			(0.08333333, 0.108930404294319091)
			(1/14, 0.0780851124262938873)
			 (1/16, 0.0587248132652208252)
		};
		\addlegendentry{p=3, r=1},
		\addplot
		coordinates {
    	(0.5, 1.36342570030805166) 
			(0.25,  0.151528227056671821)
			(0.166666666, 0.0408440254721990498)
			(0.125, 0.0164572484123510186)
			(0.1, 0.0082132392413997897)
			(0.08333333, 0.00467560125681835709)
		};
		\addlegendentry{p=4, r=1},
	    \addplot[color=darkgray,mark=triangle*]
		coordinates {
         (0.5		, 0.185702087549456607)
		(0.25		,0.013083013681589593)
		(0.166666666, 0.00258462255402451676)
		(0.125		,0.000799189017158653467)
        };
		\addlegendentry{p=5, r=1},
		\addplot[color=blue, mark size=1.5pt,mark=none, dashed]
		coordinates {
			(0.5, 0.5^2*10)
			(1/16, 1/16^2*10)};
		\addplot[color=red, mark size=1.5pt,mark=none, dashed]
		coordinates {
			(0.5, 0.5^3*5)
			(1/12, 1/12^3*5)};
		\addplot[color=darkgray, mark size=1.5pt,mark=none, dashed]
		coordinates {
			(0.5, 0.5^4*1.8)
			(1/8, 1/8^4*1.8)};
			\legend{p=3,p=4,p=5,$\mathcal{O}(h^2)$,$\mathcal{O}(h^3)$,$\mathcal{O}(h^4)$}
	\end{loglogaxis}
\end{tikzpicture}
\caption*{\footnotesize{(a) $|u - u_h|_{H^2(\Omega)}$}}
\end{minipage}
\begin{minipage}{0.47\textwidth}
\begin{tikzpicture}
	  \small \begin{loglogaxis} [
		width = \textwidth,
		xlabel={\footnotesize Mesh size $h$},
		xmin=0.05, xmax=0.5,
		ymin=10^-9, ymax=10^-2,
		xtick={0.0625,0.1,0.25,0.5},
		log x ticks with fixed point,
		ytick={10^-12,10^-11,10^-10,10^-9,10^-8,10^-7,10^-6,10^-5,10^-4,10^-3,10^-2},
		legend pos=south east,
		legend columns = 3,
		xmajorgrids=true,
		ymajorgrids=true,
		grid style=dashed,
		]
		\addplot
		coordinates {
            (0.5, 0.0600617534325911112)
			(0.25, 0.00557450082300144957)
			(0.16666666, 0.00081626308570894267)
			(0.125, 0.00020477866809034384)
			(0.1, 6.98158068287520921e-05)
			(0.083333333,2.91587658032949004e-05)
           (1/14,1.41067562674136511e-05)
           (1/16,7.5153113420398704e-06)
		};
		\addlegendentry{p=3, r=1},
		\addplot
		coordinates {
    		(0.5, 0.00522418471570287781)
			(0.25, 0.000127586616855702214)
			(0.1666666666,1.28642355913829204e-05)
			(0.125, 2.5797286779233216e-06)
     		(0.1, 7.5688957792718852e-07)
		(0.0833333333, 2.85256500050647393e-07)

		};
		\addlegendentry{p=4, r=1},
	    \addplot[color=darkgray,mark=triangle*]
		coordinates {
		(0.5		,0.000285296000167890211)
		(0.25		,5.07688812116633106e-06)
        (0.166666666,4.05100526676587839e-07)
     	(0.125		,7.87332589216156844e-08)
        };
		\addlegendentry{p=5, r=1},
		\addplot[color=blue, mark size=1.5pt,mark=none, dashed]
		coordinates {
			(0.5, 0.5^4*0.29)
			(1/16, 1/16^4*0.29)};
		\addplot[color=red, mark size=1.5pt,mark=none, dashed]
		coordinates {
			(0.5, 0.5^5*0.044)
			(1/12, 1/12^5*0.044)};
		\addplot[color=darkgray, mark size=1.5pt,mark=none, dashed]
		coordinates {
			(0.5, 0.5^6*0.01)
			(0.125, 0.125^6*0.01)};
					\legend{,,,$\mathcal{O}(h^4)$,$\mathcal{O}(h^5)$,$\mathcal{O}(h^6)$}
	\end{loglogaxis}
\end{tikzpicture}
\caption*{\footnotesize{(b) $||u - u_h||_{L^2(\Omega)}$}}
\end{minipage}
    \caption{ \small Convergence studies of the $H^2$ seminorm and the $L^2$ norm on the example of the
smooth solution on a square plate of orders $p = 3$, $p = 4$ and $p = 5$. For all these cases the regularity $r$ is $1$.}
    \label{fig:StabilityEx1}
\end{figure}
The convergence rate indicate for both error estimates  optimal convergence rates $\mathcal{O}(h^{p-1})$ for the $H^2$ seminorm and $ \mathcal{O}(h^{p+1})$ for the $L^2$ norm. Especially we do not see a $C^1$ locking effect.

\subsection{Point load on a square plate}
\label{subsection:point_load}
The next example shows the capability of the proposed formulation to consider point loads even in the scaling center. Again, the square plate defined as $\Omega = [-0.5,0.5]^2$ is considered and the boundaries $\partial \Omega$ are simply supported. For the material parameters we choose  $E = 10^6$ and $\nu = 0$ and the thickness is chosen such that $D = 1$. The point load is defined as $F=1$. There is an analytical solution for the displacement of the plate center under a point load, namely using   \cite{reddy2006} one gets:
\begin{equation}
    u_{ex} = \frac{4FL^2}{D\pi^4} \sum_{n=1}^{\infty} \sum_{m=1}^{\infty} \frac{1}{(m^2+n^2)^2}\approx 0.01160,
\end{equation}
where $L$ is the length of plate. The reference solution $u_{ref} \approx u_{ex}$ for the deflection in the center under the load application is obtained from the series above by taking sufficient terms into account. 
Fig. \ref{fig:Ex_2_Mesh_Defo} shows a mesh exemplary and the corresponding deformation plot.
\begin{figure}[H]
\begin{minipage}{0.48\textwidth}
	\begin{figure}[H]
		\centering
  		\includegraphics[width=\textwidth]{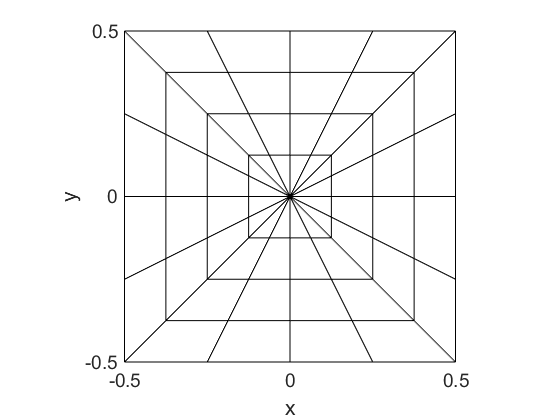}
		\caption*{\footnotesize{(a) Geometry and mesh}}
	\end{figure}
\end{minipage}
\begin{minipage}{0.48\textwidth}
	\begin{figure}[H]
		\centering
		\includegraphics[width=\textwidth]{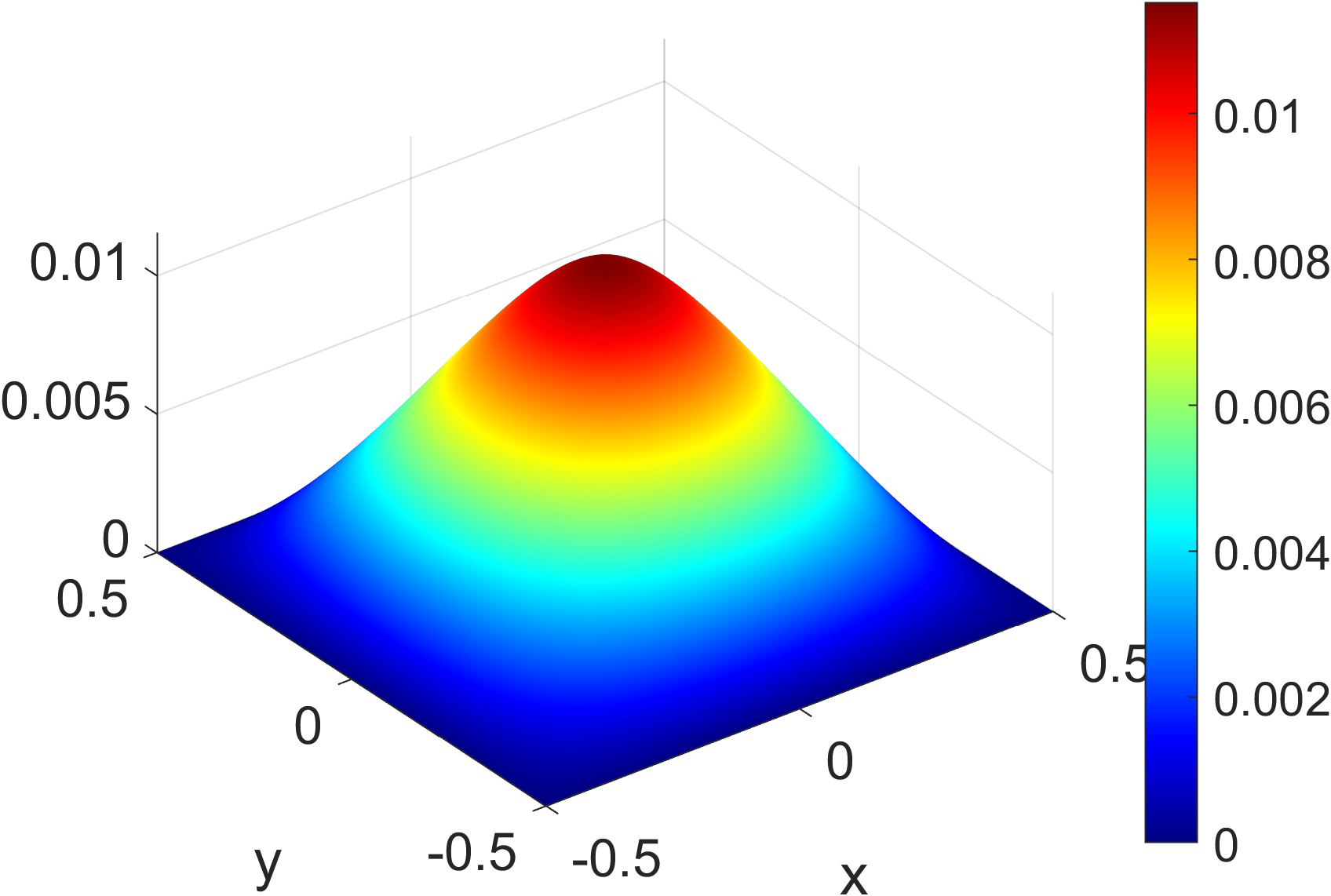}
		\caption*{\footnotesize{(b) Plot of deformation $u$}}
	\end{figure}
\end{minipage}
    \caption{\small Example of the point load on a square plate. On the left side, the underlying mesh for $h = 1/4$ is shown. On the right side the deformation plot of the corresponding mesh with $p = 3$ and $r = 1$ is displayed.}
    \label{fig:Ex_2_Mesh_Defo}
\end{figure}
Besides we look at the  relative errors  calculated by $|1-u/u_{ref}|$, where the convergence study is shown in Fig. \ref{fig:StabilityEx3}. Here, the example points out that the formulation is capable to determine results properly even for loading applied in the singular point. We think that the deviation from the best approximation rates is caused by the lack of regularity for the point load, which is not a function anymore,  strictly speaking.  In the mentioned figure we display also the bending moment $m_{11}=D \, \partial_{xx} u + \nu \, \partial_{yy} u$, a quantity for which the $C^1$ regularity is crucial.
\begin{figure}[H]
\centering
\begin{minipage}{0.47\textwidth}
\begin{tikzpicture}
	\small \begin{loglogaxis} [
		width = \textwidth,
		xlabel={ \footnotesize Mesh size $h$},
		xmin=0.025, xmax=0.5,
		ymin=5*10^-6, ymax=10^-1,
		xtick={0.03,0.05,0.1,0.2,0.3,0.5},
		log x ticks with fixed point,
		ytick={10^-9,10^-8,10^-7,10^-6,10^-5,10^-4,10^-3,10^-2,10^-1},
		legend pos=south east,
		legend columns = 3,
		xmajorgrids=true,
		ymajorgrids=true,
		grid style=dashed,
		]
		\addplot
		coordinates {
		(0.5		,0.025201046736056)
		(0.25		,0.006471083270783)
		(0.166666667,0.002866547971774)
		(0.125		,0.001611547426389)
		(0.1		,0.001031227227093)
		(0.083333333,7.161857774627967e-04)
		(0.071428571,5.263625760759671e-04)
		(0.0625		,4.037151890995405e-04)
		(0.055555556,3.221851292611877e-04)
		(0.05		,2.539236560431535e-04)
		(0.045454545,2.134323424185514e-04)
		(0.041666667,1.785722046213634e-04)
		(0.038461538,1.450042205973778e-04)
		(0.035714286,1.340516158990779e-04)
		(0.033333333,1.335976031135999e-04)
        };
		\addlegendentry{p=3},
		\addplot
		coordinates {
		(0.5		,0.005610350974733)
		(0.25		,0.001387571163495)
		(0.166666667,6.159386840871584e-04)
		(0.125		,3.463683571577336e-04)
		(0.1		,2.216424597545341e-04)
		(0.083333333,1.516842726270173e-04)
		(0.071428571,1.166046563839673e-04)
		(0.0625		,7.446419582757269e-05)
		(0.055555556,8.241074600834342e-05)
        };
		\addlegendentry{p=4},
		\addplot[color=darkgray,mark=triangle*]
		coordinates {
		(0.5		,0.001884689247931)
		(0.25		,4.618003607584909e-04)
		(0.166666667,2.048264228111663e-04)
		(0.125		,1.141109943840757e-04)
		(0.1		,7.122105911705479e-05)
        };
		\addlegendentry{p=5},
	\end{loglogaxis}
\end{tikzpicture}
\caption*{\footnotesize{(a) $|1-u/u_{ref}|$}}
\end{minipage}
\begin{minipage}{0.5\textwidth}
	\begin{figure}[H]
		\centering
	  	\includegraphics[width=\textwidth]{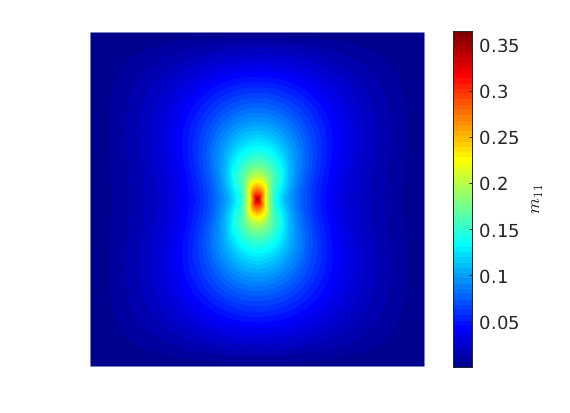}
    \vspace*{0.0cm}
		\caption*{\footnotesize{(b) Bending moment $m_{11}$}}
	\end{figure}
 \vspace*{0.08cm}
\end{minipage}
    \caption{\small On the left, the convergence study of the example of the point load on a square plate and orders of $p = 3$, $p = 4$, and $p = 5$ is shown. For all these cases the regularity is $r=1$. On the right, the bending moment $m_{11}$ is plotted for $p=4$ and $h=1/8$.}
   \label{fig:StabilityEx3}
\end{figure}


\subsection{Perforated circular disk}
Having tested the method for the plate formulation in a simply square domain,  trimmed  geometries are now incorporated. Therefore, a simply supported disk of radius $R = 1$ and $t = 0.02$, $ E = 10^7$, $\nu = 0.3$ is trimmed by four holes of diameter $d_{hole}=0.1$, see Fig. \ref{fig:Ex_3_Geo_Mesh} (a). The center of the holes are placed on the $x$ and $y$ axes with a distance of $0.4$ from the origin. The disk is loaded by a constant surface load $g=1$. The reference solution is obtained from \cite{liu2018} as the maximal deflection in the center $u_{ref} =0.008950$, which is a numerically converged solution determined using the commercial software Abaqus with the triangular, linear shell element S3.  
Since the SB-IGA formulation requires star shaped patches, it is not possible to determine the deflection by a single scaling center but several scaling centers are required; see Sec. \ref{subsec:non-star-domains}. In the following, a mesh of $25$ scaling centers is evaluated (see Fig. \ref{fig:Ex_3_Geo_Mesh} (b)) with varying numbers of elements and orders on each patch.
\begin{figure}[H]
\begin{minipage}{0.47\textwidth}
	\begin{figure}[H]
		\centering
		\includegraphics[width=0.8\textwidth]{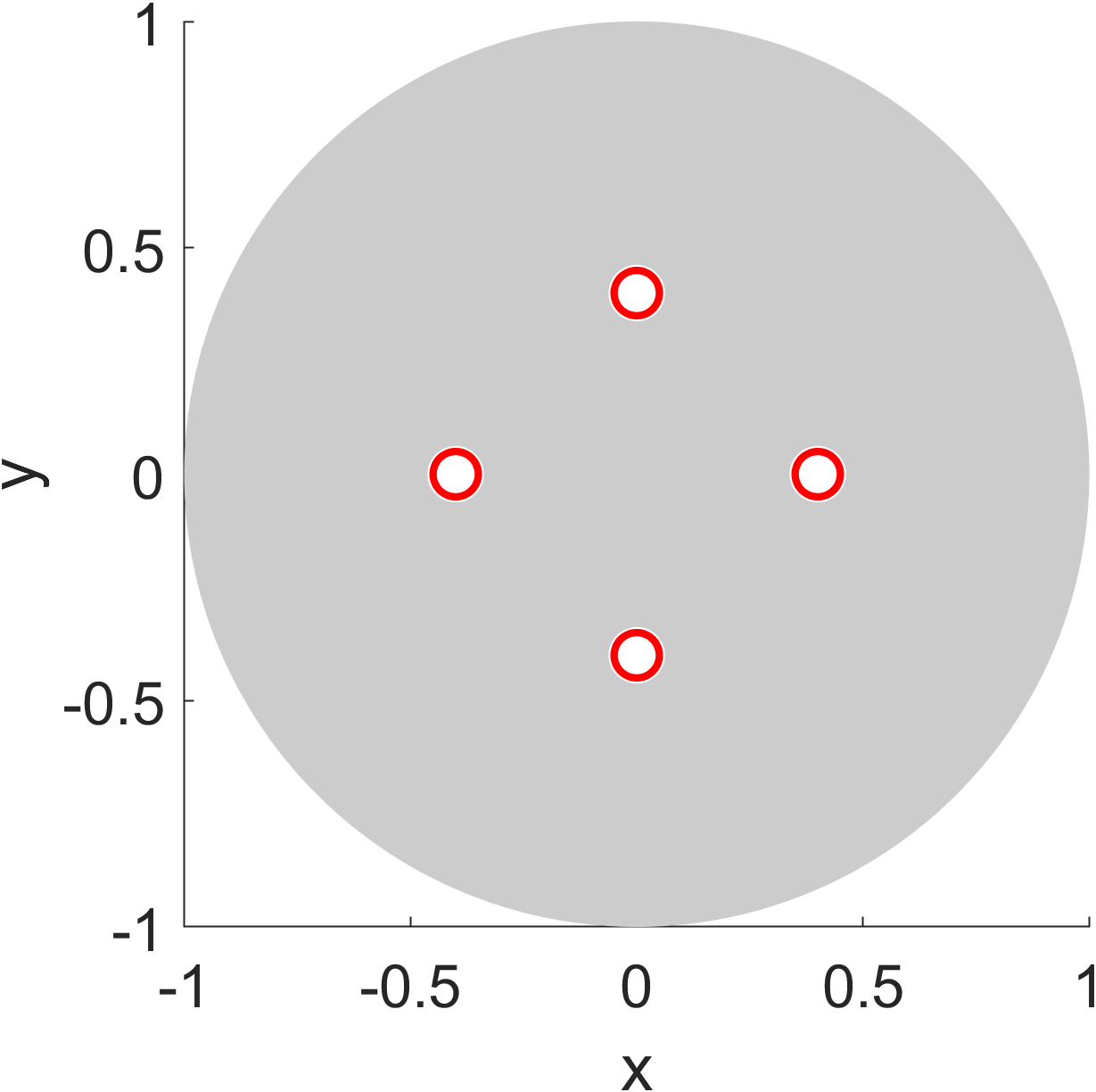}
		\caption*{\footnotesize{(a) Geometry}}
	\end{figure}
\end{minipage}
\begin{minipage}{0.47\textwidth}
	\begin{figure}[H]
		\centering
		\includegraphics[width=0.8\textwidth]{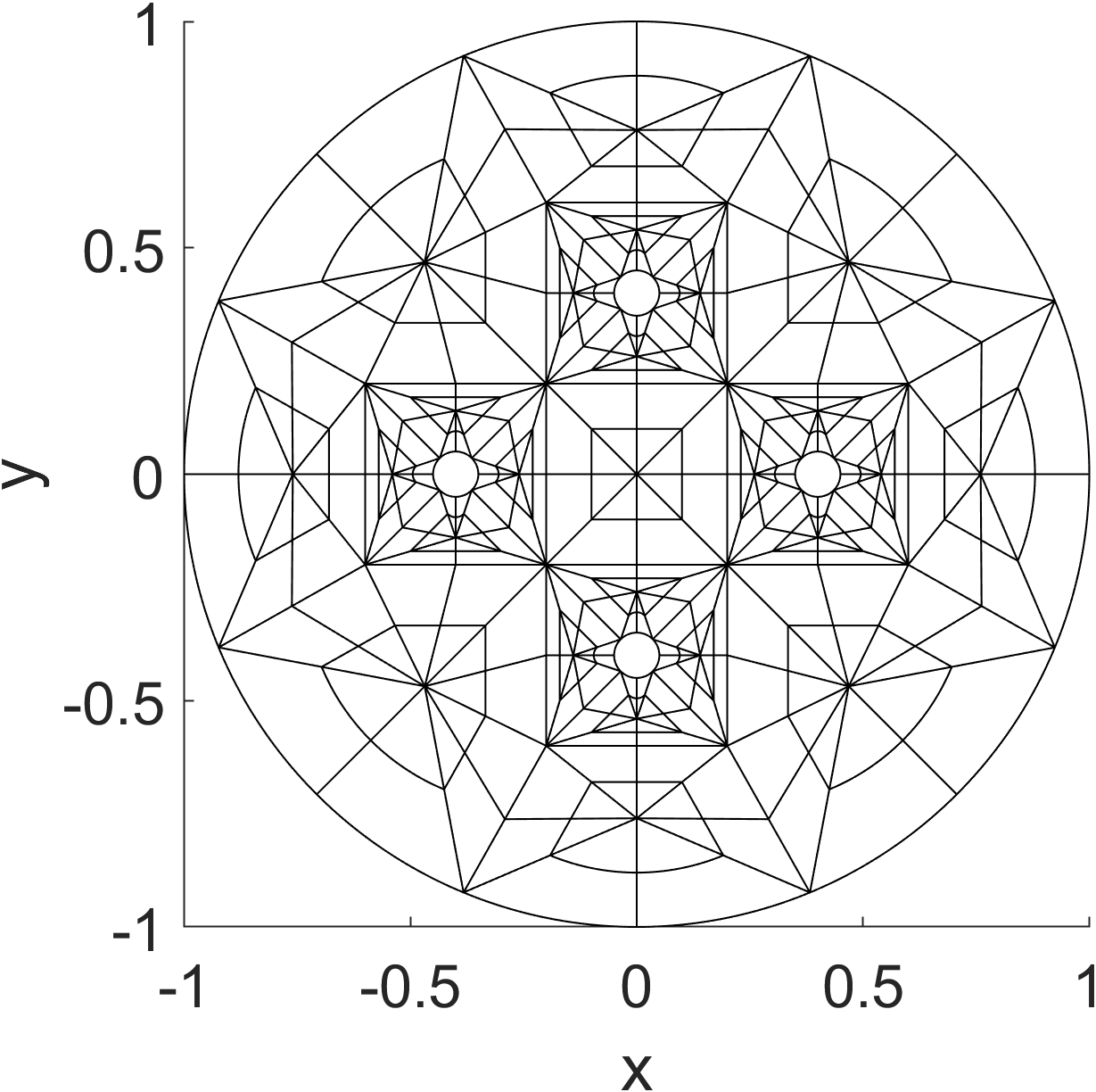}
		\caption*{\footnotesize{(b) Mesh}}
	\end{figure}
\end{minipage}
    \caption{\small Model of the perforated circular disk. On the left side the geometry with the trimming curves is drawn. On the right side, the underlying mesh of $h = 1/2$ is shown.}
    \label{fig:Ex_3_Geo_Mesh}
\end{figure}
Fig. \ref{fig:Ex4_Results} (a) shows the deformation plot of the example corresponding to the mesh presented in Fig. \ref{fig:Ex_3_Geo_Mesh}. Furthermore, the convergence is evaluated in \ref{fig:Ex4_Results} (b) for the orders of $p=3$, $1/h=2,3,4,5$  and $p=4$, $1/h =2,3,4$. For a constant surface load  nearly exact results are obtained by at least cubic basis functions. This is fulfilled for both orders.  A comparison to an automatic meshing approach of triangular Bézier spline elements presented in \cite{liu2018} shows that the incorporation of trimming in the structural description of the model as in the context of SB-IGA has advantages over mesh finding algorithms as the meshing algorithm does not converge towards the reference solution even for very fine meshes.

\subsection{L-bracket}\label{L-bracket}
In the second last example, an L-bracket is investigated analogously to \cite{Coradello2021} and \cite{Benzaken2017} with the geometry shown in Fig. \ref{fig:Ex_5_Geo_Mesh}. The bracket has underlying parameters $E= 200\cdot 10^9$, $\nu = 0$ and $t=0.01$. The boundary conditions are clamped conditions on the left holes (upper and lower hole) as well as a constant line load of $f = 100$ on the upper right edge marked in blue. Further, the partitioning of the mesh created with 20 scaling centers is given in Fig. \ref{fig:Ex_5_Geo_Mesh} (b). We emphasize that meshes with less scaling centers could be applied, however, the model is not chosen to demonstrate the convergence, but the general application to complex models. 
\begin{figure}[H]
\begin{minipage}{0.5\textwidth}
	\begin{figure}[H]
		\centering
  \begin{tikzpicture}
  \node (eins) at (0.3,0.7) {\includegraphics[width=\textwidth]{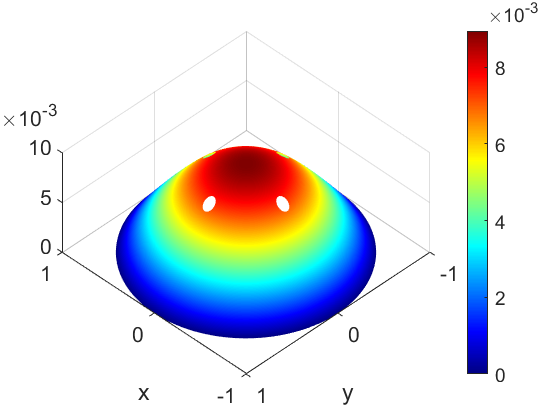}};
  \node at (2.6,2.1) {$u$};
  \end{tikzpicture}		
		\caption*{\footnotesize{(a) Deformation plot}}
	\end{figure}
\end{minipage}
\hspace*{0.5cm}
\begin{minipage}{0.47\textwidth}
\begin{tikzpicture}
	\small \begin{semilogxaxis} [
		width = \textwidth,
		xlabel={ \footnotesize Degrees of freedom},
		xmin=2500, xmax=20000,
		ymin=0.4, ymax=1.2,
		xtick={10^3,10^4,10^5},
		ytick={0, 0.2,0.4,0.6,0.8,1.0, 1.2},
		legend pos=south east,
		legend columns = 3,
		xmajorgrids=true,
		ymajorgrids=true,
		grid style=dashed,
		]
		\addplot
		coordinates {
		(3780,0.008936832751940/0.008950) 
		(6720,0.008948664959968/0.008950) 
		(10500,0.008951409403031/0.008950) 
		(15120,0.008952339264834/0.008950) 
		(20580,0.008952279334616/0.008950) 
		};
		\addlegendentry{p=3},
		\addplot
		coordinates {
        (6722,0.008951726607923/0.008950) 
        (12705,0.008952553645941/0.008950) 
        (20582 , 0.008952323849834/0.008950) 
        };
		\addlegendentry{p=4},
		\addplot[color=darkgray,mark=triangle*]
		coordinates {
        (2949,  0.007367/0.008950)
        (5154,  0.008273/0.008950)
        (9219,  0.008608/0.008950)
        (17109, 0.008772/0.008950)
        };
		\addlegendentry{\cite{liu2018}},
	\end{semilogxaxis}
\end{tikzpicture}
\caption*{\footnotesize{(b) $u/u_{ref}$}}
\end{minipage}
    \caption{ \small On the left side, the corresponding deformation plot to the mesh of Fig. \ref{fig:Ex_3_Geo_Mesh} with $p=3$ and $r=1$ is shown. On the right side, the convergence studies of the displacement in the middle of the perforated disk with orders of $p = 3$, $p = 4$ $(r=1)$ is compared to \cite{liu2018}.}
   \label{fig:Ex4_Results}
\end{figure}
\begin{figure}[H]	
\centering
	\begin{tikzpicture}[scale=1.2]    
	    \node (eins) at (0.3,0.7) {\includegraphics[width=0.31\linewidth]{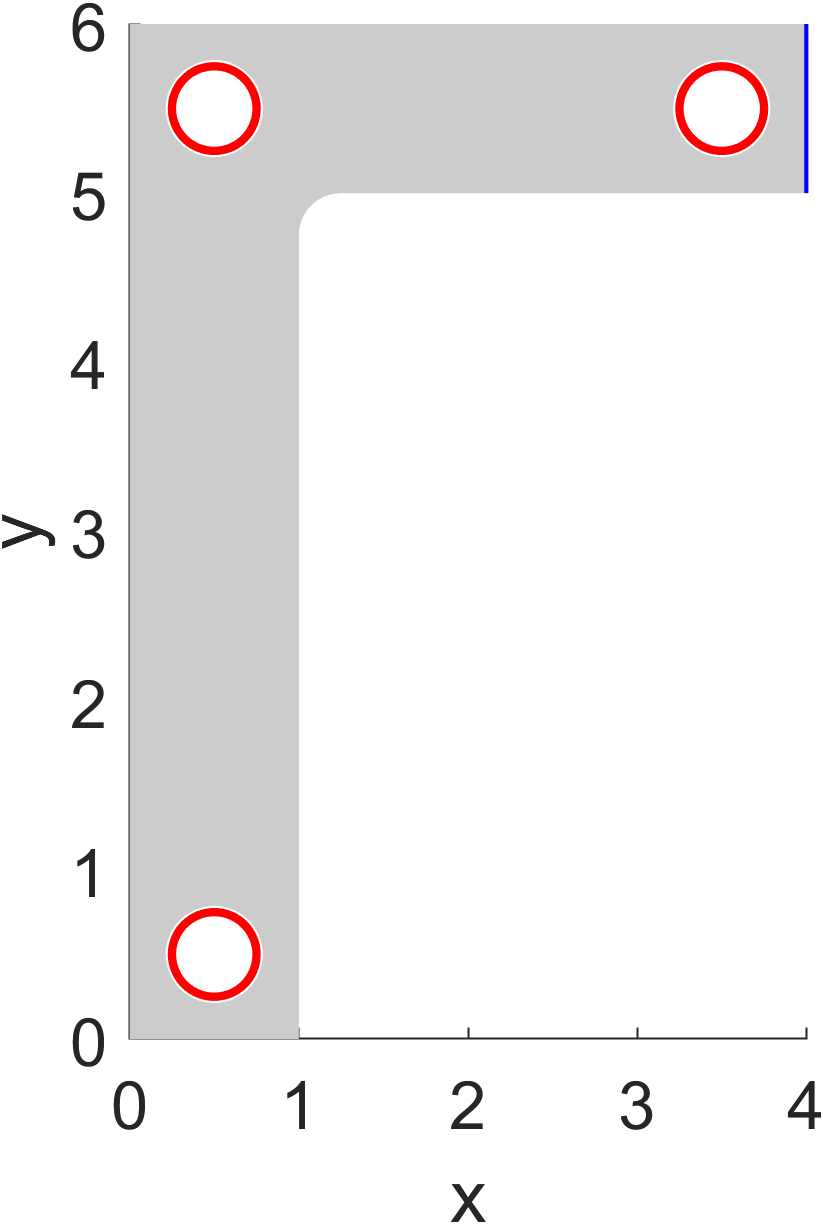}};
         \node (zwei) at (-0.45-0.3+6.6,0.69) {\includegraphics[width=0.31\linewidth]{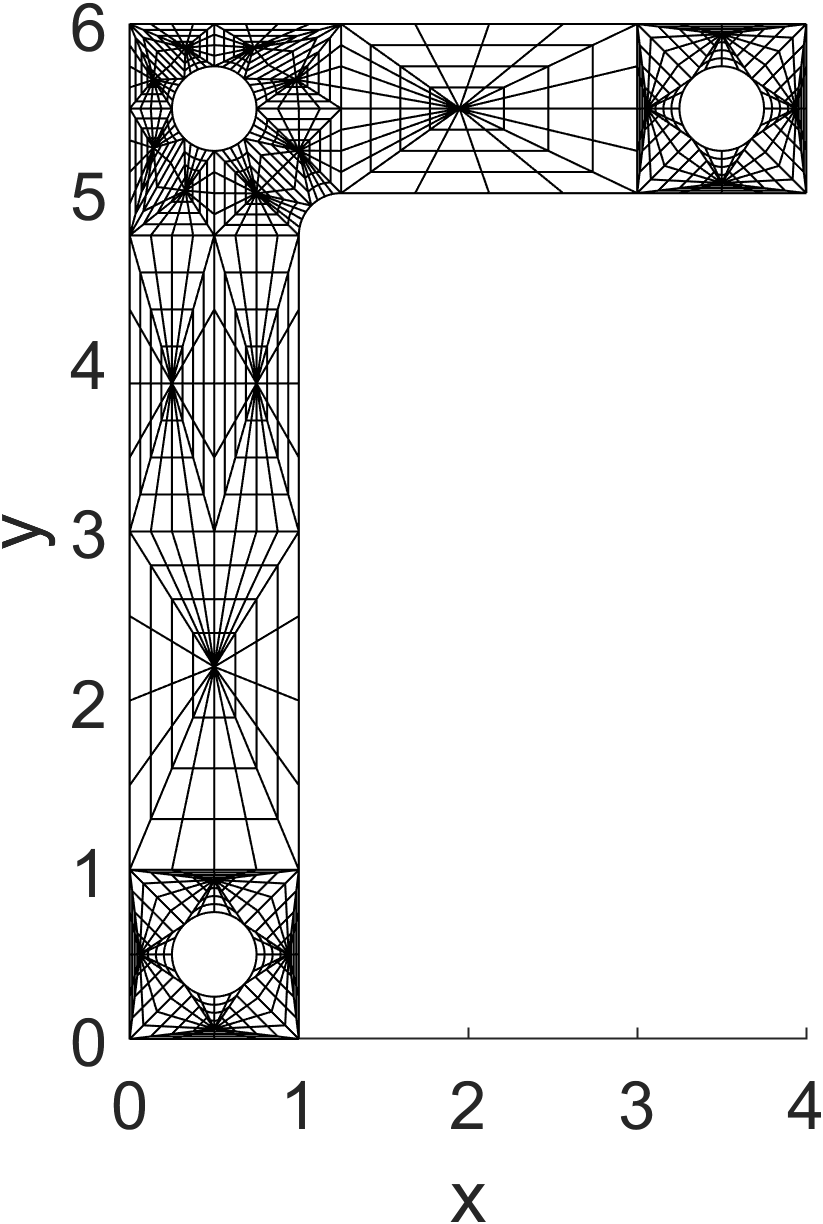}};
         \node at (-1.5,-2.4) {\footnotesize (a) Geometry};
        \node at (5.4,-2.4) {\footnotesize (b) Mesh structure};
         \node at (1.1,0.7) {\footnotesize $\boxed{\textup{clamped}}$};
          \node at (1.5,1.95) {\footnotesize $\boxed{\textup{line load}}$};
         \draw[->] (0.7,0.91) -- (-0.48,2.85);
         \draw[->] (0.7,0.49) -- (-0.48,-0.7);
         \draw[->, out=0,in=-30] (2.1,1.9) to (2.25,2.65);
         \draw[->,very thick] (2.35,3.4) -- (2.35,2.8); 
	\end{tikzpicture}
 \caption{\small The L-bracket. On the left side, the geometry is shown. On the right side the underlying mesh for $h = 1/4$ is given.}
    \label{fig:Ex_5_Geo_Mesh}
\end{figure}
\begin{figure}[H]
\centering
\begin{minipage}{0.32\textwidth}
    \includegraphics[width=\textwidth]{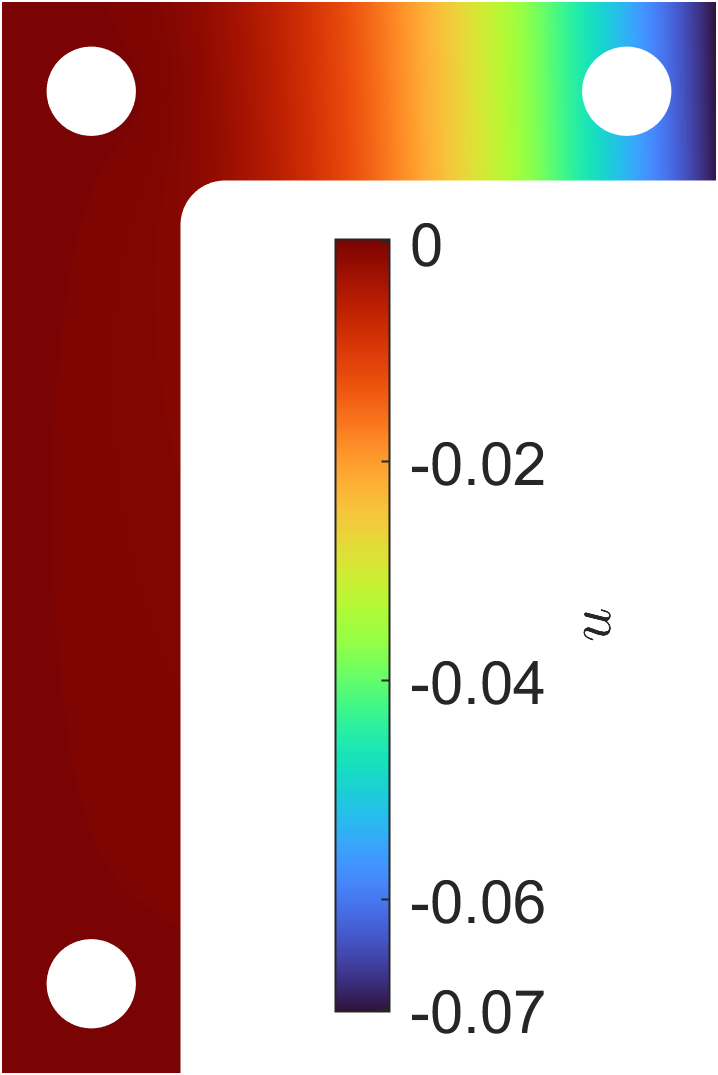}
    \caption*{\footnotesize{$p=3$}}
\end{minipage}
\hspace{2cm}
\begin{minipage}{0.32\textwidth}
    \includegraphics[width=\textwidth]{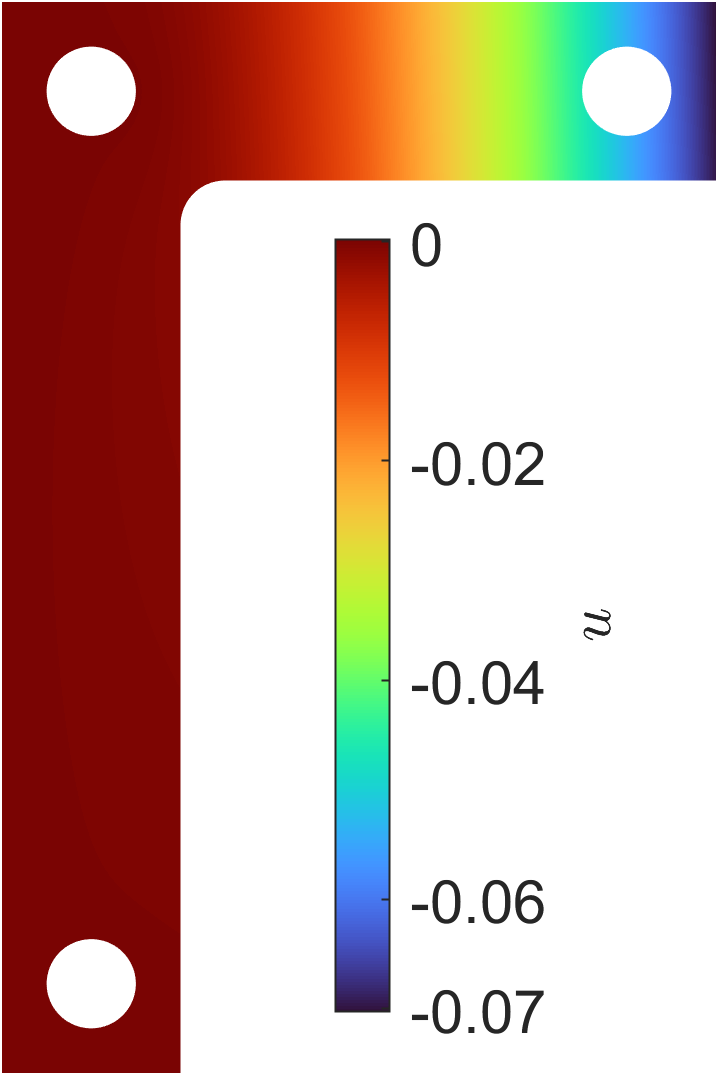}
    \caption*{\footnotesize{$p=4$}}
\end{minipage}
\hfill
\caption{ \small Example of the L-bracket with line load. On the left and in the middle, the deformation plots of the proposed approach are shown for the mesh Fig. \ref{fig:Ex_5_Geo_Mesh} (b), $r = 1$ and polynomial order of $p=3$ and $p=4$, respectively.}
\label{fig:Ex5_Deform}
\end{figure}
We present the numerical results in terms of the deformation $u$ and the bending moments $m_{11}$, $m_{22}$ and $m_{12}$. These are computed utilizing the bending stress tensor $\mathbf{m}=(m_{ij})$ defined component-wise as
\begin{equation}
m_{11} = D \, \partial_{xx} u + \nu \, \partial_{yy} u  \qquad m_{22} = D \, \partial_{yy} u + \nu \, \partial_{xx} u \qquad m_{12} = D (1-\nu) \, \partial_{xy} u.
\end{equation}
The plots of the deformation $u$ for mesh $h=1/4$ and orders $p=3$ and $p=4$ respectively are shown in Fig. \ref{fig:Ex5_Deform}. If we look at the latter  one observes a good agreement with results given by \cite[Fig. 14 (b)]{Coradello2021}. Moreover,  the bending moments are presented in Fig. \ref{fig:Ex5Moment}. Compared to \cite[Fig. 15 (d)-(f)]{Coradello2021} we obtain  very similar approximations and   the extremal values at the upper left hole and the vertical part of the bracket fit to the mentioned reference. We note  that the proposed method is capable to use the exact geometries. Further, boundary conditions can be applied at trimmed curves. 
\begin{figure}[H]
\centering
\includegraphics[width=.33\textwidth]{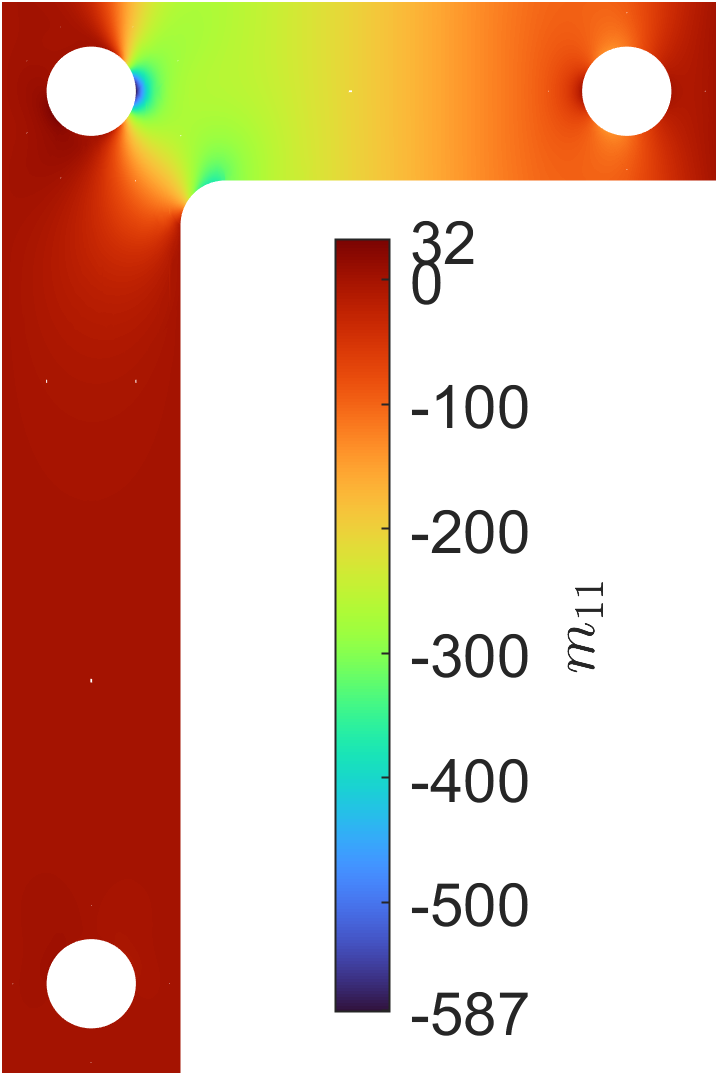}\hfill
\includegraphics[width=.33\textwidth]{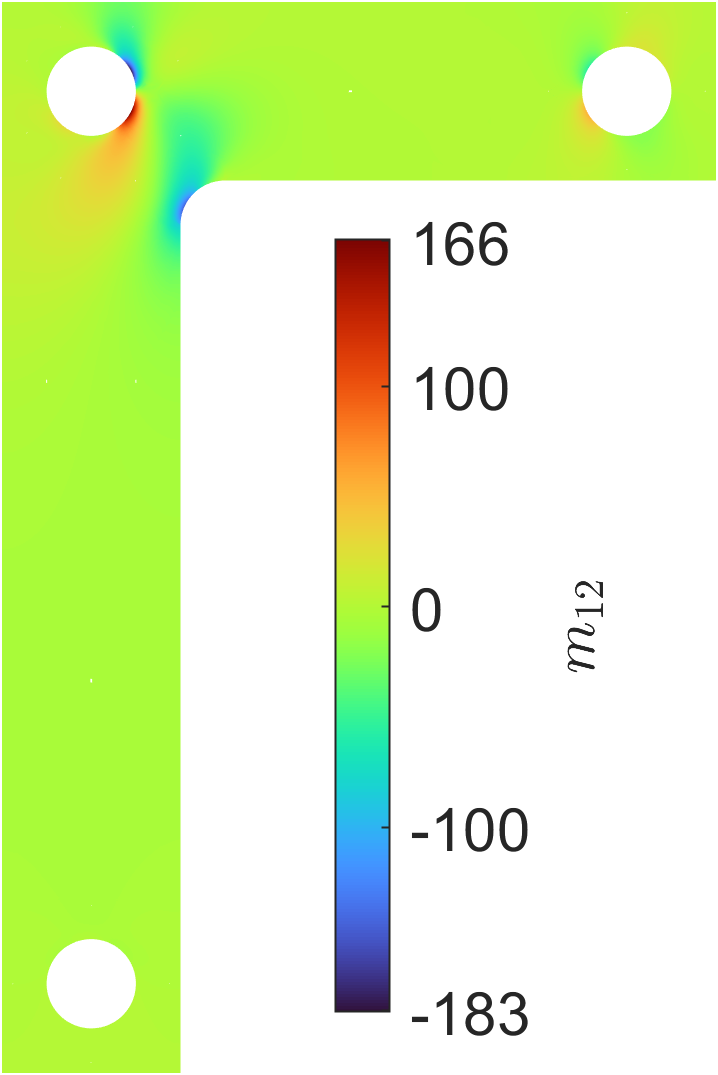}\hfill
\includegraphics[width=.33\textwidth]{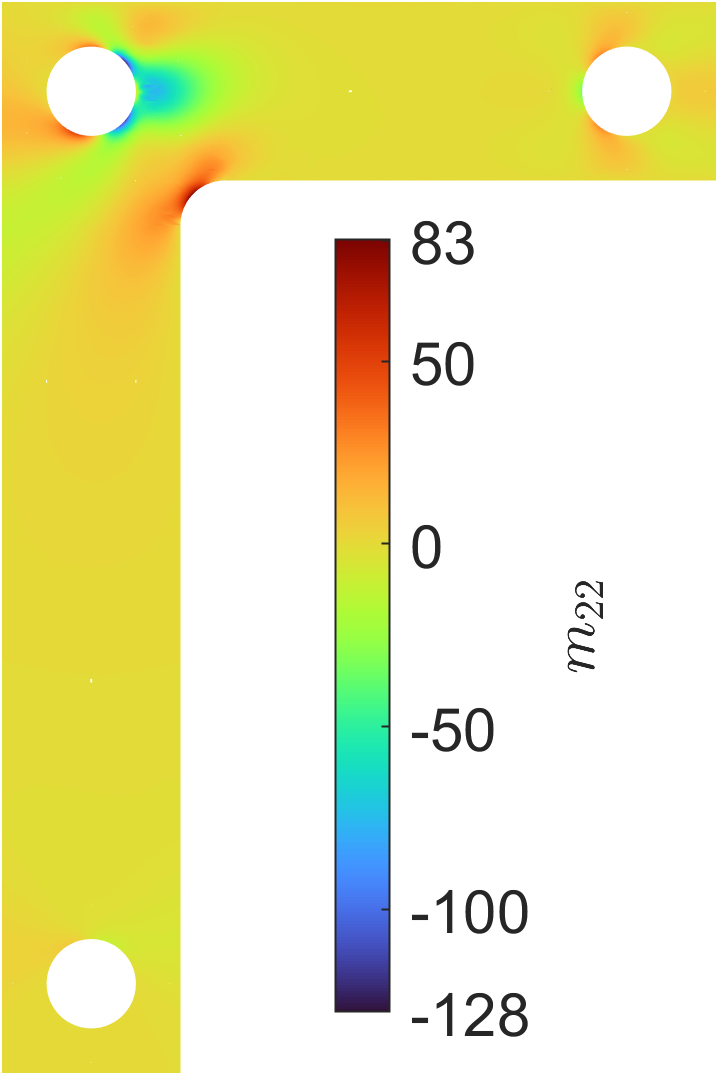}

\caption{ \small Example of the L-bracket with line load. The results of the proposed approach with mesh of $h = 1/4$ and $p = 4$, $r = 1$ are shown.}
\label{fig:Ex5Moment}

\end{figure}

\subsection{A clamped violin}
In this  example, we demonstrate the applicability of the coupling procedure in the context of more complicated geometries involving non-trivial trimmed parts. In more detail, we look at a KL-plate which has the shape given in Fig. \ref{fig:Ex_6_Geo_Mesh} (a). This geometry is inspired by the violin example in \cite{Coradello2020hierarchically}, however, here the setting as  well as the geometry  differs. We note that the boundaries of Fig. \ref{fig:Ex_6_Geo_Mesh} (a) are represented by degree $3$ B-splines.
A decomposition of the  domain into star-shaped blocks with straight interfaces is possible and for the computations we use the mesh structure given in  Fig. \ref{fig:Ex_6_Geo_Mesh} (b).  We consider the case of a uniform load function $f= -0.01$ and as boundary conditions we require the clamping of the inner boundaries that define the f-holes. All the other boundaries can move freely. Further we choose the material parameters $E=10^5, \ \nu=0.1, \ t=0.2$. Using B-splines of degree $3$ and regularity $1$ for the determination of the coupled basis functions we obtain as deformation the result in  Fig. \ref{fig:Ex_6_Deform}. Although a reference solution is missing, we regard the numerical deformations as reasonable and the coupling ansatz seems to work for this advanced setting.
\begin{figure}[H]	
\hspace*{-1cm}
	\begin{tikzpicture}[scale=1.1]    
	    \node (eins) at (0.3-1.5,0.7) {\includegraphics[width=0.57\linewidth]{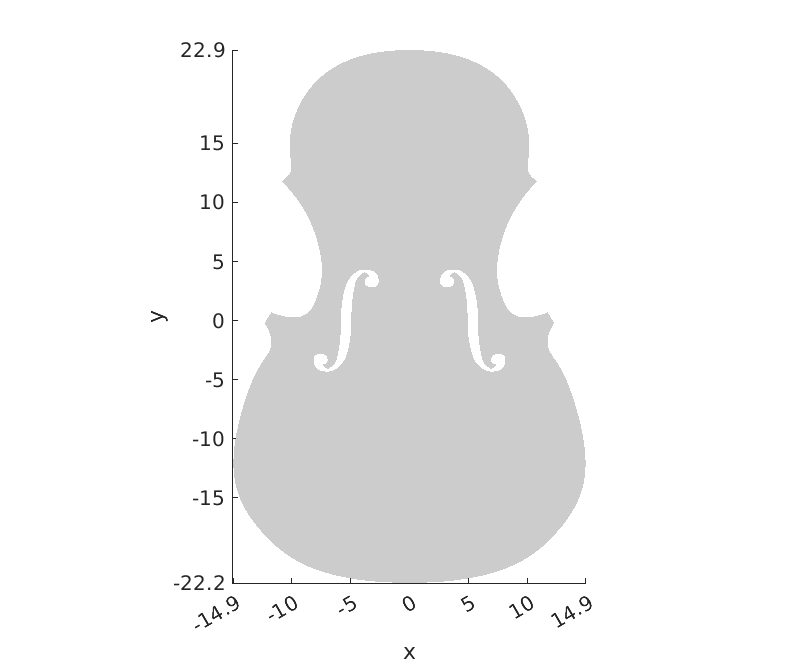}};
         \node (zwei) at (-0.45-0.3+6.6,0.69) {\includegraphics[width=0.53\linewidth]{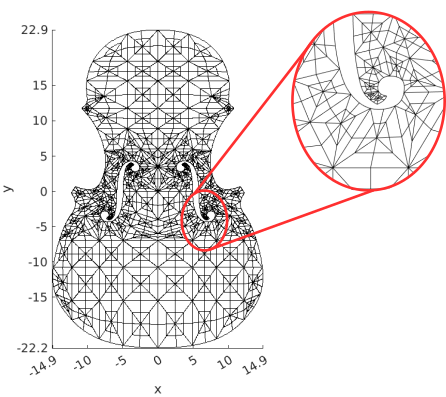}};
         \node at (-2,-3.3) {\footnotesize (a) Geometry};
        \node at (5.4,-3.3) {\footnotesize (b) Mesh structure};
         \node at (-1.07,2.73) {\footnotesize $\boxed{\textup{clamped}}$};
         \draw[->] (-1-0.02-0.05,2.5) -- (-1.5+0.1-0.05,1.45);
         \draw[->] (-1-0.02-0.05,2.5) -- (-0.5-0.1-0.05,1.45);
	\end{tikzpicture}
 \caption{ \small The violin example. On the left side, the computational domain is illustrated. On the right side the underlying mesh is shown.}
    \label{fig:Ex_6_Geo_Mesh}
\end{figure}

\begin{figure}[H]	
\hspace*{-1cm}
	\begin{tikzpicture}[scale=1.1]    
	    \node (eins) at (0.3-1.5,0.7) {\includegraphics[width=0.57\linewidth]{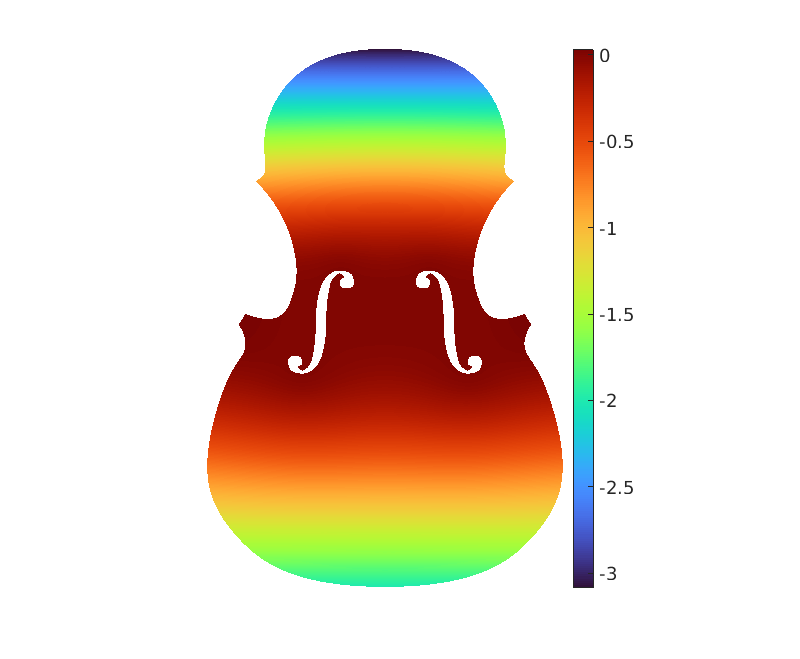}};
         \node(zwei) at (-0.45-0.3+6,0.69) {\includegraphics[width=0.48\linewidth]{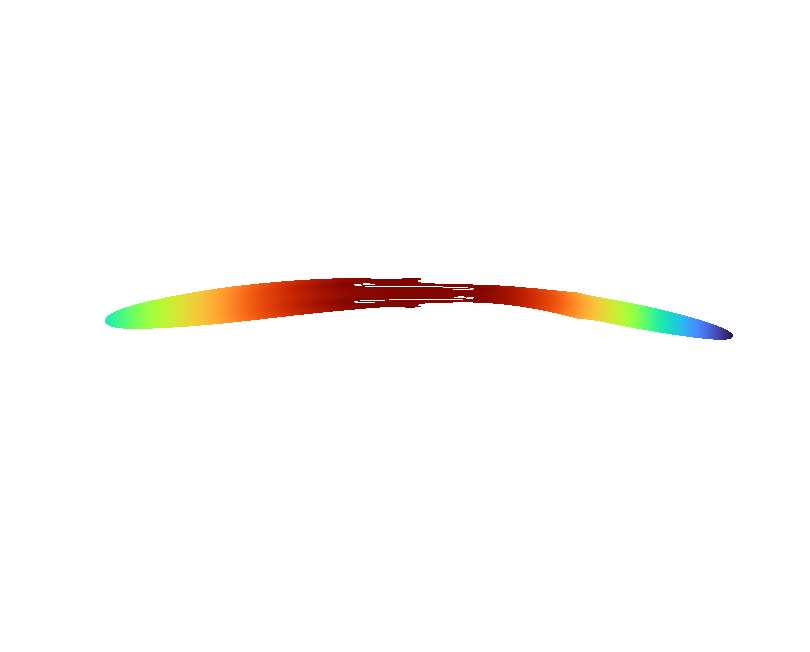}};
      \node at (0.35,1) {$u$};
	\end{tikzpicture}
 \caption{ \small The violin example. On the left side, we show the deformation. On the right side the deformation is illustrated utilizing a side view.}
    \label{fig:Ex_6_Deform}
\end{figure}

\section{Remarks on  stability}
\label{Sec:Stability}
The examples from above confirm that we can apply the $C^1$-coupled basis functions in the context of the Kirchhoff plate. In particular, it is still possible to handle the high-order problem although we have a singular parametrization mapping. Certainly, this singularity comes along with some difficulties. The main bottleneck is the occurrence of large condition numbers for the underlying linear systems, especially if the mesh sizes gets small. The latter causes problems in the sense of stability of the approximate solution and below we  discuss this briefly.\\
First, we bring to mind that the definition of the $C^1$ basis functions implies the boundedeness of the  $H^2$-norm, i.e. $\mathcal{V}_h^{M,1} \subset H^2(\o)$ if we have B-spline ansatz spaces and B-spline parametrizations. 
By construction, we have the smoothness of each basis function in the interior of each mesh element and as already mentioned the global continuity of the first derivatives. Then, if we look at 'Case I' from in Sec. 4. of \cite{takacs2012}, we get with Theorem 4.4. from latter article that the coupled  SB-IGA test functions are in $H^2(\o_m)$ for each patch. To see this, we observe that we use for the coupling method only such B-splines $\widehat{B}_{i,p}^r \cdot \widehat{B}_{j,p}^r$ with $j>2$ or the additional scaling center test functions, which are obviously smooth in a whole neighborhood of the scaling center. But, as piece-wise $H^2$ mappings which are globally $C^1$ we also get the $H^2$ property in the whole domain. 
Since we use a quadrature formula with discrete evaluation points away from the  scaling center, we obtain  well-defined  integral values and one does not see directly the singularity within the computations of system matrix entries. However,  we can observe large second derivative entries caused by the mesh degeneration.  This comes along
with instability problems for the discrete problem in the case of fine meshes. In other words,
the condition number increases strongly. This instability issue can be seen in figures Fig. \ref{fig:StabilityEx1_add} (a)-(b) and Fig.  \ref{fig:StabilityEx3_add} (a). Here we have to note that   we use for all examples the standard \emph{mldivide()} function from MATLAB \cite{MATLAB:2022} to solve appearing linear systems. \\
A  naive idea to relax the problem of evaluations near the singular point is to reduce the number of basis functions near the scaling center. An easy  way  without much additional effort, is to combine the basis functions of two meshes. One coarse mesh, for the basis functions with support near the scaling center and one refined mesh, for the areas away from the singular point. In more detail, one could proceed as follows. We have given two SB-meshes for  some domain. One fine mesh with   subdivisions w.r.t. both parametric coordinates (see Fig. \ref{Fig:two_mesh} (a)) and secondly we have the analogous SB-mesh of the same domain, but which is only refined in the radial direction, compare Fig. \ref{Fig:two_mesh}. Clearly, the coarse mesh is not suitable to   highly accurate approximations. However,   the mesh elements adjacent to the scaling center have diameter $\mathcal{O}(h)$ and the mesh from Fig. \ref{Fig:two_mesh} (b) is enough for an approximation in the neighborhood of $\f{x}_0$.  Hence, in view of Fig. \ref{Fig:two_mesh}(a) and Fig. \ref{Fig:two_mesh} (b) we consider only those basis functions   in the fine mesh which have a support outside \color{red} $A$ \color{black} which stands for  the union of  mesh elements adjacent to the singular point. This means we consider $$\mathcal{V}_h^{\textup{fine}} \coloneqq \textup{span}\{ \phi \in \mathcal{V}_h^{(1)} \ |  \ \textup{supp}(\phi) \subset \o \backslash \overline{A} \},$$ where $\mathcal{V}_h^{(1)} $ is the (un-coupled) SB-IGA space for the fine mesh.
From the SB-IGA space $\mathcal{V}_h^{(2)}$ corresponding to mesh Fig. \ref{Fig:two_mesh} (b) we add the space $$\mathcal{V}_h^{\textup{coarse}} \coloneqq \textup{span}\{ \phi \in \mathcal{V}_h^{(2)} \ |  \ \textup{supp}(\phi) \cap  \overset{\circ}{A}\neq \emptyset \}.$$  Consequently,
our modified (uncoupled) SB-IGA test function space is $$\mathcal{V}_h^{\textup{combined}} \coloneqq \mathcal{V}_h^{\textup{fine}} \oplus \mathcal{V}_h^{\textup{coarse}}.$$
After this we can  go through similar steps like in the sections before to define $C^1$-regular  global basis functions. This idea can straightforwardly generalized to more complex geometries.
\begin{remark}
    By the definition of the two spaces utilizing the support of the basis functions it is clear that the  basis  from $\mathcal{V}_h^{\textup{fine}} $ together with the basis functions from $\mathcal{V}_h^{\textup{coarse}} $ directly determines a basis for $\mathcal{V}_h^{\textup{combined}} $.
\end{remark}
Although such a combination of two meshes is certainly not the  only method to remove problematic basis functions near the singularity, it is easy to implement and it is helpful to demonstrate the stabilizing influence of a such mesh coarsening near the center. Therefore, we repeat below the convergence test from Sec. \ref{subsection:smooth_solution} as well as the example of the plate with point load from Sec. \ref{subsection:point_load}, but now with the above explained two-mesh ansatz.
If we look at the results in Fig. \ref{fig:StabilityEx1} and \ref{fig:StabilityEx3}, we see an obviously more stable decay behavior compared to the original unstabilized computations. 

\begin{figure}[h!]	
\centering
\begin{tikzpicture}[scale=0.76]
      \node (fi1) at (-0.015,0) {\includegraphics[width=0.38\linewidth]{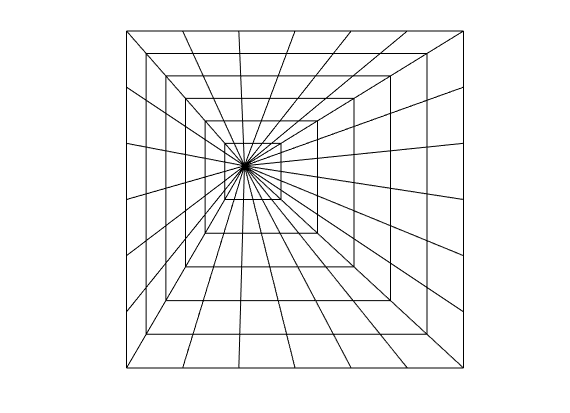}};
      \node (fi2) at (7+1,0) {\includegraphics[width=0.38\linewidth]{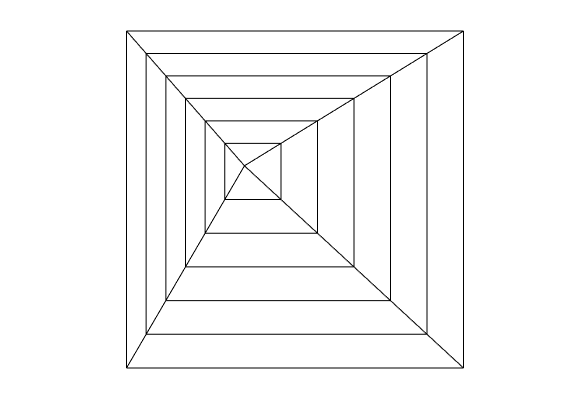}};
      	\draw[fill=blue,opacity=0.4,scale = 4.45, shift={(0.025,0.02)}] (-0.20833,0.166666667) -- (-0.20833,0) -- (-0.5,-0.5) -- (-0.5,0.5)-- (-0.20833,0.166666667);      
       \draw[fill=blue,opacity=0.4,scale=4.45, shift={(0.025,0.02)}] (-0.20833,0) -- (-0.0416667,0) -- (0.5,-0.5) -- (-0.5,-0.5)-- (-0.20833,0);
       		\draw[fill=blue,opacity=0.4,scale=4.45, shift={(0.025,0.02)}]  (-0.0416667,0) -- (-0.0416667,0.166666667) -- (0.5,0.5) -- (0.5,-0.5)--  (-0.0416667,0);
         		\draw[fill=blue,opacity=0.4,scale=4.45,shift={(0.025,0.02)}] (-0.20833,0.166666667) -- (-0.0416667,0.1666667) -- (0.5,0.5) -- (-0.5,0.5)-- (-0.20833,0.166666667);
           \draw[fill=red,opacity=0.4,shift={(6.225+1,0.1)},scale=0.72] (0,0) -- (1,0) -- (1,1) -- (0,1) --(0,0);
           \node at (0,-2.8) { \footnotesize (a) Fine mesh};
            \node at (7+1,-2.8) { \footnotesize (b) Coarse mesh};
             \node[color = blue] at (0,2.7) {$\o \backslash A$};
              \node[color = red] at (7.12+1,2.7) {$A$};
              \draw[<-,dashed,very thick, out=180,color=red, in = 90] (6.93+1,2.6) to (6.33+1,0.8);
  \end{tikzpicture}
  \caption{ \small To demonstrate the influence of the basis functions near the scaling center, we combine the basis functions from two meshes. The right mesh with refinement only in radial direction is exploited for the approximation near the scaling center, whereas the fully refined mesh on the left is only used for the basis functions with support outside the mesh elements adjacent to the singular point. }
   \label{Fig:two_mesh}
\end{figure}

\begin{figure}
\begin{minipage}{0.47\textwidth}
\begin{tikzpicture}
	\small \begin{loglogaxis} [
		width = \textwidth,
		xlabel={ \footnotesize Mesh size $h$},
		xmin=0.025, xmax=0.5,
		ymin=10^-7, ymax=2*10^0,
		xtick={0.03,0.1,0.3,0.5},
		log x ticks with fixed point,
		ytick={10^-8,10^-7,10^-6,10^-5,10^-4,10^-3,10^-2,10^-1,10^0},
		legend pos=south east,
		legend columns = 3,
		xmajorgrids=true,
		ymajorgrids=true,
		grid style=dashed,
		]
		\addplot
		coordinates {
    	(0.25,  1.39926367617716552)
			(0.125, 0.268423770832067199)
			(0.083333, 0.108930404294319091)
			(0.0625, 0.0587248132652208252)
			(0.05, 0.0366907951962423262)
			(0.041667,0.0250919024043489043)
			(0.03571429,0.0182408593696157875)
		};
		\addlegendentry{p=3, r=1},
		\addplot
		coordinates {
    (0.25,  0.151528227056671821) 
			(0.125, 0.0164572484123510186)
			(0.083333, 0.00467560125681835709)
			(0.0625, 0.00193210975067249486)
			(0.05, 0.000976476203762964936)
			(0.041667, 0.000574792596073049286)
			(0.03571429, 0.000604918606788243211)
		};
		\addlegendentry{p=4, r=1},
	    \addplot[color=darkgray,mark=triangle*]
		coordinates {
		(0.25		,0.013083013832603)
		(0.125		,7.991902728115533e-04)
		(0.083333333,1.527698754171126e-04)
		(0.0625		,4.744731718453517e-05)
		(0.05		,6.634559391139615e-04)
		(0.041666667,7.502001388574731e-04)
		(0.035714286,0.002505621129507)
        };
		\addlegendentry{p=5, r=1},
		\addplot[color=blue, mark size=1.5pt,mark=none, dashed]
		coordinates {
			(0.25, 0.0625*10)
			(0.035714286, 0.035714286^2*10)};
		\addplot[color=red, mark size=1.5pt,mark=none, dashed]
		coordinates {
			(0.25, 0.015625*5)
			(0.035714286, 0.035714286^3*5)};
		\addplot[color=darkgray, mark size=1.5pt,mark=none, dashed]
		coordinates {
			(0.25, 0.25^4*1.4)
			(0.0625, 0.0625^4*1.4)};
			\legend{p=3,p=4,p=5,$\mathcal{O}(h^2)$,$\mathcal{O}(h^3)$,$\mathcal{O}(h^4)$}
	\end{loglogaxis}
\end{tikzpicture}
\caption*{\footnotesize{(a) $|u - u_h|_{H^2(\Omega)}$ - unstabilized}}
\end{minipage}
\begin{minipage}{0.47\textwidth}
\begin{tikzpicture}
	\small \begin{loglogaxis} [
		width = \textwidth,
		xlabel={\footnotesize Mesh size $h$},
		xmin=0.025, xmax=0.5,
		ymin=10^-10, ymax=10^-1,
		xtick={0.03,0.1,0.3,0.5},
		log x ticks with fixed point,
		ytick={10^-12,10^-11,10^-10,10^-9,10^-8,10^-7,10^-6,10^-5,10^-4,10^-3,10^-2},
		legend pos=south east,
		legend columns = 3,
		xmajorgrids=true,
		ymajorgrids=true,
		grid style=dashed,
		]
		\addplot
		coordinates {
    (0.25, 0.00557450082300144957)
			(0.125, 0.00020477866809034384)
			(0.083333, 2.91587658032949004e-05)
			(0.0625, 7.5153113420398704e-06)
			(0.05, 3.03852143124313459e-06)
			(0.041667, 1.29630910171996071e-06)
			(0.03571429, 3.1072362669036115e-06)
		};
		\addlegendentry{p=3, r=1},
		\addplot
		coordinates {
    (0.25, 0.000127586616855702214)
			(0.125, 2.5797286779233216e-06)
			(0.083333, 2.85256500050647393e-07)
			(0.0625,4.75358633139860764e-07)
			(0.05, 5.63566610581946993e-07)
			(0.041667, 3.00975786520166108e-06)
			(0.03571429,1.12739648615629765e-05)
		};
		\addlegendentry{p=4, r=1},
	    \addplot[color=darkgray,mark=triangle*]
		coordinates {
         (0.25		,5.07688812116633106e-06)
		(0.125		,7.87332589216156844e-08)
		(0.083333333,1.23898538775868884e-06)
		(0.0625		,2.89932358364034338e-06)
		(0.05		,1.83433599487904875e-06)
		(0.041666667,2.27460416618543647e-05)
		(0.035714286,4.76655138278815003e-05)
        };
		\addlegendentry{p=5, r=1},
		\addplot[color=blue, mark size=1.5pt,mark=none, dashed]
		coordinates {
			(0.25, 0.00390625*0.22)
			(0.035714286, 0.035714286^4*0.22)};
		\addplot[color=red, mark size=1.5pt,mark=none, dashed]
		coordinates {
			(0.25, 0.25^5*0.03)
			(0.0625, 0.0625^5*0.03)};
		\addplot[color=darkgray, mark size=1.5pt,mark=none, dashed]
		coordinates {
			(0.25, 0.25^6*0.008)
			(0.125, 0.125^6*0.008)};
					\legend{,,,$\mathcal{O}(h^4)$,$\mathcal{O}(h^5)$,$\mathcal{O}(h^6)$}
	\end{loglogaxis}
\end{tikzpicture}
\caption*{\footnotesize{(b) $||u - u_h||_{L^2(\Omega)}$ - unstabilized}}
\end{minipage}

\vspace{0.5cm}

\begin{minipage}{0.47\textwidth}
\begin{tikzpicture}
	\small \begin{loglogaxis} [
		width = \textwidth,
		xlabel={\footnotesize Mesh size $h$},
		xmin=0.025, xmax=0.5,
		ymin=10^-8, ymax=2*10^0,
		xtick={0.03,0.1,0.3,0.5},
		log x ticks with fixed point,
		ytick={10^-8,10^-7,10^-6,10^-5,10^-4,10^-3,10^-2,10^-1,10^0},
		legend pos=south east,
		legend columns = 3,
		xmajorgrids=true,
		ymajorgrids=true,
		grid style=dashed,
		]
		\addplot
		coordinates {
			(0.25, 1.492521055221044)
			(0.125, 0.274629385903966)
			(0.083333, 0.110235188747572)
			(0.0625, 0.059151912186954)
			(0.05, 0.036869303477144)
			(0.041667, 0.025179134888995)
			(0.03571429, 0.018288017855597)
		};
		\addlegendentry{p=3, r=1},
		\addplot
		coordinates {
			(0.25, 0.157284644790817)
			(0.125, 0.016784154979504)
			(0.083333, 0.004727548413516)
			(0.0625, 0.001945385256940)
			(0.05, 9.807539889703056e-04)
			(0.041667, 5.614899666114134e-04)
			(0.03571429, 3.507753123169701e-04)
		};
		\addlegendentry{p=4, r=1},
	    \addplot[color=darkgray,mark=triangle*]
		coordinates {
		(0.25		,0.013841207546648)
		(0.125		,8.115371122234539e-04)
		(0.083333333,1.538438058085677e-04)
		(0.0625		,4.763267092382201e-05)
		(0.05		,1.926029983518615e-05)
		(0.041666667,9.210240400518621e-06)
		(0.035714286,4.943177187758049e-06)
        };
		\addlegendentry{p=5, r=1},		
		\addplot[color=blue, mark size=1.5pt,mark=none, dashed]
		coordinates {
			(0.25, 0.0625*10)
			(0.03125, 0.0009765625*10)};
		\addplot[color=red, mark size=1.5pt,mark=none, dashed]
		coordinates {
			(0.25, 0.015625*5)
			(0.03571429, 0.00004555395226*5)};
		\addplot[color=darkgray, mark size=1.5pt,mark=none, dashed]
		coordinates {
			(0.25, 0.25^4*1.4)
			(0.03125, 0.03125^4*1.4)};
			\legend{p=3,p=4,p=5,$\mathcal{O}(h^2)$,$\mathcal{O}(h^3)$,$\mathcal{O}(h^4)$}
	\end{loglogaxis}
\end{tikzpicture}
\caption*{\footnotesize{(c) $|u - u_h|_{H^2(\Omega)}$ - stabilized}}
\end{minipage}
\begin{minipage}{0.47\textwidth}
\begin{tikzpicture}
	\small \begin{loglogaxis} [
		width = \textwidth,
		xlabel={\footnotesize Mesh size $h$},
		xmin=0.025, xmax=0.5,
		ymin=10^-12, ymax=10^-1,
		xtick={0.03,0.1,0.3,0.5},
		log x ticks with fixed point,
		ytick={10^-12,10^-11,10^-10,10^-9,10^-8,10^-7,10^-6,10^-5,10^-4,10^-3,10^-2},
		legend pos=south east,
		legend columns = 3,
		xmajorgrids=true,
		ymajorgrids=true,
		grid style=dashed,
		]
		\addplot
		coordinates {
			(0.25, 0.005669154686944)
			(0.125, 2.079767155047321e-04)
			(0.083333, 2.948389942374852e-05)
			(0.0625, 7.629685747928446e-06)
			(0.05, 2.774588205867356e-06)
			(0.041667, 1.247498377509704e-06)
			(0.03571429, 6.457849848929466e-07)
		};
		\addlegendentry{p=3, r=1},
		\addplot
		coordinates {
			(0.25, 1.307380204737373e-04)
			(0.125, 2.650613783670587e-06)
			(0.083333, 2.909131750288367e-07)
			(0.0625, 6.239157843619700e-08)
			(0.05, 1.909179833045795e-08)
			(0.041667, 7.281615081381058e-09)
			(0.03571429, 3.363420992606157e-09)
		};
		\addlegendentry{p=4, r=1},
		\addplot[color=darkgray,mark=triangle*]
		coordinates {
		(0.25		,5.470907662596484e-06)
		(0.125		,6.631132568677453e-08)
		(0.083333333,4.957196344074540e-09)
		(0.0625		,8.841338701536581e-10)
		(0.05		,9.967740108032694e-10)
		(0.041666667,4.308518131273852e-10)
		(0.035714286,2.515677524295298e-09)
        };
		\addlegendentry{p=5, r=1},		
		\addplot[color=blue, mark size=1.5pt,mark=none, dashed]
		coordinates {
			(0.25, 0.25^4*0.22)
			(0.035714286, 0.035714286^4*0.22)
			};
		\addplot[color=red, mark size=1.5pt,mark=none, dashed]
		coordinates {
			(0.25, 0.0009765625*0.03)
			(0.03571429, 0.03571429^5*0.03)};
		\addplot[color=darkgray, mark size=1.5pt,mark=none, dashed]
		coordinates {
			(0.25, 0.25^6*0.008)
			(0.0625, 0.0625^6*0.008)};
		\legend{,,,$\mathcal{O}(h^4)$,$\mathcal{O}(h^5)$,$\mathcal{O}(h^6)$}
	\end{loglogaxis}
\end{tikzpicture}
\caption*{\footnotesize{(d) $||u - u_h||_{L^2(\Omega)}$ - stabilized}}
\end{minipage}
    \caption{\small Convergence studies of the $H^2$ seminorm and the $L^2$ norm on the example of the
smooth solution on a square plate with and without the stabilization ansatz  and orders of $p = 3$, $p = 4$ and $p = 5$ $(r=1)$.}
    \label{fig:StabilityEx1_add}
\end{figure}

\begin{figure}
\begin{minipage}{0.47\textwidth}
\begin{tikzpicture}
	\small \begin{loglogaxis} [
		width = \textwidth,
		xlabel={\footnotesize Mesh size $h$},
		xmin=0.025, xmax=0.5,
		ymin=10^-6, ymax=10^-1,
		xtick={0.03,0.1,0.3,0.5},
		log x ticks with fixed point,
		ytick={10^-9,10^-8,10^-7,10^-6,10^-5,10^-4,10^-3,10^-2,10^-1},
		legend pos=south east,
		legend columns = 3,
		xmajorgrids=true,
		ymajorgrids=true,
		grid style=dashed,
		]
		\addplot
		coordinates {
		(0.25		,0.006471083270783)
		(0.125		,0.001611547426389)
		(0.083333333,7.161857774627967e-04)
		(0.0625		,4.037151890995405e-04)
		(0.05		,2.539236560431535e-04)
		(0.041666667,1.785722046213634e-04)
		(0.035714286,1.340516158990779e-04)
        };
		\addlegendentry{p=3},
		\addplot
		coordinates {
		(0.25		,0.001387571163495)
		(0.125		,3.463683571577336e-04)
		(0.083333333,1.516842726270173e-04)
		(0.0625		,7.446419582757269e-05)
		(0.05		,4.022374767553849e-05)
		(0.041666667,9.987880674855454e-05)
		(0.035714286,2.988404887172003e-05)
        };
		\addlegendentry{p=4},
		\addplot[color=darkgray,mark=triangle*]
		coordinates {
		(0.25		,4.618003607584909e-04)
		(0.125		,1.141109943840757e-04)
		(0.083333333,5.013293255384799e-05)
		(0.0625		,7.009795122137064e-08)
		(0.05		,5.700075766768009e-05)
		(0.041666667,3.370903148769422e-05)
		(0.035714286,0.001397058960322)
        };
		\addlegendentry{p=5},
	\end{loglogaxis}
\end{tikzpicture}
\caption*{\footnotesize{(a) $|1-u/u_{ref}|$ - unstabilized}}
\end{minipage}
\begin{minipage}{0.47\textwidth}
\begin{tikzpicture}
	\small \begin{loglogaxis} [
		width = \textwidth,
		xlabel={\footnotesize Mesh size $h$},
		xmin=0.025, xmax=0.5,
		ymin=10^-6, ymax=10^-1,
		xtick={0.03,0.05,0.1,0.2,0.3,0.5},
		log x ticks with fixed point,
		ytick={10^-9,10^-8,10^-7,10^-6,10^-5,10^-4,10^-3,10^-2,10^-1},
		legend pos=south east,
		legend columns = 3,
		xmajorgrids=true,
		ymajorgrids=true,
		grid style=dashed,
		]
		\addplot
		coordinates {
        (0.25		,2.09E-02)
        (0.125		,5.53E-03)
        (0.083333333,2.49E-03)
        (0.0625		,1.40E-03)
        (0.05		,9.01E-04)
        (0.041666667,6.26E-04)
        (0.035714286,4.60E-04)
		};
		\addlegendentry{p=3},
		\addplot
		coordinates {
		(0.25		,0.001754921)
		(0.125		,0.000439081)
		(0.083333333,0.000195145)
		(0.0625		,0.000109768)
		(0.05		,7.02494E-05)
		(0.041666667,4.87813E-05)
		(0.035714286,3.58335E-05)
        };
		\addlegendentry{p=4},
		\addplot[color=darkgray,mark=triangle*]
		coordinates {
		(0.25		,0.000638428)
		(0.125		,0.000159483)
		(0.083333333,7.08814E-05)
		(0.0625		,3.98724E-05)
		(0.05		,2.55068E-05)
		(0.041666667,1.77162E-05)
		(0.035714286,1.30192E-05)
};
		\addlegendentry{p=5},
	\end{loglogaxis}
\end{tikzpicture}
\caption*{\footnotesize{(b) $|1-u/u_{ref}|$ - stabilized }}
\end{minipage}
    \caption{ \small Convergence studies of the unstabilized and stabilized results on the example of the point load on a square plate and orders of $p = 3$, $p = 4$ and $p = 5$ $(r=1)$.}
   \label{fig:StabilityEx3_add}
\end{figure}

These first experiments with modified spaces  reveal the importance of a good mesh choice, at least when dealing with high order problems. For sure, different refinement strategies should be discussed, too,  maybe also in combination with some adaptive scheme. And the application of the naive two mesh ansatz in the context of more complicated test cases is advisable. But, it is not our goal of this article to  study the stability issue in detail. This seems a reasonable object of investigations with another publication.\\
We want to conclude this subsection with the following summarizing words.
The SB-IGA space with $C^1$ coupled basis functions suffers from bad conditions numbers, especially if we look at high order problems combined with fine meshes. 
Nevertheless, there is hope to alleviate this problem. We think that removing basis functions near the scaling center leads to increased stability in the computations without reducing the overall convergence behaviour significantly.

\section{Conclusion}
\label{section:conclusion}
In this contribution, an approach of the incorporation of SB-IGA patches and $C^1$ coupling in terms of analysis suitable $G^1$ parametrization was presented. This implies a special consideration of the basis functions at the scaling center. The method was tested in the context of the Kirchhoff plate formulation. It is especially suitable for trimmed models since the boundary representation can easily replace the existing boundary by the trimming curve, no matter if the trimming curve is entirely inside the domain or partially outside. The proposed approach was tested in various ways such as the $L^2$ norm and the $H^2$ seminorm against the optimal convergence rates, the bending moments and the deflection at specific points on the accuracy per degree. The stability issues arising at the scaling center for fine meshes are discussed and a stabilization remedy is presented that yields improved results. Moreover, the results were compared to other approaches considering meshing for unfavorably shaped problems and to results from other coupling approaches in isogeometric analysis.  \\
In conclusion, we presented the feasibility of SB-IGA with $C^1$ coupling in terms of Kirchhoff plates that is especially powerful for sophisticated structures.

\paragraph{Acknowledgments}
The financial support of the DFG (German Research Foundation) under Grant No. KL1345/10-2 (project number: 667493) and Grant No. SI756/5-2 (project number: 667494) is gratefully acknowledged.

\bibliographystyle{IEEEtran}
\bibliography{manuscript}

\begin{thebibliography}{10}
\providecommand{\url}[1]{#1}
\csname url@samestyle\endcsname
\providecommand{\newblock}{\relax}
\providecommand{\bibinfo}[2]{#2}
\providecommand{\BIBentrySTDinterwordspacing}{\spaceskip=0pt\relax}
\providecommand{\BIBentryALTinterwordstretchfactor}{4}
\providecommand{\BIBentryALTinterwordspacing}{\spaceskip=\fontdimen2\font plus
\BIBentryALTinterwordstretchfactor\fontdimen3\font minus
  \fontdimen4\font\relax}
\providecommand{\BIBforeignlanguage}[2]{{%
\expandafter\ifx\csname l@#1\endcsname\relax
\typeout{** WARNING: IEEEtran.bst: No hyphenation pattern has been}%
\typeout{** loaded for the language `#1'. Using the pattern for}%
\typeout{** the default language instead.}%
\else
\language=\csname l@#1\endcsname
\fi
#2}}
\providecommand{\BIBdecl}{\relax}
\BIBdecl

\bibitem{IGA2}
T.~J.~R. Hughes, J.~Cottrell, and Y.~Bazilevs, ``{Isogeometric Analysis: CAD,
  finite elements, NURBS, exact geometry and mesh refinement},'' \emph{Computer
  Methods in Applied Mechanics and Engineering}, vol. 194, pp. 4135--4195, 10
  2005.

\bibitem{IGA1}
A.~Buffa and G.~Sangalli, \emph{Iso{G}eometric {A}nalysis: {A} {N}ew {P}aradigm
  in the {N}umerical {A}pproximation of {PDE}s: {C}etraro, {I}taly 2012}.\hskip
  1em plus 0.5em minus 0.4em\relax Springer, 2016, vol. 2161.

\bibitem{IGA3}
Y.~Bazilevs, L.~Veiga, J.~Cottrell, T.~Hughes, and G.~Sangalli,
  ``{I}sogeometric {A}nalysis: {A}pproximation, stability and error estimates
  for h-refined meshes,'' \emph{Mathematical Models and Methods in Applied
  Sciences}, vol.~16, no.~07, pp. 1031--1090, 2006.

\bibitem{Collin2016AnalysissuitableGM}
A.~Collin, G.~Sangalli, and T.~Takacs, ``Analysis-suitable {G}$^1$ multi-patch
  parametrizations for {C}$^1$ isogeometric spaces,'' \emph{Computer Aided
  Geometric Design}, vol.~47, pp. 93--113, 2016.

\bibitem{Dittmann2019}
M.~Dittmann, S.~Schuß, B.~Wohlmuth, and C.~Hesch, ``Weak {C}$^n$ coupling for
  multipatch isogeometric analysis in solid mechanics,'' \emph{International
  Journal for Numerical Methods in Engineering}, vol. 118, pp. 678--699, 6
  2019.

\bibitem{Dornisch2015}
W.~Dornisch, G.~Vitucci, and S.~Klinkel, ``The weak substitution method--an
  application of the mortar method for patch coupling in {NURBS}-based
  isogeometric analysis,'' \emph{International Journal for Numerical Methods in
  Engineering}, vol. 103, no.~3, pp. 205--234, 2015.

\bibitem{Chasapi2020}
M.~Chasapi, W.~Dornisch, and S.~Klinkel, ``Patch coupling in isogeometric
  analysis of solids in boundary representation using a mortar approach,''
  \emph{International Journal for Numerical Methods in Engineering}, vol. 121,
  no.~14, pp. 3206--3226, 2020.

\bibitem{toshniwal2017}
D.~Toshniwal, H.~Speleers, R.~R. Hiemstra, and T.~J.~R. Hughes, ``Multi-degree
  smooth polar splines: {A} framework for geometric modeling and isogeometric
  analysis,'' \emph{Computer Methods in Applied Mechanics and Engineering},
  vol. 316, pp. 1005--1061, 2017.

\bibitem{Arioli2019}
C.~Arioli, A.~Shamanskiy, S.~Klinkel, and B.~Simeon, ``Scaled boundary
  parametrizations in isogeometric analysis,'' \emph{Computer Methods in
  Applied Mechanics and Engineering}, vol. 349, pp. 576--594, 2019.

\bibitem{Klinkel2019}
S.~Klinkel and R.~Reichel, ``A finite element formulation in boundary
  representation for the analysis of nonlinear problems in solid mechanics,''
  \emph{Computer Methods in Applied Mechanics and Engineering}, vol. 347, pp.
  295--315, 2019.

\bibitem{Chasapi2018}
M.~Chasapi and S.~Klinkel, ``A scaled boundary isogeometric formulation for the
  elasto-plastic analysis of solids in boundary representation,''
  \emph{Computer Methods in Applied Mechanics and Engineering}, vol. 333, pp.
  475--496, 2018.

\bibitem{liu2018}
N.~Liu and A.~E. Jeffers, ``A geometrically exact isogeometric {K}irchhoff
  plate: {F}eature-preserving automatic meshing and {C}$^1$ rational triangular
  {B}{\'e}zier spline discretizations,'' \emph{International Journal for
  Numerical Methods in Engineering}, vol. 115, no.~3, pp. 395--409, 2018.

\bibitem{Dieringer2011}
R.~Dieringer, J.~Hebel, and W.~Becker, ``The scaled boundary finite element
  method for plate bending problems,'' \emph{Computer Methods in Mechanics},
  2011.

\bibitem{Marussig2018}
B.~Marussig and T.~J.~R. Hughes, ``A {R}eview of {T}rimming in {I}sogeometric
  {A}nalysis: {C}hallenges, {D}ata {E}xchange and {S}imulation {A}spects,''
  \emph{Archives of Computational Methods in Engineering}, vol.~25, no.~4, pp.
  1059--1127, 2018.

\bibitem{MATLAB:2022}
MATLAB, \emph{version 9.13.0.2049777 (R2022b)}.\hskip 1em plus 0.5em minus
  0.4em\relax Natick, Massachusetts: The MathWorks Inc., 2022.

\bibitem{geopdesv3}
R.~V\'azquez, ``A new design for the implementation of isogeometric analysis in
  {O}ctave and {M}atlab: {G}eo{PDE}s 3.0.''

\bibitem{Reali2015}
A.~Reali and H.~Gomez, ``An isogeometric collocation approach for
  {B}ernoulli--{E}uler beams and {K}irchhoff plates,'' \emph{Computer Methods
  in Applied Mechanics and Engineering}, vol. 284, pp. 623--636, 2015.

\bibitem{Ciarlet2002}
P.~G. Ciarlet, \emph{The finite element method for elliptic problems}, 2002.

\bibitem{reddy2006}
J.~N. Reddy, \emph{Theory and {A}nalysis of {E}lastic {P}lates and
  {S}hells}.\hskip 1em plus 0.5em minus 0.4em\relax CRC press, 2006.

\bibitem{Coradello2021}
L.~Coradello, G.~Loli, and A.~Buffa, ``A projected super-penalty method for the
  $c^{1}$-coupling of multi-patch isogeometric {K}irchhoff plates,''
  \emph{Computational Mechanics}, vol.~67, no.~4, pp. 1133--1153, 2021.

\bibitem{Benzaken2017}
J.~Benzaken, A.~J. Herrema, M.-C. Hsu, and J.~Evans, ``A rapid and efficient
  isogeometric design space exploration framework with application to
  structural mechanics,'' \emph{Computer Methods in Applied Mechanics and
  Engineering}, vol. 316, pp. 1215--1256, 2017.

\bibitem{Coradello2020hierarchically}
L.~Coradello, D.~D’Angella, M.~Carraturo, J.~Kiendl, S.~Kollmannsberger,
  E.~Rank, and A.~Reali, ``Hierarchically refined isogeometric analysis of
  trimmed shells,'' \emph{Computational Mechanics}, vol.~66, no.~2, pp.
  431--447, 2020.

\bibitem{takacs2012}
T.~Takacs and B.~J{\"u}ttler, ``${H^2}$ regularity properties of singular
  parameterizations in isogeometric analysis,'' \emph{Graphical {M}odels},
  vol.~74, no.~6, pp. 361--372, 2012.

\end{thebibliography}

\end{document}